\theoremstyle{definition}
\newtheorem{theorem}{Theorem}[section]
\newtheorem{corollary}[theorem]{Corollary}
\newtheorem{lemma}[theorem]{Lemma}
\newtheorem{convention}[theorem]{Convention}
\newtheorem{case}{Case}
\newtheorem{definition}[theorem]{Definition}
\newtheorem{remark}[theorem]{Remark}
\newtheorem{note}[theorem]{Note}
\newtheorem{notation}[theorem]{Notation}
\title{Sieve Methods in Random Graph Theory}
\author[Y. R. Liu]{Yu-Ru Liu}
\address{Department of Pure Mathematics\\ University of Waterloo\\ 200 University Avenue West\\ Waterloo, Ontario\\ Canada\\ N2L 3G1}
\email{yrliu@uwaterloo.ca}
\author[J. C. Saunders]{J.C. Saunders}
\address{Department of Mathematics \& Statistics\\ University of Calgary\\ 2500 University Drive NW\\ Calgary, Alberta\\ T2N 1N4\\ Canada}
\email{john.saunders1@ucalgary.ca}
\date{}
\begin{document}

\begin{abstract}
In this paper, we apply the Tur\'an sieve and the simple sieve developed 
by R. Murty and the first author to study problems in random graph
theory. In particular, we obtain upper and lower bounds on the probability of a graph on $n$ vertices having diameter $d$ for some $d\geq 2$ with edge probability $p$ where the edges are chosen independently. An interesting feature revealed in these results is that the Tur\'an sieve
and the simple sieve ``almost completely'' complement each other. As a corollary to our result, we note that the probability of a random graph having diameter $2$ approaches $1$ as $n\rightarrow\infty$ for constant edge probability $p=1/2$. This is an appendix of a shorter version of this paper.
\end{abstract}

\keywords{random graph theory, probabilistic calculations, sieve theory, probabilistic combinatorics}

\maketitle

\section{Introduction}
For the purpose of analyzing the random graphs in this paper, we first introduce two sieves known as the simple sieve and the Turan sieve, which were introduced in \cite{comb}. These sieves can be described in terms of a bipartite graph. Let $X$ be a bipartite graph with finite partite sets $A$ and $B$. For $a \in A$ and $b \in B$, we denote by $a\sim b$ if there is an
edge that joins $a$ and $b$.  Define 
\[ \deg b =\#\{a\in A:a\sim b\} \quad \textup{and} \quad  \omega(a)=\#\{b\in B:a\sim b\}.\]
For $b_1,b_2\in B$, we define 
\[ n(b_1,b_2)=\#\{a\in A:a\sim b_1,a\sim b_2\}.\]
In \cite{comb}, R. Murty and the first author derived an elementary
sieve method, called \textit{the simple sieve}, which states that  
\[ \#\{a\in A:\omega(a)=0\}\geq |A|-\sum_{b\in B}\deg b.\]
In the same paper, they also adopted Tur\'an's proof about 
the normal order of distinct
prime factors of a natural number \cite{turan} to prove that
\[ \#\{a\in A:\omega(a)=0\}\leq|A|^2\cdot\frac{\sum_{b_1,b_2\in B}n(b_1,b_2)}{(\sum_{b\in B}\deg b)^2}-|A|.\]
The above result is called \textit{the Tur\'an sieve}.

In this paper, we apply
both the simple sieve and the Tur\'an sieve to study problems about
random graph theory. First, we need the following definition.
\begin{definition}
The \emph{diameter} of a graph $G$ is defined as the maximum number of edges in $G$ that are needed to traverse from one vertex to another in $G$ where we exclude paths that backtrack, detour, and loop.
\end{definition}
Let $G(n,p)$ denote the set of all simple graphs on $n$ vertices where each edge is chosen independently with probability $p$. In 1981, Bollob\'{a}s \cite{bollobas} obtained sharp asymptotic results for the probability of a random graph from $G(n,p)$ vertices having diameter $d$ for any fixed $d\geq 2$ with $n\rightarrow\infty$. Here we extend his results and obtain concrete upper and lower bounds on the probability of a random graph from $G(n,p)$ having diameter at most $d$ where $n$, $p$, and $d$ are fixed. The results of Bollob\'{a}s's follow if we let $n\rightarrow\infty$. We also study analogous questions for random $k$-partite graphs having diameter $d$ with $k\geq 2$. Although our approaches work for general diameter $d$, to better illustrate the methods, Sections $2$, $3$, and $4$ will be dedicated to stating and proving our results for diameter $2$ or diameter $3$ in the case of random bipartite graphs. The rest of the sections will then be devoted to proving generalised results for any $d\geq 2$. In those later sections, for the three types of graphs we consider (graphs in general, $k$-partite graphs for any fixed $k\geq 3$, bipartite graphs) we first impose some restrictions on the values of $n$ and $p$ and then for clarity impose further restrictions on the values of $n$ and $p$ to make our results more meaningful. Here is one of the main theorems of the paper.
\begin{theorem}
\label{theorem:3}
Let $G(n,p)$ denote the set of all simple graphs on $n$ vertices where each edge is chosen independently with probability $p$. Also, let $P(G(n,p))$ be the probability of a graph from $G(n,p)$ having diameter $2$. Then
\begin{equation*}
P(G(n,p))\geq 1-\frac{n^2(1-p^2)^{n-2}(1-p)}{2}
\end{equation*}
and
\begin{equation*}
P(G(n,p))\leq\frac{2}{(n-1)^2(1-p^2)^n(1-p)}+\frac{8}{n}\left(1+\frac{p^3}{(1-p)^2}\right)^n.
\end{equation*}
\end{theorem}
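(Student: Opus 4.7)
The plan is to cast both directions in the framework of the introduction by taking $A=G^{(n)}$ (with its product measure), $B=\binom{V}{2}$ the set of unordered pairs of vertices, and $a\sim b$ iff the pair $b=\{u,v\}$ has graph distance greater than $2$ in $a$, i.e.\ $uv\notin E(a)$ and $u,v$ share no common neighbour. Then $\omega(a)=0$ exactly when $a$ has diameter at most $2$, which agrees with having diameter exactly $2$ except on the event that $a$ is complete (probability $p^{\binom{n}{2}}$, absorbed into the bounds). In their probabilistic (weighted) forms the two sieves read $P(G^{(n)},p)\ge 1-\sum_{b\in B}\Pr[a\sim b]$ and $P(G^{(n)},p)\le \sum_{b_1,b_2\in B}\Pr[a\sim b_1,\,a\sim b_2]/(\sum_{b\in B}\Pr[a\sim b])^{2}-1$, the latter being an instance of Chebyshev's inequality applied to $\omega$.

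Edge-independence immediately gives $\Pr[a\sim b]=(1-p)(1-p^{2})^{n-2}$, the first factor forcing $uv$ to be absent and the second forcing, for each of the $n-2$ other vertices $w$, at least one of $uw,vw$ to be absent. Thus $D:=\sum_{b}\Pr[a\sim b]=\binom{n}{2}(1-p)(1-p^{2})^{n-2}$, and the lower bound of the theorem follows from the simple sieve by $\binom{n}{2}\le n^{2}/2$. For the upper bound I split $S:=\sum_{b_1,b_2}\Pr[a\sim b_1,\,a\sim b_2]$ according to $|b_1\cap b_2|\in\{2,1,0\}$. The diagonal contributes $D$, giving a $1/D$ summand in $S/D^{2}$ which rearranges to the first term of the upper bound via the elementary inequality $n\ge(n-1)(1-p^{2})^{2}$. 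For the overlap-$1$ case ($b_1=\{u,v\}$, $b_2=\{u,w\}$) the shared edge $ux$ couples the two ``no common neighbour'' constraints at each outside vertex $x\notin\{u,v,w\}$, yielding per-vertex probability $1-p(2p-p^{2})=1-2p^{2}+p^{3}$ and therefore
\begin{equation*}
\Pr[a\sim b_1,\,a\sim b_2]=(1-p)^{2}(1-2p^{2}+p^{3})^{n-3}.
\end{equation*}
For the disjoint case a short case-count on the $2\times 2$ array of cross-edges $u_iv_j$ (no full row, no full column) gives
\begin{equation*}
\Pr[a\sim b_1,\,a\sim b_2]=(1-p)^{4}(1-p^{2})^{2(n-4)}(1+2p-p^{2}).
\end{equation*}

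The final step is to show that the overlap-$1$ and disjoint contributions to $S/D^{2}-1$ jointly fit under $\frac{8}{n}(1+p^{3}/(1-p)^{2})^{n}$. The key identity for overlap $1$ is
\begin{equation*}
\frac{1-2p^{2}+p^{3}}{(1-p^{2})^{2}}=1+\frac{p^{3}}{(1-p)(1+p)^{2}}\ \le\ 1+\frac{p^{3}}{(1-p)^{2}},
\end{equation*}
converting that contribution into a multiple of $\frac{1}{n}(1+p^{3}/(1-p)^{2})^{n}$ after using $4(n-2)/[n(n-1)]\le 4/n$ and absorbing the residual $1/(1-p^{2})^{2}$ into a bounded function of $p$. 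For the disjoint case, the factorisation $(1-p)^{2}(1+2p-p^{2})-(1-p^{2})^{4}=p^{3}(1-p)^{2}(4+p-2p^{2}-p^{3})$ exposes a $p^{3}$ factor in the disjoint contribution to $S/D^{2}-1$ (which would vanish under full independence), permitting the same envelope. Combining the three pieces with explicit numerical constants yields the coefficient $8/n$. I expect the main obstacle to be the disjoint case: the bookkeeping for the four shared cross-edges and the uniform bound on the resulting expression over $p\in[0,1]$ and $n$ are the most delicate steps.
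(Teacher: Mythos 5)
Your setup is the same as the paper's: $A$ the space of graphs, $B$ the vertex pairs, $a\sim b$ when the pair $b$ is at distance greater than $2$, the simple sieve for the lower bound and the Tur\'an sieve (in second-moment form) for the upper bound, with the double sum split according to $|b_1\cap b_2|\in\{2,1,0\}$; working directly with the product measure instead of the paper's rational-$p$ duplication of graphs is only a cosmetic difference. Your case probabilities are correct: the disjoint-pair value $(1-p)^4(1-p^2)^{2(n-4)}(1+2p-p^2)$, the factorization exposing $p^3$, the identity $\frac{1-2p^2+p^3}{(1-p^2)^2}=1+\frac{p^3}{(1-p)(1+p)^2}$, and the diagonal conversion via $n\ge(n-1)(1-p^2)^2$ all check out; indeed your overlap-$1$ value $(1-p)^2(1-2p^2+p^3)^{n-3}$ is the exact joint probability (the common-neighbour constraints at the two non-shared vertices are implied by the two missing edges), which is larger by a factor $(1-p^2)^{-2}$ than the corresponding expression displayed in the paper's Case 2 -- so your version genuinely has to deal with that residual factor, where the paper's does not.

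That residual is precisely where your write-up has a gap: you propose to absorb $1/(1-p^2)^2$ ``into a bounded function of $p$'', but it is unbounded as $p\to1$, and even a fixed constant $C>1$ would not obviously fit under the coefficient $8/n$. The factor has to be traded against the envelope itself. One workable route along your lines: Bernoulli gives $\frac4n\bigl(1+\tfrac{p^3}{(1-p)^2}\bigr)^n\ge\frac4n+\frac{4p^3}{(1-p)^2}$, so after the disjoint case consumes $\frac{4p^3}{(1-p)^2}$ there is a spare $\frac4n$, and it then suffices to prove
\begin{equation*}
\frac{n-2}{(n-1)(1-p^2)^2}\left(1+\frac{p^3}{(1-p)(1+p)^2}\right)^{n-3}\le\left(1+\frac{p^3}{(1-p)^2}\right)^{n}+1,
\end{equation*}
which holds because the base on the right strictly exceeds the base on the left (so the right side wins exponentially in $n$) while the $+1$ covers small and moderate $n$; but this comparison does require an explicit argument, and note that $1/(1-p^2)^2\le\bigl(1+\tfrac{p^3}{(1-p)^2}\bigr)^3$ alone is false (e.g.\ near $p=0.3$). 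Relatedly, you have the difficulty backwards: your own factorization disposes of the disjoint case immediately (it yields exactly the $4p^3/(1-p)^2$ term), and the delicate step is the overlap-$1$ case just described.
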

\begin{corollary}
\label{cor:r1}
Let $P(G(n,p))$ be defined as in Theorem \ref{theorem:3}. If $p=\frac{1}{2}$, then we have
\begin{equation*}
P(G(n,1/2))\geq 1-\frac{4n^2(3/4)^n}{9}.
\end{equation*}
\end{corollary}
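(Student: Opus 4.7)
The plan is simply to specialize the lower bound from Theorem \ref{theorem:3} to the case $p(n)=1/2$ and simplify the resulting expression. Since the corollary only concerns the lower bound (not the upper bound), no additional combinatorial or probabilistic input is needed beyond what Theorem \ref{theorem:3} already provides.

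First I would record the three numerical identities that drive the computation: with $p(n)=1/2$, we have $1-p(n)^2 = 3/4$, $1-p(n)=1/2$, and $(1-p(n)^2)^{n-2} = (3/4)^{n-2}$. Substituting these into
\[
P(G^{(n)},p(n)) \geq 1-\frac{n^2(1-p(n)^2)^{n-2}(1-p(n))}{2}
\]
yields
\[
P(G^{(n)},1/2) \geq 1-\frac{n^2(3/4)^{n-2}\cdot(1/2)}{2} = 1-\frac{n^2(3/4)^{n-2}}{4}.
\]

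The last step is the only place where a small manipulation is required: rewrite $(3/4)^{n-2}=(4/3)^2\cdot(3/4)^n = \tfrac{16}{9}(3/4)^n$, so that
\[
1-\frac{n^2(3/4)^{n-2}}{4} \;=\; 1-\frac{16\,n^2(3/4)^n}{36} \;=\; 1-\frac{4n^2(3/4)^n}{9},
\]
which is the claimed bound.

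There is no real obstacle here; this is a one-line specialization of Theorem \ref{theorem:3}. The only thing to watch is the bookkeeping of the factor $(4/3)^2 = 16/9$ when shifting the exponent from $n-2$ to $n$, so that the numerical constant $4/9$ in the statement of the corollary comes out correctly.
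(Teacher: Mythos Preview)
Your proposal is correct and is exactly the intended argument: the corollary is an immediate specialization of the lower bound in Theorem~\ref{theorem:3}, and the paper does not give any separate proof beyond stating it. Your bookkeeping of the constant via $(3/4)^{n-2}=\tfrac{16}{9}(3/4)^n$ is the only thing to check, and it is right.
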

In the case $p=\frac{1}{2}$, Gilbert \cite{gilbert} showed that `almost all' graphs are connected. Since a graph with diameter $2$ is connected, the above result provides an explicit bound for Gilbert's result.\par
In the situation where the edge probability $p\rightarrow 0$ as $n\rightarrow\infty$, we will show the following corollary.
\begin{corollary}
\label{cor:r2}
Let $P(G(n,p))$ be defined as in Theorem \ref{theorem:3}. Let $\lim_{n\rightarrow\infty}p=0$. We have
\begin{equation}
P(G(n,p))\geq 1-(1+o(1))\frac{n^2}{2}e^{-np^2}\label{prop121}
\end{equation}
and
\begin{equation}
P(G(n,p))\leq(1+o(1))\left(\frac{2}{n^2}e^{np^2}\right)\left(1+4ne^{np^2\left(p^2-1\right)}\right).\label{prop122}
\end{equation}
Suppose further that
\begin{equation*}
\lim_{n\rightarrow\infty}(2\log n-np^2-\log 2)=c
\end{equation*}
for some $c\in\mathbb{R}\backslash\{0\}$.
\newline
1) If $c>0$, we have
\begin{equation*}
P(G(n,p))\leq(1+o(1))e^{-c}.
\end{equation*}
2) If $c<0$, we have
\begin{equation*}
P(G(n,p))\geq(1+o(1))(1-e^c).
\end{equation*}
\end{corollary}
We will also study analogous problems for random directed graphs in the appropriate sections of this paper. As we noted in Corollary \ref{cor:r2}, the upper bound we obtained through the Tu\'an sieve works effectively for $c>0$, while the lower bound we obtained through the simple sieve gives a non-trivial result for $c<0$. It is interesting to see that the Tur\'an sieve and the simple sieve ``almost completely" complement each other in this way.

\section{Graphs with Diameter 2 with the Sieves}
In this section, we use the Tur\'an sieve and the simple sieve to prove Theorem \ref{theorem:3}.
\begin{proof}
For a fixed $n\in\mathbb{N}$, let $G(n,p)$ denote the set of all graphs on $n$ vertices with edge probability $p$, and let $P(G(n,p))$ be the probability of a graph from $G(n,p)$ having diameter $2$. Consider the function $g_n:[0,1]\rightarrow[0,1]$ defined as $g_n(x):=P(G(n,p),x)$. There are $2^{\frac{n(n-1)}{2}}$ graphs in total in $G(n,p)$. Let us say $M$ of these have diameter $2$ and label these as $G_1$, $G_2$,\ldots,$G_M$. For $1\leq i\leq M$, let $k_i$ denote the number of edges in $G_i$. Then the probability of selecting the graph $G_i$ from $G(n,p)$ according to the edge probability $x$ is $x^{k_i}(1-x)^{\frac{n(n-1)}{2}-k_i}$. Therefore,
\begin{equation*}
g_n(x)=x^{k_1}(1-x)^{\frac{n(n-1)}{2}-k_1}+x^{k_2}(1-x)^{\frac{n(n-1)}{2}-k_2}+\cdots+x^{k_M}(1-x)^{\frac{n(n-1)}{2}-k_M}.
\end{equation*}
Thus, for each $n\in\mathbb{N}$ the function $g_n$ is continuous. Therefore, we may assume that $p\in\mathbb{Q}\cap(0,1)$ since $\mathbb{Q}\cap(0,1)$ is dense in $[0,1]$.

Let $p=\frac{r}{s}$ where $r=r(n), s=s(n)\in\mathbb{N}$. We let $A$ be the set of all graphs in $G(n,p)$, allowing for a number of duplicates of each possible graph to accommodate the edge probability $p$. We accomplish this by letting there be $r^{\binom{n}{2}}$ copies of the complete graph, $r^{\binom{n}{2}}\left(\frac{s}{r}-1\right)$ copies of each graph with $\binom{n}{2}-1$ edges, $r^{\binom{n}{2}}\left(\frac{s}{r}-1\right)^2$ copies of each graph with $\binom{n}{2}-2$ edges, and so on. By the binomial theorem we have
\begin{equation*}
|A|=\sum_{k=0}^{\binom{n}{2}}{\binom{\binom{n}{2}}{k}}r^k(s-r)^{\binom{n}{2}-k}=s^{\binom{n}{2}}.
\end{equation*}
We let $B$ be all pairs of vertices so $|B|=\binom{n}{2}$. For a graph $a\in A$ and a pair of vertices $b\in B$, we say $a\sim b$ if the pair of vertices $b$ in $a$ do not share a common neighbouring vertex and are not neighbours themselves. Thus, we will have $\omega(a)=0$ if and only if $a$ is connected with diameter at most $2$.

Pick a pair of vertices $b\in B$ and call them $v_1$ and $v_2$. To calculate $\deg b$, we need to calculate the number of graphs in $A$ such that the pair of vertices do not have a common neighbouring vertex and are not neighbours themselves. For each of the potential $(n-2)$ neighbouring vertices, we need to consider two edges, making sure at least one of them is not in the graph. Since each potential edge contributes a factor of $r$ or $(s-r)$ depending on whether it is in a specified graph, we have
\begin{align*}
D(r,s,n):=\deg b&=((s-r)^2+2r(s-r))^{n-2}(s-r)(s^{\binom{n}{2}-2(n-2)-1})\\
&=(s^2-r^2)^{n-2}(s-r)s^{\binom{n}{2}-2(n-2)-1}.
\end{align*}
It follows that
\begin{equation*}
\sum_{b\in B}\deg b=\frac{s^{\binom{n}{2}}n(n-1)(1-p^2)^{n-2}(1-p)}{2}.
\end{equation*}
By the simple sieve, we obtain
\begin{align}
P(G(n,p))&\geq 1-\frac{n(n-1)(1-p^2)^{n-2}(1-p)}{2}\nonumber\\
&>1-\frac{n^2(1-p^2)^{n-2}(1-p)}{2}.\label{simplesieve}
\end{align}

We now try to get an upper bound for $P(G(n,p))$, in which we need to estimate $\sum_{b_1,b_2\in B}n(b_1,b_2)$. In the following, we calculate $n(b_1,b_2)$, depending on how many vertices $b_1$ and $b_2$ have in common.
\begin{case}
\normalfont
Suppose that $b_1$ and $b_2$ are two pairs of vertices that have no vertices in common, i.e., $b_1$ and $b_2$ consist of $4$ distinct vertices. For each of $b_1$ and $b_2$, the probability that the pair of vertices in question are not connected by an edge nor have any common neighbouring vertices is
\begin{equation*}
\frac{D(r,s,n)}{s^{\binom{n}{2}}}.
\end{equation*}
As is the case for calculating $\deg b$, for each of the pair of vertices $b_1$ and $b_2$, we need to consider pairs of edges for each potential neighbouring vertex. If the potential neighbouring vertex is among the remaining $n-4$ vertices, then the pair of edges to consider with respect to $b_1$ will be disjoint from the pair of edges to consider with respect to $b_2$. The only real problem to consider is when the potential neighbouring vertex is among the pair of vertices $b_1$ and $b_2$ where we have four possible edges to consider. These observations give rise to
\begin{align*}
n(b_1,b_2)&=\frac{D(r,s,n)^2}{s^{\binom{n}{2}}}\cdot\frac{s^4((s-r)^4+4r(s-r)^3+2r^2(s-r)^2)}{(s^2-r^2)^4},
\end{align*}
and thus
\begin{align*}
&\quad\sum_{b_1,b_2\in B,\text{ 4 vertices}}n(b_1,b_2)\\
&<{\binom{n}{2}}^2\frac{D(r,s,n)^2}{s^{\binom{n}{2}}}\\
&\qquad\cdot\frac{p^{-4}((p^{-1}-1)^4+4(p^{-1}-1)^3+2(p^{-1}-1)^2)}{(p^{-2}-1)^4}\\
&<{\binom{n}{2}}^2\frac{D(r,s,n)^2}{s^{\binom{n}{2}}}\cdot\left(1+\frac{4p^3}{(1-p)^2}\right).
\end{align*}
\end{case}
\begin{case}
\normalfont
Take two pairs of vertices $b_1$ and $b_2$ that have exactly one vertex in common, i.e., $b_1$ and $b_2$ consist of $3$ distinct vertices. We can do a similar kind of analysis of edge selection as in Case $1$ to calculate
\begin{align*}
&\quad\sum_{b_1,b_2\in B,\text{ 3 vertices}}n(b_1,b_2)\\
&=\frac{D(r,s,n)^2n(n-1)(n-2)}{s^{\binom{n}{2}}}\left(1+\frac{1}{p^{-3}+p^{-2}-p^{-1}-1}\right)^{n-3}\\
&\leq\frac{D(r,s,n)^2n(n-1)(n-2)}{s^{\binom{n}{2}}}\left(1+\frac{p^3}{(1-p)}\right)^{n-3}.
\end{align*}
\end{case}
\begin{case}
\normalfont
Suppose $b_1$ and $b_2$ have two vertices in common. Then the two pairs are identical, and we have
\begin{equation*}
n(b_1,b_2)=\deg b.
\end{equation*}
It follows that
\begin{align*}
\sum_{b_1,b_2\in B,\text{ 2 vertices}}n(b_1,b_2)&=\sum_{b\in B}\deg b=\frac{s^{\binom{n}{2}}n(n-1)(1-p^2)^{n-2}(1-p)}{2}.
\end{align*}
\end{case}
\noindent Combining Cases $1-3$, we get
\begin{align*}
\sum_{b_1,b_2\in B}n(b_1,b_2)&<{\binom{n}{2}}^2\frac{D(r,s,n)^2}{s^{\binom{n}{2}}}\cdot\left(1+\frac{4p^3}{(1-p)^2}\right)\\
&\quad +\frac{D(r,s,n)^2n(n-1)(n-2)}{s^{\binom{n}{2}}}\left(1+\frac{p^3}{(1-p)}\right)^{n-3}\\
&\quad+\frac{s^{n_1n_2}n(n-1)(1-p^2)^{n-2}(1-p)}{2}.
\end{align*}
By the Tur\'an sieve, we deduce
\begin{align*}
&\quad P(G(n,p))\\
&\leq\frac{2}{n(n-1)(1-p^2)^{n-2}(1-p)}+\frac{4}{n}\left(1+\frac{p^3}{(1-p)}\right)^{n-3}\\
&\quad+\frac{4p^3}{(1-p)^2}.
\end{align*}
Notice that
\begin{equation*}
\frac{p^3}{(1-p)^2}<\frac{1}{n}\left(1+n\frac{p^3}{(1-p)^2}\right)<\frac{1}{n}\left(1+\frac{p^3}{(1-p)^2}\right)^n.
\end{equation*}
It follows that
\begin{align}
P(G(n,p)) &\leq\frac{2}{(n-1)^2(1-p^2)^n(1-p)}+\frac{8}{n}\left(1+\frac{p^3}{(1-p)^2}\right)^n\label{turansieve}.
\end{align}
By \eqref{simplesieve} and \eqref{turansieve} Theorem \ref{theorem:3} follows.
\end{proof}
We now prove Corollary \ref{cor:r2}.
\begin{proof}
By Theorem \ref{theorem:3} we have
\begin{equation*}
P(G(n,p))>1-\frac{n^2\left(1-p^2\right)^{p^{-2}\cdot np^2}\left(1-p^2\right)^{-2}(1-p)}{2}.
\end{equation*}
Since $p^{-2}\geq 1$, we have
\begin{equation}\label{estimate1}
e^{-np^2}\left(1-p^2\right)^{np^2}<\left(1-p^2\right)^{p^{-2}np^2}<e^{-np^2}.
\end{equation}
Since $\lim_{n\rightarrow\infty}p=0$, we have that
\begin{equation*}
\lim_{n\rightarrow\infty}\left(1-p^2\right)^{-2}=1\quad\text{and}\quad\lim_{n\rightarrow\infty}(1-p)=1,
\end{equation*}
from which we get
\begin{equation}
P(G(n,p))\geq 1-\frac{n^2}{2}e^{-np^2}(1+o(1))\label{eqn8}.
\end{equation}
For the upper bound, first note that
\begin{equation*}
\frac{8}{n}\left(1+\frac{p^3}{(1-p)^2}\right)^n<\frac{2e^{np^2}}{n^2}\cdot 4ne^{\left(\frac{np^3}{\left(1-p\right)^2}-np^2\right)}.
\end{equation*}
Combining this with Equations \eqref{turansieve} and \eqref{estimate1}, we get
\begin{align*}
P(G(n,p))&<\frac{2e^{np^2}}{(n-1)^2\left(1-p^2\right)^{np^2}\left(1-p\right)}\\
&\quad+\frac{2e^{np^2}}{n^2}\cdot 4ne^{\left(\frac{np^3}{\left(1-p\right)^2}-np^2\right)}.
\end{align*}
Note that for $n\in\mathbb{N}$ with $\frac{2}{n^2}e^{np^2}\geq 1$, we have
\begin{equation}
\left(\frac{2}{n^2}e^{{np}^2}\right)\left(1+4ne^{np^2\left(p-1\right)}\right)>1\label{trivial}.
\end{equation}
In particular for those $n$, the bound in Theorem \ref{theorem:3} is trivial. Thus, it suffices to consider $n\in\mathbb{N}$ such that
\begin{equation*}
\frac{2}{n^2}e^{np^2}<1.
\end{equation*}
Label all such $n\in\mathbb{N}$ as $n_1,n_2,\ldots,n_j,\ldots$ such that $n_1<n_2<\cdots$ If there are only finitely many, then for sufficiently large $n$, we will have Equation \eqref{trivial} and so the bound in Theorem \ref{theorem:3} is trivial. Thus, we may assume that $n_1,n_2,...,n_j,...$ is  an infinite list. Since $p$ depends on $n$, at least in this proof, we will sometimes denote $p$ by $p(n)$ in the rest of this proof. Then for all $j\in\mathbb{N}$, we have
\begin{equation*}
n_jp(n_j)^2<2\log n_j-\log 2,
\end{equation*}
and so
\begin{equation*}
\lim_{j\rightarrow\infty}n_jp(n_j)^3=\lim_{j\rightarrow\infty}(n_jp(n_j)^2)^{3/2}n_j^{-1/2}=0.
\end{equation*}
We also have
\begin{equation}
\lim_{j\rightarrow\infty}n_jp(n_j)^4=0\quad\text{ and }\quad\lim_{j\rightarrow\infty}p(n_j)^2=0\label{jlimit}.
\end{equation}
Note that if $0\leq x\leq 1$ and $y\geq 1$, then $(1-x)^y\geq 1-xy$. Thus, if $n_jp(n_j)^2\geq 1$, then
\begin{equation*}
\left(1-p(n_j)^2\right)^{n_jp(n_j)^2}\geq 1-n_jp(n_j)^4.
\end{equation*}
Suppose that $n_jp(n_j)^2<1$. Then we have
\begin{equation*}
\left(1-p(n_j)^2\right)^{n_jp(n_j)^2}\geq 1-p(n_j)^2.
\end{equation*}
Thus, by Equation \eqref{jlimit}, we have
\begin{equation*}
\lim_{j\rightarrow\infty}\left(1-p(n_j)^2\right)^{n_jp(n_j)^2}=1
\end{equation*}
and
\begin{equation*}
\lim_{j\rightarrow\infty}n_jp(n_j)^3\left(1-\frac{1}{\left(1-p(n_j)\right)^2}\right)=0.
\end{equation*}
Also, notice that
\begin{align*}
&\quad n_jp(n_j)^3\left(1-\frac{1}{\left(1-p(n_j)\right)^2}\right)\\
&=n_jp(n_j)^2\left(p(n_j)-1\right)-\left(\frac{n_jp(n_j)^3}{(1-p(n_j))^2}-n_jp(n_j)^2\right).
\end{align*}
We thus obtain
\begin{equation}
 P(G(n,p))\leq(1+o(1))\left(\frac{2}{n^2}e^{np^2}\right)\left(1+4ne^{np^2\left(p-1\right)}\right)\label{eqn10}.
\end{equation}
Now we suppose further that
\begin{equation}
\lim_{n\rightarrow\infty}(2\log n-np^2-\log 2)=c\label{eqn11}
\end{equation}
for some $c\in\mathbb{R}\backslash\{0\}$. Then we have
\begin{equation*}
\lim_{n\rightarrow\infty}\left(\log n-\frac{np^2}{2}\right)=\tilde{c}
\end{equation*}
for some $\tilde{c}\in\mathbb{R}$. Since $\lim_{n\rightarrow\infty}p=0$, it follows that
\begin{align*}
&\quad\lim_{n\rightarrow\infty}\left(\log n+np^3-np^2\right)\\
&=\lim_{n\rightarrow\infty}\left(\left(\log n-\frac{np^2}{2}\right)+\left(np^2-\frac{np^2}{2}\right)\right)\\
&=-\infty.
\end{align*}
Thus, we have
\begin{equation}
ne^{np^2\left(p^2-1\right)}=o(1)\label{eqn39}.
\end{equation}
Also, by Equation \eqref{eqn11}, we have 
\begin{equation}
\frac{2}{n^2}e^{np^2}=e^{-c}(1+o(1))\label{eqn57}
\end{equation}
and
\begin{equation}
\frac{n^2}{2}e^{-np^2}=e^c(1+o(1))\label{eqn104}.
\end{equation}
By Equations \eqref{eqn8} and \eqref{eqn104}, we obtain
\begin{equation*}
P(G(n,p))\geq1-(1+o(1))e^c.
\end{equation*}
Also, by Equations \eqref{eqn10}, \eqref{eqn39}, and \eqref{eqn57}, we obtain
\begin{equation*}
P(G(n,p))\leq(1+o(1))e^{-c}.
\end{equation*}
This finishes the proof of Corollary \ref{cor:r2}.
\end{proof}
\begin{remark}\label{rk1}
Assume that $n\geq 200$ and $p\leq 1/2$. The $o(1)$ in \eqref{prop121} can be made explicit as $4p^2$ and the $o(1)$ in \eqref{prop122} can be made explicit as $\frac{4(\log n)^2+2}{n}+p+\frac{3e^8(2\log n)^{3/2}}{n^{1/2}}$.
\end{remark}
\begin{remark}
Using the above methods, we can obtain similar results about the probability of a random directed graph on $n$ vertices having diameter $2$ where each directed edge is chosen independently with probability $p$. Furthermore, for any two vertices, say $v_1$ and $v_2$, the existence of the edge from $v_1$ to $v_2$ has probability $p$, while the existence of the edge from $v_2$ to $v_1$ also occurs with probability $p$, and these two edges occur independently. More precisely, in Theorem \ref{theorem:3}, Corollary \ref{cor:r1}, and Corollary \ref{cor:r2}, we multiply the second term of the lower bound by $2$, divide the upper bound by $2$, and we add $\log 2$ to our expressions for $c$. Everything else is left unchanged.
\end{remark}	
\section{Analysis of $k$-partite graphs for Diameter $2$}
Here we apply our analysis to $k$-partite graph sets for $k\geq 3$. First, we present a definition.
\begin{definition}
Let $k\geq 2$. A \textit{simple $k$-partite graph} is an undirected graph whose vertices can be divided into $k$ sets, such that there are no edges between two vertices in the same set.
\end{definition}
We exclude the bipartite case ($k=2$) because the only bipartite graph that has diameter $2$ is the complete bipartite graph; we analyze that case by itself in the next section.
\begin{convention}
\rm For each $k$-partite graph, we label the $k$ partite sets of the graph in a non-decreasing order in terms of the number of vertices each set contains. Thus, the $i$th set is a set containing $n_i$ vertices.
\end{convention}
\begin{theorem}\label{theorem:5}
Fix $k\geq 3$ and for each $n\in\mathbb{N}$, $n\geq k+2$, pick $n_1,n_2,...,n_k\in\mathbb{N}$ such that $n_1\leq n_2\leq\ldots\leq n_k$, $n_{k-1}\geq 2$, and $n_1+n_2+\cdots+n_k=n$. Let $\mathbf{n^{(k)}}=(n_1,n_2,\ldots,n_k)$ and let $G(\mathbf{n^{(k)}},p)$ denote the set of all $k$-partite graphs with the partite sets having $n_1,n_2,\ldots,n_k$ vertices respectively where each edge is chosen independently with probability $p$. Also, let $P(G(\mathbf{n^{(k)}},p))$ be the probability of a graph from $G(\mathbf{n^{(k)}},p)$ having diameter $2$. Then
\begin{align*}
&\quad P(G(\mathbf{n^{(k)}},p))\\
&\geq 1-\frac{n_k^2(1-p^2)^{n-n_k}}{2}\\
&\quad\cdot\left(1+\frac{2n_{k-1}(1-p^2)^{-n_{k-1}}}{n_k}+\frac{7k^2n_{k-1}^2(1-p^2)^{n_k-n_{k-1}-n_{k-2}}}{3n_k^2}\right)
\end{align*}
and
\begin{align*}
&\quad P(G(\mathbf{n^{(k)}},p))\\
&\leq\frac{2}{n_k(n_k-1)(1-p^2)^{n-n_k}}\left(1+\frac{2n_{k-1}(1-p^2)^{-n_{k-1}}(1-p)}{(n_k-1)}\right)^{-1}\\
&\qquad +\frac{3k^3\left(1+\frac{p^3}{(1-p)}\right)^{n-n_k}(1-p^2)^{-2}}{(n_{k-1}-1)}.
\end{align*}
\end{theorem}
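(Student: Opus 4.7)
The plan is to mirror the proof of Theorem~\ref{theorem:3}, adapting the sieve construction to respect the $k$-partite structure. First I would fix $n$, write $p(n)=r/s$ with $r,s\in\mathbb{N}$, and let $A$ be the multiset of $k$-partite graphs in $G^{(n),\mathbf{k}}$ with multiplicities chosen to encode the edge probability; by the binomial theorem $|A|=s^{E}$, where $E=\binom{n}{2}-\sum_{i=1}^{k}\binom{n_i}{2}$ is the number of admissible edges. I would let $B$ be the set of unordered pairs of vertices, and declare $a\sim b$ when the pair $b$ fails to have distance at most $2$ in $a$; then $\omega(a)=0$ exactly when $a$ has diameter at most $2$.

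Next I would compute $\deg b$ by splitting on the partite memberships of the two vertices in $b$. If both vertices lie in the same part of size $n_i$, then $b$ witnesses a diameter failure iff none of the $n-n_i$ vertices in other parts is adjacent to both, contributing $(1-p(n)^2)^{n-n_i}$ to the probability. If the vertices lie in distinct parts of sizes $n_i,n_j$, failure requires that they are non-adjacent and share no common neighbor among the $n-n_i-n_j$ vertices outside their parts, contributing $(1-p(n))(1-p(n)^2)^{n-n_i-n_j}$. Summing over $b$, the dominant term is the $\binom{n_k}{2}$ in-part pairs of the largest part, yielding $\tfrac{n_k^2}{2}(1-p(n)^2)^{n-n_k}$; the next is the $n_k n_{k-1}$ straddling pairs, which produces the middle correction in the stated lower bound; all remaining in-part and between-part contributions are bounded crudely using $n_i\leq n_{k-1}$ for $i\leq k-1$ together with a factor $O(k^2)$ for the number of part-pair types. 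The simple sieve then delivers the claimed lower bound.

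For the upper bound I would compute $n(b_1,b_2)$ by a joint case analysis on (i) the overlap $|b_1\cap b_2|\in\{0,1,2\}$, as in Cases 1--3 of Theorem~\ref{theorem:3}, and (ii) the partite memberships of the vertices involved. Independence of edges factors these probabilities cleanly, with each factor of $(1-p(n)^2)$ arising from a potential common neighbor being suppressed to $1$ whenever that neighbor sits in the same part as one of the endpoints, since such an edge is never present. The dominant contribution to $\sum_{b_1,b_2}n(b_1,b_2)$ comes from disjoint pairs $(b_1,b_2)$ both contained in the largest part, which after matching against $(\sum_b\deg b)^2$ in the Tur\'an-sieve denominator supplies the first summand of the upper bound; the between-part $(k,k-1)$ pairs generate the $(1+2n_{k-1}(1-p(n)^2)^{-n_{k-1}}(1-p(n))/(n_k-1))^{-1}$ correction factor.

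Finally I would collect the remaining cross contributions---pairs sharing one vertex and pairs touching smaller parts---into the second summand, where the factor $(1+p(n)^3/(1-p(n)))^{n-n_k}$ arises exactly as the analogous factor did in Case~2 of Theorem~\ref{theorem:3}, and the $k^3$ absorbs the count of partite-set configurations. Applying the Tur\'an sieve $\#\{a:\omega(a)=0\}\leq |A|^2\frac{\sum_{b_1,b_2}n(b_1,b_2)}{(\sum_b\deg b)^2}-|A|$, dividing by $|A|$, and simplifying then yields the stated upper bound. The main obstacle is the combinatorial bookkeeping: the partite structure creates many subcases indexed by how the up-to-four vertices in $b_1\cup b_2$ distribute among the $k$ parts, and each subcase needs a bound tight enough to keep the error terms within the claimed form without losing factors of $n_{k-1}$ or $k$.
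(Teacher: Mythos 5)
Your overall plan is the paper's: pass to rational $p(n)=r/s$, build the weighted multiset $A$ with $|A|=s^{t}$ where $t=\sum_{i<j}n_in_j$, take $B$ to be all vertex pairs, compute $\deg b=(1-p(n)^2)^{n-n_i}s^t$ for a within-part pair and $(1-p(n))(1-p(n)^2)^{n-n_i-n_j}s^t$ for a cross-part pair, get the lower bound from the simple sieve by keeping the $\binom{n_k}{2}$ term and the $n_kn_{k-1}$ straddling term and absorbing the rest into an $O(k^2)$ correction, and then run the Tur\'an sieve with a case analysis of $n(b_1,b_2)$ by overlap size and partite type. All of that matches the paper's proof of Theorem \ref{theorem:5}.

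The one genuine problem is your final accounting of the Tur\'an sieve, which as stated would fail. The disjoint pairs contribute at most $\frac{(\sum_b\deg b)^2}{|A|}\left(1+\frac{4p(n)^3}{(1-p(n))^2}\right)$; after multiplying by $\frac{|A|}{(\sum_b\deg b)^2}$ this is $1$ plus a small error, and its role is to cancel the $-1$ coming from the $-|A|$ term of the sieve, with the small error absorbed into the second summand exactly as in the step $\frac{4p^3}{(1-p)^2}<\frac{4}{n}\left(1+\frac{p^3}{(1-p)^2}\right)^n$ in Theorem \ref{theorem:3}. It cannot ``supply the first summand'' of the upper bound: if it did, that summand would be of size roughly $1$ and the bound trivial. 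The first summand, $\frac{2}{n_k(n_k-1)(1-p(n)^2)^{n-n_k}}\left(1+\frac{2n_{k-1}(1-p(n)^2)^{-n_{k-1}}(1-p(n))}{n_k-1}\right)^{-1}$, is precisely the diagonal contribution $b_1=b_2$, namely $|A|\cdot\frac{\sum_b\deg b}{(\sum_b\deg b)^2}=\frac{|A|}{\sum_b\deg b}$, and its correction factor arises from keeping both the within-largest-part term and the $(k,k-1)$ cross-part term in the lower bound for $\sum_b\deg b$, not from any numerator analysis of disjoint $(k,k-1)$ pairs. Once you reassign the terms this way (diagonal gives the first summand; the one-shared-vertex cases together with the disjoint-pair error give the second summand with the $3k^3/(n_{k-1}-1)$ factor), your bookkeeping closes and the argument is the same as the paper's.
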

\begin{proof}
As in the proof of Theorem \ref{theorem:3}, we may assume that $p\in\mathbb{Q}\cap(0,1)$ for all $n\in\mathbb{N}$.

Let $p=\frac{r}{s}$ where $r,s\in\mathbb{N}$. As in the proof of Theorem \ref{theorem:3}, we let $A$ be the set of all graphs in $G(\mathbf{n^{(k)}},p)$, allowing for a number of duplicates of each possible graph to accommodate the edge probability $p$. Since the complete $k$-partite graph has $t:=\sum_{1\leq i<j\leq k}n_in_j$ edges, we have $r^t$ copies of the complete bipartite graph and $|A|=s^t$.

We let $B$ be all pairs of vertices. Thus, $|B|=\frac{n(n-1)}{2}$. For a graph $a\in A$ and a pair of vertices $b\in B$, we say $a\sim b$ if the pair of vertices $b$ in $a$ do not share a common neighbouring vertex and are not connected by a single edge. Thus, we will have $\omega(a)=0$ if and only if $a$ is connected with diameter at most $2$. For each pair of vertices $b\in B$ that are in the $i$th partite set for some $1\leq i\leq k$, we will have
\begin{align*}
\quad D(r, s, n, n_i)&:=\deg b\\
&=((s-r)^2+2r(s-r))^{n-n_i}((s-r)+r)^{t-2n+2n_i}\\
&=(1-p^2)^{n-n_i}s^t.
\end{align*}
For each pair of vertices $b\in B$ with one vertex being in the $i$th partite set and the other in the $j$th partite set where $i<j$, we have
\begin{align*}
&\quad D(r, s, n, n_i, n_j)\\
&:=\deg b\\
&=((s-r)^2+2r(s-r))^{n-n_i-n_j}((s-r)+r)^{t-2n+2n_i+2n_j}(1-p)\\
&=(1-p^2)^{n-n_i-n_j}(1-p)s^t.
\end{align*}
It follows that
\begin{align*}
&\quad\sum_{b\in B}\deg b\\
&=s^t\sum_{i=1}^k\binom{n_i}{2}(1-p^2)^{n-n_i}+s^t\sum_{1\leq i<j\leq k}n_in_j(1-p^2)^{n-n_i-n_j}(1-p).
\end{align*}
By the simple sieve, we obtain
\begin{align*}
&\quad P(G(\mathbf{n^{(k)}},p))\\
&\geq 1-\frac{n_k^2(1-p^2)^{n-n_k}}{2}\\
&\quad\cdot\left(1+\frac{2n_{k-1}(1-p^2)^{-n_{k-1}}}{n_k}+\frac{7k^2n_{k-1}^2(1-p^2)^{n_k-n_{k-1}-n_{k-2}}}{3n_k^2}\right).
\end{align*}
To get an upper bound for $P(G(\mathbf{n^{(k)}},p))$, we need to estimate $\sum_{b_1,b_2\in B}n(b_1,b_2)$. Similar to the proof of Theorem \ref{theorem:3}, by calculating $n(b_1,b_2)$ based on the number of vertices $b_1$ and $b_2$ have in common, we can get
\begin{align*}
&\quad\sum_{b_1,b_2\in B}n(b_1,b_2)\\
&\leq\frac{\left(\sum_{b\in B}\deg b\right)^2}{s^{n_1n_2}}\left(1+\frac{4p^3}{(1-p)^2)^2}\right)\\
&\quad +\frac{n^3D(r,s,n,n_k,n_{k-1})D(r,s,n,n_k,n_{k-2})}{s^t(1-p^2)^{2}}\left(1+\frac{p^3}{(1-p)}\right)^{n-n_k-n_{k-1}-n_{k-2}}\\
&\quad +\binom{k}{2}\frac{n_k^2n_{k-1}D(r,s,n,n_k,n_{k-1})^2}{s^t}\left(1+\frac{p^3}{(1-p)}\right)^{n-n_k-n_{k-1}}\\
&\quad +\frac{k^2\binom{n_k}{2}n_{k-1}D(r,s,n,n_k)D(r,s,n,n_k,n_{k-1})}{s^t(1-p^2)}\left(1+\frac{p^3}{(1-p)}\right)^{n-n_k-n_{k-1}}\\
&\quad +\frac{kn_k^3D(r,s,n,n_k)^2}{s^t}\left(1+\frac{p^3}{(1-p)}\right)^{n-n_k}.
\end{align*}
Then, by the Tur\'an sieve, we get
\begin{align*}
P(G(\mathbf{n^{(k)}},p))&<\frac{2}{n_k(n_k-1)(1-p^2)^{n-n_k}}\\
&\qquad\cdot\left(1+\frac{2n_{k-1}(1-p^2)^{-n_{k-1}}(1-p)}{(n_k-1)}\right)^{-1}\\
&\qquad +\frac{3k^3\left(1+\frac{p^3}{(1-p)}\right)^{n-n_k}(1-p^2)^{-2}}{(n_{k-1}-1)}.
\end{align*}
This completes the proof of Theorem \ref{theorem:5}.
\end{proof}
By substituting $p=\frac{1}{2}$, we deduce from Theorem \ref{theorem:5} the following.
\begin{corollary}
Let $P(G(\mathbf{n^{(k)}},p))$ be defined as in Theorem \ref{theorem:5}. If $p=\frac{1}{2}$, then we have
\begin{align*}
&\quad P(G(\mathbf{n^{(k)}},p),1/2)\\
&\geq 1-\frac{n_k^2(3/4)^{n-n_k}}{2}\left(1+\frac{2n_{k-1}(3/4)^{-n_{k-1}}}{n_k}+\frac{7k^2n_{k-1}^2(3/4)^{n_k-n_{k-1}-n_{k-2}}}{3n_k^2}\right).
\end{align*}
\end{corollary}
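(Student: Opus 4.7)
The plan is to obtain this corollary as an immediate substitution into the lower bound already established in Theorem \ref{theorem:5}. Since the only hypothesis that changes is the specialization $p(n)=\tfrac{1}{2}$, no new combinatorial argument is needed; the whole task reduces to arithmetic simplification of the factors $(1-p(n)^2)$ that appear in the simple-sieve estimate.

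Concretely, I would begin by writing the lower bound in Theorem \ref{theorem:5} verbatim, so the reader can see the template
\[
P(G^{(n),\bold{k}},p(n))\geq 1-\frac{n_k^2(1-p(n)^2)^{n-n_k}}{2}\Bigl(1+\tfrac{2n_{k-1}(1-p(n)^2)^{-n_{k-1}}}{n_k}+\tfrac{7k^2n_{k-1}^2(1-p(n)^2)^{n_k-n_{k-1}-n_{k-2}}}{3n_k^2}\Bigr).
\]
Next I would note that $p(n)=\tfrac{1}{2}$ forces $1-p(n)^2=\tfrac{3}{4}$, so each of the three exponentials involving $1-p(n)^2$ turns into a corresponding power of $\tfrac{3}{4}$, namely $(3/4)^{n-n_k}$, $(3/4)^{-n_{k-1}}$, and $(3/4)^{n_k-n_{k-1}-n_{k-2}}$. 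Substituting these expressions back into the bound gives exactly the inequality in the corollary.

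There is no genuine obstacle here; the corollary is a display of the specialization rather than a new theorem, and accordingly the proof can be a single sentence invoking Theorem \ref{theorem:5} together with the identity $1-(1/2)^2=3/4$. The only stylistic choice is whether to reproduce the bound before substituting or to simply cite Theorem \ref{theorem:5}; I would opt for the latter to keep the exposition brisk and mirror the parallel structure of Corollary \ref{cor:r1}, which treats the unrestricted-graph case in the same manner.
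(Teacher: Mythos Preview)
Your proposal is correct and matches the paper's approach exactly: the corollary is stated immediately after Theorem \ref{theorem:5} with the remark that it follows by substituting $p(n)=\tfrac12$, and no separate proof is given. Your observation that $1-p(n)^2=\tfrac34$ is the only computation needed.
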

In the case when $p\rightarrow 0$ as $n\rightarrow\infty$, we have the following.
\begin{corollary}\label{prop:5}
Let $P(G(\mathbf{n^{(k)}},p))$ be defined as in Theorem \ref{theorem:5}. Let $\lim_{n\rightarrow\infty} p^4(n-n_k)=0$. We have
\begin{align*}
P(G(\mathbf{n^{(k)}},p))&\geq 1-\frac{n_k^2e^{-p^2(n-n_k)}}{2}\\
&\quad\cdot\left(1+\frac{2n_{k-1}}{n_k}e^{p^2n_{k-1}}\left(1+\frac{7k^2n_{k-1}e^{-p^2(n_k-n_{k-2})}}{6n_k}\right)\right)
\end{align*}
and
\begin{align}
&\quad P(G(\mathbf{n^{(k)}},p))\\
&\leq(1+o(1))\frac{2e^{p^2(n-n_k)}}{n_k^2}\left(1+\frac{2n_{k-1}}{n_k}e^{p^2n_{k-1}}\right)^{-1}\nonumber\\
&\quad\cdot\left(1+\frac{3k^3n_k^2e^{\left(p^3-p^2\right)(n-n_k)}}{2(n_{k-1}-1)}+\frac{3k^3n_kn_{k-1}e^{\left(p^3-p^2\right)(n-n_k)+p^2n_{k-1}}}{(n_{k-1}-1)}\right)\nonumber.
\end{align} 
Suppose further that
\begin{equation}
\lim_{n\rightarrow\infty}\left(\log n_{k-1}-\log n-p^2n_{k-1}\right)=-\infty,\label{limit1}
\end{equation}
\begin{equation}
\lim_{n\rightarrow\infty}\left(2\log n+(p^3-p^2)(n-n_k)-\log n_{k-1}\right)=-\infty,\label{limit2}
\end{equation}
\begin{equation}
\lim_{n\rightarrow\infty}\left(\left(p^3-p^2\right)(n-n_k)+p^2n_{k-1}+\log n\right)=-\infty,\label{limit3}
\end{equation}
and that
\begin{equation*}
\lim_{n\rightarrow\infty}\left(2\log n_k-p^2(n-n_k)-\log 2+\log\left(1+\frac{2n_{k-1}}{n_k}e^{p^2n_{k-1}}\right)\right)=c
\end{equation*}
for some $c\in\mathbb{R}$. 
\newline
1) If $c<0$, we have
\begin{equation*}
P(G(\mathbf{n^{(k)}},p))\geq 1-(1+o(1))e^c.
\end{equation*}
2) If $c>0$, we have
\begin{equation*}
P(G(\mathbf{n^{(k)}},p))\leq(1+o(1))e^{-c}.
\end{equation*}
\end{corollary}
\begin{proof}
Since $\lim_{n\rightarrow\infty}p^4(n-n_k)=0$ and $p^{-2}\geq 1$, for $n\rightarrow\infty$, by similar reasoning as in the proof of Theorem \ref{theorem:3}, we have
\begin{align*}
e^{-p^2(n-n_k)}>\left(1-p^2\right)^{p^{-2}\cdot p^2(n-n_k)}&>e^{-p^2(n-n_k)}\left(1-p^2\right)^{p^2(n-n_k)}\\
&=e^{-p^2(n-n_k)}(1+o(1))
\end{align*}
and
\begin{align*}
e^{p^2n_{k-1}}<\left(1-p^2\right)^{-p^{-2}\cdot p^2n_{k-1}}&<e^{p^2n_{k-1}}\left(1-p^2\right)^{-p^2n_{k-1}}\\
&=e^{p^2n_{k-1}}(1+o(1)).
\end{align*}
Thus the first term in the upper bound of $P(G(\mathbf{n^{(k)}},p))$ in Theorem \ref{theorem:5} becomes
\begin{align}
&\quad\frac{2}{n_k(n_k-1)\left(1-p^2\right)^{p^{-2}\cdot p^2(n-n_k)}}\\
&\quad\cdot\left(1+\frac{2n_{k-1}\left(1-p^2\right)^{\frac{-n}{np^2}\cdot p^2n_{k-1}}\left(1-p\right)}{(n_k-1)}\right)^{-1}\nonumber\\
&=(1+o(1))\frac{2e^{p^2(n-n_k)}}{n_k^2}\left(1+\frac{2n_{k-1}}{n_k}e^{p^2n_{k-1}}\right)^{-1}.\label{1st}
\end{align}
For the second term, first note that since $\lim_{n\rightarrow\infty}p^4(n-n_k)=0$, we have
\begin{align*}
\lim_{n\rightarrow\infty}(n-n_k)p^2\left(\frac{p}{\left(1-p\right)}-1\right)-(n-n_k)p^2\left(p-1\right)
=0.
\end{align*}
Thus the second term in the upper bound of $P(G(\mathbf{n^{(k)}},p))$ in Theorem \ref{theorem:5} becomes
\begin{equation}
\frac{3k^3e^{\frac{p^3(n-n_k)}{1-p}}(1-p^2)^{-2}}{n_{k-1}-1}=\frac{3k^3e^{p^3(n-n_k)}}{n_{k-1}-1}(1+o(1)).\label{2nd}
\end{equation}
Combining Equations \eqref{1st} and \eqref{2nd}, the upper bound of Corollary \ref{prop:5} follows. Also, by Equations \eqref{limit1}, \eqref{limit2}, and \eqref{limit3}, Statements (1) and (2) follow as in the proof of Corollary \ref{cor:r2}.
\end{proof}
\begin{remark}
Similar to Remark \ref{rk1}, all $o(1)$ terms in Corollary \ref{prop:5} can be made explicit.
\end{remark}
We consider one more application of the sieves to random $k$-partite graphs.
\begin{definition}
\label{definition:3}
The \textit{$k$-partite Tur\'an graph} (named after the same P\'al Tur\'an) on $n$ vertices is defined as the $k$-paritite graph on $n$ vertices such that the partitioned sets are as equal as possible. In other words, for each $1\leq i\leq k$, we have $n_i=\lfloor\frac{n}{k}\rfloor$ or $n_i=\lceil\frac{n}{k}\rceil$.
\end{definition}
In the case of $k$-partite Tur\'an graphs, we can calculate $\sum_{b\in B}\deg b$ a lot more precisely, using the above methods. Then we can prove the following.
\begin{theorem}\label{theorem:6}
Let $G'(n,k,p)$ denote the set of all Tur\'an $k$-partite graphs where each edge is chosen independently with probability $p$. Also, let $P(G'(n,k,p))$ be the probability of a graph from $G'(n,k,p)$ having diameter $2$. For $n>2k$, we have
\begin{align*}
&\quad P(G'(n,k,p))\\
&\geq 1-\frac{n^2(1-p^2)^{n(1-1/k)-1}}{2k}(1+(k-1)(1-p^2)^{-n/k-1})\left(1+\frac{k}{n}\right)
\end{align*}
and
\begin{align*}
&\quad P(G'(n,k,p))\\
&\leq\frac{2k}{n^2(1-p^2)^{n(1-1/k)+1}}\left(1+(k-1)(1-p^2)^{1-n/k}(1-p)\right)^{-1}\left(1-\frac{2k}{n}\right)^{-1}\\
&\qquad +\frac{4k^3\left(1+\frac{p^3}{(1-p)}\right)^{n(1-1/k)+1}(1-p^2)^{-2}}{n(k-1)}\left(1-\frac{2k}{n}\right)^{-4}.
\end{align*}
\end{theorem}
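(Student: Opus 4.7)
My plan is to specialize the proof of Theorem \ref{theorem:5} to the Tur\'an setting, where every partite set has size $\lfloor n/k\rfloor$ or $\lceil n/k\rceil$, and exploit this near-uniformity to collapse the bounds into expressions depending only on $n$, $k$, and $p(n)$. I would adopt the same setup: write $p(n)=r/s$, let $A$ be the multiset of $k$-partite graphs weighted so that $|A|=s^t$ where $t=\sum_{i<j}n_in_j$, and let $B$ be the set of unordered vertex pairs with $a\sim b$ meaning that $b$'s two vertices neither are adjacent nor share a common neighbour. The key simplification is that $|n_i-n/k|<1$ for every $i$, so $n_i$, $n-n_i$, and $n-n_i-n_j$ can each be pinched to within an additive constant of their values in the perfectly balanced case.

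For the lower bound I would compute $\sum_{b\in B}\deg b$ as in the proof of Theorem \ref{theorem:5}, namely
\begin{equation*}
\sum_{b\in B}\deg b=s^t\sum_{i=1}^k\binom{n_i}{2}(1-p(n)^2)^{n-n_i}+s^t\sum_{1\le i<j\le k}n_in_j(1-p(n)^2)^{n-n_i-n_j}(1-p(n)),
\end{equation*}
and then use the monotonicity of $(1-p(n)^2)^x$ together with the bounds $n/k-1<n_i\le n/k+1$ to replace every $n_i$, and the corresponding exponent, by its balanced counterpart. Factoring out $n^2(1-p(n)^2)^{n(1-1/k)-1}/(2k)$, the two sums reassemble into the factor $1+(k-1)(1-p(n)^2)^{-n/k-1}$, while the floor/ceiling slack is absorbed into a clean $(1+k/n)$ prefactor. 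Dividing by $|A|=s^t$ and invoking the simple sieve yields the stated lower bound.

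For the upper bound I would repeat the case analysis of $\sum_{b_1,b_2\in B}n(b_1,b_2)$ from the proof of Theorem \ref{theorem:5}, split according to how many vertices of $b_1$ and $b_2$ coincide. In the Tur\'an regime every $D(r,s,n,n_i)$ and $D(r,s,n,n_i,n_j)$ collapses to a single expression in $n/k$ and $p(n)$, and in particular the denominator $(\sum_{b\in B}\deg b)^2$ in the Tur\'an sieve matches each numerator tightly, up to explicit linear-in-$k/n$ corrections. The contribution from disjoint $b_1,b_2$ reassembles into the first summand of the claimed upper bound (with the factor $(1-2k/n)^{-1}$ absorbing the denominator correction), while the remaining four cases, each carrying a factor $(1+p(n)^3/(1-p(n)))^{n-n_i-n_j}$ for some $i,j$, collapse together into the second summand once $n-n_k\le n(1-1/k)+1$ and $n_{k-1}-1\ge n/k-2$ are used to consolidate the exponents.

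The main obstacle is purely bookkeeping: verifying that the numerical constants $7/3$, $3$, $4$, and $4k^3$ in the statement emerge correctly from summing the Case-by-Case contributions, and that the compounded linear corrections cleanly produce $(1+k/n)$ in the lower bound and $(1-2k/n)^{-4}$ in the upper bound. The fourth power in $(1-2k/n)^{-4}$ indicates that four separate linear corrections must be simultaneously tracked, presumably two from converting $n_k-1$ and $n_{k-1}-1$ in denominators to $n/k$ and two from converting products $n_in_j/n^2$ in numerators to $1/k^2$; confirming that no fifth correction sneaks in is the only delicate point. No new conceptual ingredient beyond the proof of Theorem \ref{theorem:5} is required.
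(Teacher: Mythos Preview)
Your proposal is correct and matches the paper's own approach: the paper does not give a separate proof of Theorem \ref{theorem:6} but simply remarks that in the Tur\'an case one can compute $\sum_{b\in B}\deg b$ more precisely ``using the above methods,'' i.e., by specializing the sieve computations from the proof of Theorem \ref{theorem:5} and exploiting $|n_i-n/k|<1$. One small slip: the constants $7/3$ and $3$ you mention belong to the statement of Theorem \ref{theorem:5}, not Theorem \ref{theorem:6}, so your bookkeeping checklist should instead target the constants $2k$, $k-1$, and $4k^3$ together with the correction factors $(1+k/n)$ and $(1-2k/n)^{-1},(1-2k/n)^{-4}$.
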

\begin{corollary}
Let $G'(n,k,p)$ be defined as in Theorem \ref{theorem:6}. If $p=\frac{1}{2}$, we have
\begin{equation*}
P(G'(n,k,p),1/2)\geq 1-\frac{4n^2(3/4)^{n(1-1/k)}}{6k}\left(1+(k-1)(4/3)^{n/k+1}\right)\left(1+\frac{k}{n}\right).
\end{equation*}
\end{corollary}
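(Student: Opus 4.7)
The plan is to derive this corollary as an immediate specialization of the lower bound in Theorem~\ref{theorem:6}, substituting $p(n)=1/2$ throughout. The critical identities are $1-p(n)^2 = 3/4$ and hence $(1-p(n)^2)^{-1} = 4/3$, so each occurrence of $1-p(n)^2$ raised to a power becomes a power of $3/4$, while negative exponents can be flipped to positive powers of $4/3$. No independent argument is required; once Theorem~\ref{theorem:6} is granted, this is pure arithmetic bookkeeping.

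Applied to the lower bound
\[
1 - \frac{n^2(1-p(n)^2)^{n(1-1/k)-1}}{2k}\bigl(1+(k-1)(1-p(n)^2)^{-n/k-1}\bigr)\Bigl(1+\frac{k}{n}\Bigr),
\]
the factor $(1-p(n)^2)^{n(1-1/k)-1}$ becomes $(3/4)^{n(1-1/k)-1} = (4/3)\,(3/4)^{n(1-1/k)}$, while $(1-p(n)^2)^{-n/k-1}$ becomes $(4/3)^{n/k+1}$. Absorbing the surplus $4/3$ into the denominator $2k$ yields exactly $4/(6k)$, which is the constant that appears in the corollary, and the two bracketed factors $1+(k-1)(4/3)^{n/k+1}$ and $1+k/n$ are carried through unchanged. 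The only place one could go astray is in mishandling the shift of the exponent from $n(1-1/k)-1$ to $n(1-1/k)$, so I would explicitly record that $(3/4)^{-1}=4/3$ step; apart from that, there is no real obstacle, and the corollary drops out in a single line.
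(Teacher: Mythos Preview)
Your proposal is correct and matches the paper's approach: the paper presents this corollary without proof, as an immediate consequence of substituting $p(n)=\tfrac12$ into the lower bound of Theorem~\ref{theorem:6}, and your arithmetic (in particular the step $(3/4)^{n(1-1/k)-1}=(4/3)(3/4)^{n(1-1/k)}$ turning $\tfrac{1}{2k}$ into $\tfrac{4}{6k}$) is exactly the intended computation.
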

In the case when $p\rightarrow 0$ as $n\rightarrow\infty$, we can prove the following.
\begin{corollary}\label{prop:6}
Let $G'(n,k,p)$ be as in Theorem \ref{theorem:6}. Let $\lim_{n\rightarrow\infty}p^4n=0$. As $n\rightarrow\infty$, we have
\begin{align*}
P(G'(n,k,p))\geq 1-(1+o(1))\frac{n^2e^{-np^2\left(1-\frac{1}{k}\right)}}{2k}\left(1+(k-1)e^{\frac{np^2}{k}}\right)
\end{align*}
and
\begin{align}
&\quad P(G'(n,k,p))\\
&\leq(1+o(1))\frac{2ke^{np^2\left(1-\frac{1}{k}\right)}}{n^2}\left(1+(k-1)e^{\frac{np^2}{k}}\right)^{-1}\nonumber\\
&\quad\cdot\left(1+\frac{2k^2ne^{\left(np^3-np^2\right)(1-1/k)}}{(k-1)}+2k^2ne^{\left(np^3-np^2\right)(1-1/k)+\frac{np^2}{k}}\right)\nonumber
\end{align}
as $n\rightarrow\infty$. Suppose further that
\begin{equation*}
\lim_{n\rightarrow\infty}\left(2\log n-\log k-np^2\left(1-\frac{1}{k}\right)-\log 2+\log\left(1+(k-1)e^{\frac{np^2}{k}}\right)\right)=c
\end{equation*}
for some $c\in\mathbb{R}\backslash\{0\}$.
\newline
1) If $c<0$, we have
\begin{equation*}
P(G'(n,k,p))\geq 1-(1+o(1))e^c.
\end{equation*}
2) If $c>0$, we have
\begin{equation*}
P(G'(n,k,p))\leq(1+o(1))e^{-c}.
\end{equation*}
\end{corollary}
\begin{remark}
Similar to Remark \ref{rk1}, all $o(1)$ terms in Corollary \ref{prop:6} can be made explicit.
\end{remark}
\begin{remark}
We can similarly derive all of the above results for directed $k$-partite graphs on $n$ vertices where each directed edge is chosen independently with probability $p$. Furthermore, for any two vertices, say $v_1$ and $v_2$, occuring in different partite sets, the existence of the edge from $v_1$ to $v_2$ has probability $p$, while the existence of the edge from $v_2$ to $v_1$ also occurs with probability $p$, and these two edges occur independently. In the appropriate theorems, corollaries, and Corollarys, we multiply the second term of the rebound by $2$, divide the upperbound by $2$, and we add $\log 2$ to our expressions for $c$.. Everything else is left unchanged.
\end{remark}
\section{Bipartite Graphs with Diameter $3$}
Here we analyze bipartite graphs in a similar way to $k$-partite graphs, but instead of considering diameter $2$, we consider diameter $3$ since, except for the complete bipartite graph, all bipartite graphs have diameter at least $3$.
\begin{theorem}\label{theorem11}
For each $n\in\mathbb{N}$, $n\geq 4$, pick $n_1,n_2\in\mathbb{N}$ such that $2\leq n_1\leq n_2$ and $n_1+n_2=n$. Let $G''(n_1,n_2,p)$ denote the set of all bipartite graphs with the partite sets having $n_1$ and $n_2$ vertices respectively where each edge is chosen independently with probability $p$. Also, let $P(G''(n_1,n_2,p))$ be the probability of a graph from $G''(n_1,n_2,p)$ having diameter $3$. Then
\begin{align*}
P(G''(n_1,n_2,p))\geq 1-\frac{n_2^2(1-p^2)^{n_1}}{2}\left(1+\frac{n_1^2(1-p^2)^{n_2-n_1}}{n_2^2}\right)
\end{align*}
and
\begin{align*}
&\quad P(G''(n_1,n_2,p))\\
&\leq\left(\frac{2}{n_2(n_2-1)(1-p^2)^{n_1}}+\frac{\left(1+\frac{p^3}{(1-p)}\right)^{n_1}}{n_2}\left(8+\frac{8}{(1-p)}\right)\right)\\
&\qquad\cdot\left(1+\frac{n_1(n_1-1)(1-p^2)^{n_2-n_1}}{n_2(n_2-1)}\right)^{-1}.
\end{align*}
\end{theorem}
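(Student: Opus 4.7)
The plan is to mirror the sieve setup from Theorems \ref{theorem:3} and \ref{theorem:5}. After reducing to rational $p(n) = r/s$ by continuity, I take $A$ to be the weighted multiset of bipartite graphs on $V_1 \cup V_2$ (with $r^k(s-r)^{n_1 n_2 - k}$ copies of each graph with $k$ edges), so $|A| = s^{n_1 n_2}$. I let $B$ consist of all unordered pairs of vertices lying in a common partite set, with $|B| = \binom{n_1}{2} + \binom{n_2}{2}$, and declare $a \sim b$ iff the pair $b$ has no common neighbor in $a$. A short bipartite-distance argument then shows $\omega(a) = 0$ iff $a$ has diameter at most $3$: if every same-partite pair admits a common neighbor in the opposite part, then for non-adjacent $u \in V_1$, $v \in V_2$, any neighbor $w \in V_2$ of $u$ shares a common neighbor $y \in V_1$ with $v$, so $u - w - y - v$ is a path of length $3$.

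For the simple sieve, each of the $n_1$ potential common neighbors in $V_1$ of a pair $b \subset V_2$ independently fails to be a common neighbor with probability $1 - p^2$, giving $\deg b = (1-p^2)^{n_1}\, s^{n_1 n_2}$; symmetrically $\deg b = (1-p^2)^{n_2}\, s^{n_1 n_2}$ for $b \subset V_1$. Summing and applying the simple sieve, together with $\binom{n_i}{2} \leq n_i^2/2$, yields the claimed lower bound.

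For the Tur\'an sieve upper bound, I would compute $n(b_1,b_2)$ case by case, based on the overlap of $b_1,b_2$ and which partite sets they meet. The identity case $b_1 = b_2$ contributes $\sum_b \deg b$, producing $|A|/\sum_b\deg b$ in the Tur\'an bound; this matches the first summand of the claimed upper bound after extracting $\binom{n_2}{2}(1-p^2)^{n_1}$ to form the $(1 + \ldots)^{-1}$ factor. When $b_1,b_2$ are disjoint and lie in the same partite set, the edges to potential common neighbors are disjoint, so $n(b_1,b_2) = \deg b_1 \deg b_2 / |A|$ and these terms essentially cancel the $-1$ in the Tur\'an bound. In the shared-vertex case, which forces $b_1,b_2$ into a common partite set, direct computation shows that for each potential common neighbor $w$ in the opposite part the joint non-common-neighbor probability is $(1-p)(1 + p - p^2)$; using the identity $(1 + p - p^2)/[(1-p)(1+p)^2] = 1 + p^3/[(1-p)(1+p)^2] \leq 1 + p^3/(1-p)$, one bounds the per-$w$ correction by $(1-p^2)^2\bigl(1 + p^3/(1-p)\bigr)$, yielding the $\bigl(1 + p^3/(1-p)\bigr)^{n_1}$ factor from the $V_2$-shared sub-case (and an analogous factor with exponent $n_2$ for the $V_1$-shared sub-case). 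Finally, when $b_1,b_2$ are disjoint but lie in opposite partite sets, the $2 \times 2$ block of edges between them is coupled, and direct enumeration (matrices with no all-$1$ row and no all-$1$ column) gives coupled probability $(1-p)^2(1 + 2p - p^2)$, a bounded correction that is absorbed into the same $1 + p^3/(1-p)$ form.

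Summing all cases, dividing by $\bigl(\sum_b \deg b\bigr)^2$ (bounded below by its dominant $\binom{n_2}{2}(1-p^2)^{n_1}$ contribution), multiplying by $|A|$, and finally absorbing lower-order terms via a crude estimate such as $p^3/(1-p)^2 \leq \bigl(1 + p^3/(1-p)\bigr)^{n_2}/n_2$ (mirroring the final step in the proof of Theorem \ref{theorem:3}) should yield the claimed upper bound. The main obstacle is the bookkeeping required to show that the several coupled contributions---shared-vertex in $V_1$, shared-vertex in $V_2$, and disjoint cross-partite---combine cleanly into a single term with leading constants $8 + 8/(1-p)$; this is the step most vulnerable to off-by-constant-factor errors, but its structure is identical to the calculations carried out in Theorems \ref{theorem:3} and \ref{theorem:5}.
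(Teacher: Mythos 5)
Your proposal follows essentially the same route as the paper: the same weighted set $A$, the same sieving set $B$ of same-partite pairs with $a\sim b$ meaning no common neighbour, the same degree computation for the simple-sieve lower bound, and the same case decomposition of $n(b_1,b_2)$ (identical pairs, disjoint same-set pairs, shared-vertex pairs with per-neighbour factor $(1-p)(1+p-p^2)$, and the coupled $2\times 2$ cross-set block), all matching the paper's calculations. The only part you leave as a sketch is the final assembly into the stated constants $8+8/(1-p)$, which the paper completes with the elementary absorptions $\frac{p(n)^3}{1-p(n)}<\frac{1}{n_1}\bigl(1+\frac{p(n)^3}{1-p(n)}\bigr)^{n_1}$ and $\bigl(1+\frac{p(n)^3}{1-p(n)}\bigr)(1-p(n)^2)^2<1$ together with $n_1\le n_2$ --- exactly the kind of bookkeeping you anticipated.
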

\begin{proof}
As in the proof of Theorem \ref{theorem:3}, we may assume that $p\in\mathbb{Q}\cap(0,1)$ for all $n\in\mathbb{N}$.

Let $p=\frac{r}{s}$ where $r,s\in\mathbb{N}$. As in the proof of Theorem \ref{theorem:3}, we let $A$ be the set of all graphs in $G''(n_1,n_2,p)$, allowing for a number of duplicates of each possible graph to accommodate the edge probability $p$. Since the complete bipartite graph has $n_1n_2$ edges, we have $r^{n_1n_2}$ copies of the complete bipartite graph and $|A|=s^{n_1n_2}$.

We let $B$ be the set of all pairs of vertices such that both vertices of a pair occur in the same partite set. Thus, $|B|=\binom{n_1}{2}\binom{n_2}{2}$. For $a\in A$ and $b\in B$, we write $a\sim b$ if the pair of vertices $b$ in the graph $a$ do not share a common neighbouring vertex. Thus, we will have $\omega(a)=0$ if and only if $a$ is connected with diameter at most $3$. For each pair of vertices $b\in B$ in the set containing $n_1$ vertices, we have
\begin{align*}
D(r,s,n,n_1):=\deg b=((s-r)^2+2r(s-r))^{n_2}((s-r)+r)^{n_1n_2-2n_2}.
\end{align*}
For each pair of vertices $b\in B$ in the set containing $n_2$ vertices, the $n_1$ and $n_2$ are switched in the above equality. It follows that
\begin{equation*}
\sum_{b\in B}\deg b=\frac{s^{n_1n_2}n_1(n_1-1)(1-p^2)^{n_2}}{2}+\frac{s^{n_1n_2}n_2(n_2-1)(1-p^2)^{n_1}}{2}.
\end{equation*}
By the simple sieve, we obtain
\begin{align*}
P(G''(n_1,n_2,p))&> 1-\frac{n_1^2(1-p^2)^{n_2}}{2}-\frac{n_2^2(1-p^2)^{n_1}}{2}\\
&=1-\frac{n_2^2(1-p^2)^{n_1}}{2}\left(1+\frac{n_1^2(1-p^2)^{n_2-n_1}}{n_2^2}\right)
\end{align*}
To get an upper bound for $P(G''(n_1,n_2,p))$, we need to estimate $\sum_{b_1,b_2\in B}n(b_1,b_2)$. Using the same argument as in the proof of Theorem \ref{theorem:3}, we can get
\begin{align*}
&\quad\sum_{b_1,b_2\in B}n(b_1,b_2)\\
&=\binom{n_1}{2}\binom{n_1-2}{2}\frac{D(r,s,n,n_2)^2}{s^{n_1n_2}}+\binom{n_2}{2}\binom{n_2-2}{2}\frac{D(r,s,n,n_1)^2}{s^{n_1n_2}}\\
&\quad +2\binom{n_1}{2}\binom{n_2}{2}\frac{D(r,s,n,n_1)D(r,s,n,n_2)}{s^{n_1n_2}}\cdot\left(1+\frac{4p^3}{(1-p)^2}\right)\\
&\quad +\frac{D(r,s,n,n_2)^2n_2(n_2-1)(n_2-2)}{s^{n_1n_2}}\left(1+\frac{p^3}{(1-p)}\right)^{n_1}\\
&\quad +\frac{s^{n_1n_2}n_2(n_2-1)(1-p^2)^{n_1}}{2}\left(1+\frac{n_1(n_1-1)(1-p^2)^{n_2-n_1}}{n_2(n_2-1)}\right).
\end{align*}
Then, by the Tur\'an sieve, we can get
\begin{align*}
&\quad P(G''(n_1,n_2,p))\cdot\left(1+\frac{n_1(n_1-1)(1-p^2)^{n_2-n_1}}{n_2(n_2-1)}\right)^2.\\
&<\frac{2}{n_2(n_2-1)(1-p^2)^{n_1}}\left(1+\frac{n_1(n_1-1)(1-p^2)^{n_2-n_1}}{n_2(n_2-1)}\right)\\
&\quad+\frac{4n_1^3\left(1+\frac{p^3}{(1-p)}\right)^{n_2}(1-p^2)^{2n_2-2n_1}}{n_2^4}+\frac{4\left(1+\frac{p^3}{(1-p)}\right)^{n_1}}{n_2}\\
&\quad+\frac{8n_1^2\left(\frac{p^3}{(1-p)^2}\right)(1-p^2)^{n_2-n_1}}{n_2^2}.
\end{align*}
Notice that
\begin{equation*}
\frac{p^3}{(1-p)}<\frac{1}{n_1}\left(1+n_1\frac{p^3}{(1-p)}\right)<\frac{1}{n_1}\left(1+\frac{p^3}{(1-p)}\right)^{n_1}
\end{equation*}
and
\begin{align*}
\left(1+\frac{p^3}{(1-p)}\right)(1-p^2)^2
&<1.
\end{align*}
It follows that
\begin{align*}
&\quad P(G''(n_1,n_2,p))\left(1+\frac{n_1(n_1-1)(1-p^2)^{n_2-n_1}}{n_2(n_2-1)}\right)\\
&<\frac{2}{n_2(n_2-1)(1-p^2)^{n_1}}+\frac{4n_1^3\left(1+\frac{p^3}{(1-p)}\right)^{n_2}(1-p^2)^{2n_2-2n_1}}{n_2^4}\\
&\quad +\frac{\left(1+\frac{p^3}{(1-p)}\right)^{n_1}}{n_2}\left(4+\frac{8n_1(1-p^2)^{n_2-n_1}}{n_2(1-p)}\right)\\
&\leq\frac{2}{n_2(n_2-1)(1-p^2)^{n_1}}+\frac{\left(1+\frac{p^3}{(1-p)}\right)^{n_1}}{n_2}\left(8+\frac{8}{(1-p)}\right)
\end{align*}
from which we obtain our upper bound. This completes the proof of Theorem \ref{theorem11}.
\end{proof}
By substituting in $p=\frac{1}{2}$, we deduce from Theorem \ref{theorem11} the following.
\begin{corollary}\label{cor341}
Let $P(G''(n_1,n_2,p))$ be defined as in Theorem \ref{theorem11}. If $p=\frac{1}{2}$, then we have
\begin{equation*}
P(G''(n_1,n_2,p),1/2)\geq 1-\frac{n_2^2(3/4)^{n_1}}{2}\left(1+\frac{n_1^2(3/4)^{n_2-n_1}}{n_2^2}\right)
\end{equation*}
and
\begin{align*}
P(G''(n_1,n_2,p),1/2)\leq\left(\frac{2(4/3)^{n_1}}{n_2(n_2-1)}+\frac{24(5/4)^{n_1}}{n_2}\right)\left(1+\frac{n_1(n_1-1)(3/4)^{n_2-n_1}}{n_2(n_2-1)}\right)^{-1}.
\end{align*}
\end{corollary}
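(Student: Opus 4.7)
The plan is to obtain this corollary as a direct specialization of Theorem \ref{theorem11}: everything reduces to substituting $p(n)=1/2$ into the two bounds already established, and simplifying each elementary expression in $p(n)$ to a rational number.

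First I would handle the lower bound. With $p(n)=1/2$ we have $1-p(n)^2=3/4$, so $(1-p(n)^2)^{n_1}=(3/4)^{n_1}$ and $(1-p(n)^2)^{n_2-n_1}=(3/4)^{n_2-n_1}$. Plugging these into the lower bound of Theorem \ref{theorem11} immediately yields
\begin{equation*}
P(G^{(n),\bold{b}},1/2)\geq 1-\frac{n_2^2(3/4)^{n_1}}{2}\left(1+\frac{n_1^2(3/4)^{n_2-n_1}}{n_2^2}\right),
\end{equation*}
which is exactly the claimed lower bound.

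For the upper bound I would compute the three atomic quantities that appear. With $p(n)=1/2$: $(1-p(n)^2)^{-n_1}=(4/3)^{n_1}$; next $\frac{p(n)^3}{1-p(n)}=\frac{1/8}{1/2}=\frac14$, so $1+\frac{p(n)^3}{1-p(n)}=\frac54$ and thus $\left(1+\frac{p(n)^3}{1-p(n)}\right)^{n_1}=(5/4)^{n_1}$; and finally $8+\frac{8}{1-p(n)}=8+16=24$. Substituting these three simplifications and $(1-p(n)^2)^{n_2-n_1}=(3/4)^{n_2-n_1}$ into the upper bound of Theorem \ref{theorem11} gives
\begin{equation*}
P(G^{(n),\bold{b}},1/2)\leq\left(\frac{2(4/3)^{n_1}}{n_2(n_2-1)}+\frac{24(5/4)^{n_1}}{n_2}\right)\left(1+\frac{n_1(n_1-1)(3/4)^{n_2-n_1}}{n_2(n_2-1)}\right)^{-1},
\end{equation*}
which is the claimed upper bound.

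There is no real obstacle here: the proof is purely a matter of bookkeeping the numerical values at $p(n)=1/2$. The only minor care needed is to make sure to simplify $\frac{p^3}{1-p}$ rather than $\frac{p^3}{(1-p)^2}$ (the latter appears in earlier theorems but not in Theorem \ref{theorem11}'s final upper bound), and to keep the factor $\left(1+\tfrac{n_1(n_1-1)(3/4)^{n_2-n_1}}{n_2(n_2-1)}\right)^{-1}$ intact rather than trying to distribute it across the two summands.
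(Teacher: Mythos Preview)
Your proposal is correct and matches the paper's approach exactly: the corollary is obtained simply by substituting $p(n)=1/2$ into Theorem \ref{theorem11}, and your numerical simplifications ($1-p^2=3/4$, $p^3/(1-p)=1/4$, $8+8/(1-p)=24$) are all accurate.
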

\begin{remark}
The upper bound given for $P(G''(n_1,n_2,p),1/2)$ in Corollary \ref{cor341} will in general only be non-trivial when $n_2$ much larger than $n_1$. For instance, if $n_1<\min\{\frac{2\log n_2-\log 8}{\log(4/3)},\frac{\log n_2-\log 48}{\log(5/4)}\}$, then the upper bound will be less than $1$.
\end{remark}
In the situation where the edge probability $p\rightarrow 0$ as $n\rightarrow\infty$, we will show the following.
\begin{corollary}\label{prop:11}
Let $P(G''(n_1,n_2,p))$ be as in Theorem \ref{theorem11}. Let $\lim_{n\rightarrow\infty}np^4=0$. We have
\begin{align*}
P(G(n,p))\geq 1-\frac{n_2^2e^{-n_1p^2}}{2}\left(1+e^{2\log n_1-2\log n_2-(n_2-n_1)p^2}\right)
\end{align*}
and
\begin{align}
P(G(n,p))&\leq(1+o(1))\left(\frac{2}{n_2^2}e^{n_1p^2}\right)\left(1+e^{2\log n_1-2\log n_2-(n_2-n_1)p^2}\right)^{-1}\\
&\qquad\cdot\left(1+8n_2e^{n_1p^2\left(p-1\right)}\right)\nonumber.
\end{align}
Suppose further that
\begin{equation*}
\lim_{n\rightarrow\infty}\left(2\log n_1-2\log n_2-(n_2-n_1)p^2\right)=-\infty,
\end{equation*}
and
\begin{equation*}
\lim_{n\rightarrow\infty}\left(2\log n_2-n_1p^2-\log 2\right)=c
\end{equation*}
for some $c\in\mathbb{R}$.
\newline
1) If $c<0$, we have
\begin{equation*}
P(G''(n_1,n_2,p))\geq1-(1+o(1))e^c.
\end{equation*}
2) If $c>0$, we have
\begin{equation*}
P(G''(n_1,n_2,p))\leq(1+o(1))e^{-c}.
\end{equation*}
\end{corollary}
\begin{proof}
By the upper bound of $P(G^{(n),\bf{b}},p)$ in Theorem \ref{theorem11}, we can get
\begin{align*}
&\quad P(G''(n_1,n_2,p))\left(1+\frac{n_1(n_1-1)\left(1-p^2\right)^{n_2-n_1}}{n_2(n_2-1)}\right)\\
&<\frac{2}{n_2(n_2-1)\left(1-p^2\right)^{n_1}}+\left(8+\frac{8}{1-p}\right)\frac{\left(1+\frac{p^3}{1-p}\right)^{n_1}}{n_2}.
\end{align*}
Since $\lim_{n\rightarrow\infty}np^4=0$, we have $\lim_{n\rightarrow\infty}n_1p^4=0$ and so
\begin{align*}
&\quad\lim_{n\rightarrow\infty}n_1p^2\left(\frac{p}{1-p}-1\right)-n_1p^2\left(p-1\right)
=0.
\end{align*}
Also, since $p^{-2}\geq 1$, we have
\begin{equation*}
\left(1-p^2\right)^{n_1}>e^{-n_1p^2}\left(1-p^2\right)^{n_1p^2}=e^{-n_1p^2}(1-o(1))
\end{equation*}
and
\begin{align*}
\frac{\left(1+\frac{p^3}{1-p}\right)^{n_1}}{n_2}<\frac{e^{\frac{n_1p^3}{1-p}}}{n_2}=\frac{e^{n_1p^2}}{n_2^2}\cdot n_2e^{n_1p^2\left(\frac{p}{1-p}-1\right)}.
\end{align*}
Also,
\begin{align*}
&\quad\frac{n_1^2\left(1-p^2\right)^{n_2-n_1}}{n_2^2\left(1-p^2\right)}\\
&=e^{2\log n_1-2\log n_2}\left(1-p^2\right)^{p^{-2}\cdot (n_2-n_1)p^2}\left(1-p^2\right)^{-1}\\
&>e^{2\log n_1-2\log n_2-(n_2-n_1)p^2}\left(1-p^2\right)^{(n_2-n_1)p^2}\left(1-p^2\right)^{-1}.
\end{align*}
Since $\lim_{n\rightarrow\infty}np^4=0$, we have
\begin{equation*}
\lim_{n\rightarrow\infty}\left(1-p^2\right)^{(n_2-n_1)p^2}=1.
\end{equation*}
We thus obtain our bounds. Statements (1) and (2) follow as in the proof of Corollary \ref{cor:r2}.
\end{proof}
\begin{remark}
Similar to Remark \ref{rk1}, all $o(1)$ terms in Corollary \ref{prop:11} can be made explicit.
\end{remark}
Substituting in $n_1=n_2=\frac{n}{2}$ or $n_1=\frac{n-1}{2}$ and $n_2=\frac{n+1}{2}$ can lead to similar asymptotics for Tur\'an bipartite graphs.
\begin{theorem}\label{theorem12}
Let $G'''(n,p)$ denote the set of all Tur\'an bipartite graphs where each edge is chosen independently with probability $p$. Also, let $P(G'''(n,p))$ be the probability of a graph from $G'''(n,p)$ having diameter $3$. For $n\geq 4$, we have
\begin{align*}
P(G'''(n,p))\geq 1-\frac{(n+1)^2(1-p^2)^{(n-1)/2}}{8}
\end{align*}
and
\begin{align*}
&\quad P(G'''(n,p))\\
&\leq\left(\frac{8}{n(n-2)(1-p^2)^{n/2}}+\frac{2\left(1+\frac{p^3}{(1-p)}\right)^{n/2}}{n}\left(8+\frac{8}{(1-p)}\right)\right)\\
&\qquad\cdot\left(1+\frac{(n-3)(1-p^2)}{(n+1)}\right)^{-1}.
\end{align*}
\end{theorem}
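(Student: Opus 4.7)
The plan is to deduce Theorem \ref{theorem12} directly from Theorem \ref{theorem11} by specializing to the Turán partition $n_1 = \lfloor n/2 \rfloor$ and $n_2 = \lceil n/2 \rceil$. For $n \geq 4$, this choice satisfies the hypotheses $2 \leq n_1 \leq n_2$ and $n_1 + n_2 = n$ of Theorem \ref{theorem11}, and $n_2 - n_1 \in \{0, 1\}$. The strategy is to handle the even and odd parities of $n$ simultaneously via the uniform estimates $n_1 \leq n/2 \leq n_2 \leq (n+1)/2$ and $n_1 \geq (n-1)/2$, and to absorb parity-dependent expressions into single uniform bounds.

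For the lower bound, I would start from
\begin{equation*}
1 - P(G^{(n),\bold{b}},p(n)) \leq \frac{n_2^2(1-p(n)^2)^{n_1}}{2}\left(1 + \frac{n_1^2(1-p(n)^2)^{n_2-n_1}}{n_2^2}\right)
\end{equation*}
in Theorem \ref{theorem11}, substitute the Turán values, and collapse the bracket using $n_2 \leq (n+1)/2$ together with $(1-p(n)^2)^{n_1} \leq (1-p(n)^2)^{(n-1)/2}$, bounding everything by the declared multiple of $(n+1)^2(1-p(n)^2)^{(n-1)/2}$.

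For the upper bound, I would substitute the Turán parameters into the upper bound of Theorem \ref{theorem11}. The leading $\tfrac{2}{n_2(n_2-1)(1-p(n)^2)^{n_1}}$ can be bounded by $\tfrac{8}{n(n-2)(1-p(n)^2)^{n/2}}$ using $n_2(n_2-1) \geq n(n-2)/4$ (equality for even $n$) and $n_1 \leq n/2$. The second piece becomes $\tfrac{2(1+p(n)^3/(1-p(n)))^{n/2}}{n}(8 + 8/(1-p(n)))$ upon applying $n_2 \geq n/2$ and $n_1 \leq n/2$. Finally, the correction factor $(1 + n_1(n_1-1)(1-p(n)^2)^{n_2-n_1}/(n_2(n_2-1)))^{-1}$ evaluates to $(1+(n-3)(1-p(n)^2)/(n+1))^{-1}$ in the odd case and to $1/2$ in the even case; since $(n-3)(1-p(n)^2)/(n+1) \leq 1$, the odd-case expression is at least $1/2$, so it serves as a uniform upper bound on the factor and produces the stated inequality.

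The main obstacle will be the book-keeping, namely making sure that the various parity-sensitive substitutions and one-sided estimates for $n_1$, $n_2$, and $n_2 - n_1$ combine consistently with the intended closed-form constants. In particular, the lower bound requires some care because the exact value for odd $n$ is a sum of two terms of the same order of magnitude, so one must verify that a single expression in $(n+1)$ can absorb both within the declared constant; monotonicity of $(1-p(n)^2)^k$ in $k$ and of $n \mapsto n(n-2)$ will be the main tools for this verification.
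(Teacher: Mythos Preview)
Your approach matches the paper's: immediately before Theorem~\ref{theorem12} the authors write ``Substituting in $n_1=n_2=\frac{n}{2}$ or $n_1=\frac{n-1}{2}$ and $n_2=\frac{n+1}{2}$ can lead to similar asymptotics for Tur\'an bipartite graphs'' and give no further argument. Your treatment of the upper bound is correct and in fact more explicit than anything the paper provides; the inequalities $n_2(n_2-1)\ge n(n-2)/4$, $n_1\le n/2\le n_2$, and $\frac{(n-3)(1-p(n)^2)}{n+1}\le 1$ are exactly what is needed.

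There is, however, a genuine gap in your lower-bound step that you glide over with ``collapse the bracket \ldots\ bounding everything by the declared multiple''. The bracket
\[
1+\frac{n_1^2(1-p(n)^2)^{n_2-n_1}}{n_2^2}
\]
from Theorem~\ref{theorem11} lies in $[1,2]$ and equals $2$ when $n$ is even. Combined with $n_2^2/2\le(n+1)^2/8$ and $(1-p(n)^2)^{n_1}\le(1-p(n)^2)^{(n-1)/2}$, substitution therefore yields at best
\[
1-P(G^{(n),\bold{b},t},p(n))\le\frac{(n+1)^2(1-p(n)^2)^{(n-1)/2}}{4},
\]
not the printed bound with denominator $8$. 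Indeed the stated inequality with $8$ cannot hold in general: for even $n\ge 8$ and $p(n)$ near $0$ the exact simple-sieve value $\tfrac{n(n-2)}{4}(1-p(n)^2)^{n/2}$ already exceeds $\tfrac{(n+1)^2}{8}(1-p(n)^2)^{(n-1)/2}$, since $2n(n-2)>(n+1)^2$ for $n\ge 7$. The $8$ in the statement is almost certainly a misprint for $4$; your substitution strategy is the right one and proves the corrected constant, but it cannot deliver the one printed.
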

Substituting $p=\frac{1}{2}$ gives the following.
\begin{corollary}
Let $G'''(n,p)$ be defined as in Corollary \ref{theorem12}. If $p=\frac{1}{2}$, then we have
\begin{equation*}
P(G''(n_1,n_2,p),1/2)\geq 1-\frac{(n+1)^2(3/4)^{(n-1)/2}}{4}.
\end{equation*}
\end{corollary}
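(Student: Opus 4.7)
The plan is to obtain this corollary as an immediate specialization of Theorem \ref{theorem12} to the constant edge probability $p(n) = 1/2$. Since the hypothesis $n \geq 4$ is carried over from the theorem, no additional setup is needed, and the entire argument reduces to a single substitution followed by a trivial monotonicity observation.

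Concretely, I would start from the lower bound in Theorem \ref{theorem12}, namely
\[
P(G^{(n),\bold{b},t},p(n)) \geq 1 - \frac{(n+1)^2 (1-p(n)^2)^{(n-1)/2}}{8},
\]
and set $p(n) = 1/2$. Then $1 - p(n)^2 = 3/4$, so the right-hand side becomes $1 - \frac{(n+1)^2 (3/4)^{(n-1)/2}}{8}$. Finally, since $(n+1)^2 (3/4)^{(n-1)/2} \geq 0$, we have
\[
1 - \frac{(n+1)^2 (3/4)^{(n-1)/2}}{8} \geq 1 - \frac{(n+1)^2 (3/4)^{(n-1)/2}}{4},
\]
which yields the claimed inequality. (The constant in the denominator has been relaxed from $8$ to $4$, so the resulting bound is weaker but uniformly valid.)

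There is essentially no obstacle here: the only thing to verify is that the hypotheses of Theorem \ref{theorem12} are met, which is immediate from the fact that the corollary inherits the same setup ($G^{(n),\bold{b},t}$ being the family of Tur\'an bipartite graphs with independent edge probability $p(n)$, and $n \geq 4$). No estimation, sieve reasoning, or asymptotic analysis is required, since all of that work was already carried out in the proof of Theorem \ref{theorem12}. The corollary's purpose is simply to record the concrete numerical consequence of that theorem in the most commonly cited case $p(n) = 1/2$.
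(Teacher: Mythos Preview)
Your proposal is correct and matches the paper's approach: the paper simply states that substituting $p(n)=\tfrac{1}{2}$ into Theorem~\ref{theorem12} gives the corollary, without further argument. Your extra step noting that the denominator $8$ from the theorem can be weakened to $4$ makes explicit the trivial relaxation that the paper leaves implicit.
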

In the situation where the edge probability $p\rightarrow 0$ as $n\rightarrow\infty$, we have the following.
\begin{corollary}\label{prop:12}
Let $G'''(n,p)$ be defined as in Corollary \ref{theorem12}. Let $\lim_{n\rightarrow\infty}np^4=0$. We have
\begin{equation*}
P(G'''(n,p))\geq 1-(1+o(1))\frac{n^2e^{-\frac{np^2}{2}}}{4}
\end{equation*}
and
\begin{equation*}
P(G'''(n,p))\leq(1+o(1))\left(\frac{4}{n^2}e^{\frac{np^2}{2}}\right)\left(1+8ne^{\frac{np^2}{2}\left(p^2-1\right)}\right).
\end{equation*}
Suppose further that
\begin{equation*}
\lim_{n\rightarrow\infty}\left(2\log n-\log 4-\frac{np^2}{2}\right)=c
\end{equation*}
for some $c\in\mathbb{R}$.
\newline
1) If $c<0$, we have
\begin{equation*}
P(G'''(n,p))\geq1-(1+o(1))e^c.
\end{equation*}
2) If $c>0$, we have
\begin{equation*}
P(G'''(n,p))\leq(1+o(1))e^{-c}.
\end{equation*}
\end{corollary}
\begin{remark}
Similar to Remark \ref{rk1}, all $o(1)$ terms in Corollary \ref{prop:12} can be made explicit.
\end{remark}
\begin{remark}
Again, we can give analogous results for directed bipartite graphs on $n$ vertices where each directed edge is chosen independently with probability $p$. Furthermore, for any two vertices, say $v_1$ and $v_2$, occuring in different partite sets, the existence of the edge from $v_1$ to $v_2$ has probability $p$, while the existence of the edge from $v_2$ to $v_1$ also occurs with probability $p$, and these two edges occur independently. In the appropriate theorems, corollaries, and Corollarys, we multiply the second term of the lowerbound by $2$, divide the upperbound by $2$, and we add $\log 2$ to our expressions for $c$.  Everything else is left unchanged.
\end{remark}
\section{Initial Results for Graphs with Diameter $d\geq 2$}
We now generalise the above results for any given diameter $d\geq 2$. In this section, we give such a result for a graph from $G(n,p)$ having diameter at most $d$ for some $d\geq 2$ with some restrictions in place for $n$ and $p$. Then in the next section, we refine this result to make it more clear and meaningful by imposing further restrictions on $n$ and $p$. First, a note.
\begin{note}
Throughout this note let
\begin{equation*}
f(n,p,d,i_0):=p\prod_{i=0}^{d-2}\left(\frac{1-(1-p)^{(4np)^ii_0}}{(4np)^ii_0}\right)\prod_{j=1}^{d-2}\left(\frac{1-(1-p^{j+1})^{n^j}}{n^jp^{j+1}}\right),
\end{equation*} 
\begin{equation*}
h(n,p,d,i_0):=\left(1-\frac{4}{5}\left(\frac{e}{3}\right)^{4p\left(n-i_0-4npi_0-\ldots-(4np)^{d-4}i_0\right)}\right)^{2-d}.
\end{equation*}
with the conventions that $h(n,p,2,i_0)=1$ and
\begin{equation*}
h(n,p,3,i_0)=\left(1-\frac{4}{5}\left(\frac{e}{3}\right)^{4np}\right)^{-1},
\end{equation*}
and
\begin{equation*}
g(n,p,d,d',i_0):=\begin{cases}
                      n-1 & d=2\\
                      1+\sum_{l=0}^{d'}\prod_{m=0}^{l}\left(n-1-\sum_{q=0}^m(4np)^qi_0\right) & d\geq 3, d'<d-3\\
                      1+\sum_{l=0}^{d-4}\prod_{m=0}^{l}\left(n-1-\sum_{q=0}^m(4np)^qi_0\right) \\
                      \quad+\left(n-1-\sum_{q=0}^{d-3}(4np)^qi_0\right)\\
                      \qquad\cdot\prod_{m=0}^{d-3}\left(n-1-\sum_{q=0}^m(4np)^qi_0\right)  & d\geq 3, d'\geq d-3.
                       \end{cases}
\end{equation*}
\end{note}
We will prove the following theorem.
\begin{theorem}\label{bigthm}
Fix $d\geq 2$, $d\in\mathbb{N}$. Let $G(n,p)$ denote the set of all simple graphs on $n$ vertices where each edge is chosen independently with probability $p$. Also, let $P(G(n,p),d)$ be the probability of a graph from $G(n,p)$ having diameter at most $d$. Suppose that
\begin{equation*}
2+8np+2(4np)^2+\ldots+2(4np)^{d'}\leq n-2
\end{equation*}
where $d'\geq 0$. We have
\begin{equation*}
P(G(n,p),d)>1-\binom{n}{2}h(n,p,d,1)\left(1-f(n,p,d,1)\right)^{g(n,p,d,d',1)}
\end{equation*}
and
\begin{align*}
P(G(n,p),d)&<{(1-p^d)^{-2\left(n^{d-1}+\frac{n^{d-2}}{p}+\frac{n^{d-3}}{p^2}+\ldots+\frac{1}{p^{d-1}}\right)}}h(n-1,p,d,2)\\
&\quad\cdot\left(1-f(n,p,d,2)\right)^{2\cdot g(n-1,p,d,d',2)}\\
&\quad-1+\frac{1}{\binom{n}{2}(1-p^d)^{\left(n^{d-1}+\frac{n^{d-2}}{p}+\frac{n^{d-3}}{p^2}+\ldots+\frac{1}{p^{d-1}}\right)}}.
\end{align*}
\end{theorem}
To prove Theorem \ref{bigthm}, we first need the following two lemmas.
\begin{lemma}\label{combineq}
For all $n,m\in\mathbb{N}$ with $m<n$, we have
\begin{equation*}
\binom{n}{m}<\left(\frac{n}{m}\right)^m\cdot\frac{e^{1+1/(12n)}}{\sqrt{2\pi m\left(1-\frac{m}{n}\right)}}.
\end{equation*}
\end{lemma}
\begin{proof}
Robbins shows in \cite{robbins} that for all $m\in\mathbb{N}$, we have
\begin{equation*}
\sqrt{2\pi}m^{m+1/2}e^{-m}\cdot e^{1/(12m+1)}.<m!<\sqrt{2\pi}m^{m+1/2}e^{-m}\cdot e^{1/(12m)}.
\end{equation*}
Thus we have
\begin{align*}
\binom{n}{m}&\leq\frac{n^n}{m^m(n-m)^{n-m}}\cdot\frac{\sqrt{2\pi n}}{\sqrt{2\pi m}\cdot\sqrt{2\pi(n-m)}}e^{\frac{1}{12n}}\\
&<\left(\frac{n}{n-m}\right)^n\left(\frac{n-m}{m}\right)^m\cdot\frac{e^{1/(12n)}}{\sqrt{2\pi m\left(1-\frac{m}{n}\right)}}\\
&=\left(\frac{n}{n-m}\right)^{n-m}\left(\frac{n}{m}\right)^m\cdot\frac{e^{1/(12n)}}{\sqrt{2\pi m\left(1-\frac{m}{n}\right)}}\\
&<\left(\frac{n}{m}\right)^m\cdot\frac{e^{1+1/(12n)}}{\sqrt{2\pi m\left(1-\frac{m}{n}\right)}}.
\end{align*}
\end{proof}
\begin{lemma}\label{4nplemma}
Suppose $f:\mathbb{N}\times\mathbb{N}$ satisfies $f(n,i+1)\leq f(n,i)$ for all $i,n\in\mathbb{N}$. Let $r\in\mathbb{R}$, $r>0$ satisfy $\frac{4r}{r+1}<1$. Then for all $n\in\mathbb{N}$ and for all $\frac{4nr}{r+1}\leq t\leq n$ we have
\begin{equation*}
\sum_{i=0}^n\binom{n}{i}r^if(n,i)<\left(1-\frac{4}{5}\left(\frac{e}{3}\right)^t\right)^{-1}\sum_{i=0}^{\lfloor t\rfloor}\binom{n}{i}r^if(n,i).
\end{equation*}
\end{lemma}
\begin{proof}
First assume that $n\geq 2$ and $t<n-1$. For all $i\geq\frac{4nr}{r+1}$, we have the following:
\begin{align*}
\frac{\binom{n}{i+1}r^{i+1}}{\binom{n}{i}r^i}<\frac{(n-i)r}{i+1}<\left(\frac{n}{i}-1\right)r\leq\left(\frac{1}{4}+\frac{1}{4r}-1\right)r<\frac{1}{4}.
\end{align*}
Thus we have
\begin{align*}
\frac{\sum_{i=\lfloor t\rfloor+1}^n\binom{n}{i}r^if(n,i)}{\sum_{i=0}^n\binom{n}{i}r^if(n,i)}&=\left(1+\frac{\sum_{i=0}^{i=\lfloor t\rfloor}\binom{n}{i}r^if(n,i)}{\sum_{i=\lfloor t\rfloor+1}^n\binom{n}{i}r^if(n,i)}\right)^{-1}\\
&\leq\left(1+\frac{\sum_{i=0}^{i=\lfloor t\rfloor}\binom{n}{i}r^if\left(n,\lfloor t\rfloor\right)}{\sum_{i=\lfloor t\rfloor+1}^n\binom{n}{i}r^if(n,\lfloor t\rfloor+1)}\right)^{-1}\\
&\leq\frac{\sum_{i=\lfloor t\rfloor+1}^n\binom{n}{i}r^i}{\sum_{i=0}^n\binom{n}{i}r^i}\\
&<\frac{\binom{n}{\lfloor t\rfloor+1}r^{\lfloor t\rfloor+1}}{(1+r)^{n}}\sum_{i=0}^{\infty}\frac{1}{4^i}\\
&\leq\frac{4\binom{n}{\lfloor t\rfloor+1}r^{\lfloor t\rfloor+1}}{3(1+r)^{n}}.
\end{align*}
By Lemma \ref{combineq}, we have
\begin{align*}
\binom{n}{\lfloor t\rfloor+1}r^{\lfloor t\rfloor+1}&<\left(\frac{en}{\lfloor t\rfloor+1}\right)^{\lfloor t\rfloor+1}\cdot\frac{e^{1/(12n)}r^{\lfloor t\rfloor+1}}{\sqrt{2\pi\left(\lfloor t\rfloor+1\right)\left(1-\frac{\left(\lfloor t\rfloor+1\right)}{n}\right)}}\\
&<\left(\frac{enr}{\lfloor t\rfloor+1}\right)^{\lfloor t\rfloor+1}\cdot\frac{e^{1/(12n)}}{\sqrt{2\pi(1-1/n)}}\\
&<\left(\frac{e(r+1)}{4}\right)^t\cdot\frac{e^{1/(12n)}}{\sqrt{2\pi(1-1/n)}}
\end{align*}
with the second inequality following from $t<n-1$ and the third inequality following from $\frac{e(r+1)}{4}<\frac{e}{3}<1$. Thus we have
\begin{align*}
\frac{\sum_{i=\lfloor t\rfloor+1}^n\binom{n}{i}r^if(n,i)}{\sum_{i=0}^n\binom{n}{i}r^if(n,i)}&\leq\frac{4e^{1/(12n)}\left(\frac{e(r+1)}{4}\right)^t}{3(1+r)^{n}\sqrt{2\pi(1-1/n)}}\\
&<\frac{4e^{1/(12n)}\left(\frac{e}{3}\right)^t}{3\sqrt{\pi}}\\
&<\frac{4}{5}\left(\frac{e}{3}\right)^t.
\end{align*}
Thus
\begin{equation*}
\sum_{i=0}^n\binom{n}{i}r^if(n,i)<\left(1-\frac{4}{5}\left(\frac{e}{3}\right)^t\right)^{-1}\sum_{i=0}^{\lfloor t\rfloor}\binom{n}{i}r^if(n,i).
\end{equation*}
Now assume that $n\geq 1$ and $n-1\leq t<n$. Then we have
\begin{equation*}
\frac{\sum_{i=\lfloor t\rfloor+1}^n\binom{n}{i}r^if(n,i)}{\sum_{i=0}^n\binom{n}{i}r^if(n,i)}\leq\frac{r^n}{(1+r)^n}<\left(\frac{1}{4}\right)^n=\left(\frac{3}{4e}\right)^n\left(\frac{e}{3}\right)^n<\frac{4}{5}\left(\frac{e}{3}\right)^t.
\end{equation*}
Finally, if $t=n$ or $n=0$, then the desired result holds trivially.
\end{proof}
We will now prove Theorem $1$.
\newline
\newline
For each $n\in\mathbb{N}$, let $G(n,p)$ denote the set of all graphs on $n$ vertices with edge probability $p$, and let $P'(G(n,p))$ be the probability of a graph from $G(n,p)$ having diameter at most $d$. Let $p=\frac{r}{s}$ where $r=r(n), s=s(n)\in\mathbb{N}$. We let $A$ be the set of all graphs in $G(n,p)$, allowing for a number of duplicates of each possible graph to accommodate the edge probability $p$, so that
\begin{equation*}
|A|=\sum_{k=0}^{\binom{n}{2}}\binom{\binom{n}{2}}{ k}r^k(s-r)^{\binom{n}{2}-k}=s^{\binom{n}{2}}.
\end{equation*}
We let $B$ be all pairs of vertices so $|B|=\binom{n}{2}$. For a graph $a\in A$ and a pair of vertices $b\in B$, we say $a\sim b$ if there is no path between the pair of vertices $b$ that consists of at most $d$ edges. Thus, we will have $\omega(a)=0$ if and only if $a$ is connected with diameter at most $d$.
\newline
\newline
Pick a pair of vertices $b\in B$ and call them $v_1$ and $v_2$. To calculate $\deg b$, we need to calculate the number of graphs in $A$ such that there is no path from $v_1$ to $v_2$ that consists of at most $d$ edges. To help with this calculation, we will calculate a generalised notion of $\deg b$ as follows. Let $0\leq i_0\leq n-1$. Pick a specific set of $i_0$ vertices out of the $n$ labeled vertices, as well as another vertex, say $v$, out of the $n$ labeled vertices. We will let $C(n,r,s,d,i_0)$ denote the number of graphs in $A$ such that there is no path from any of the $i_0$ vertices to vertex $v$ that consists of at most $d$ edges. We can derive the recursive formula
\begin{align}
C(n,r,s,d+1,i_0)&=(s-r)^{i_0(n-i_0)}s^{\binom{n}{2}-i_0(n-i_0)}\nonumber\\
&\quad+\sum_{i_1=1}^{n-1-i_0}\binom{n-1-i_0}{i_1}\left(s^{i_0}-(s-r)^{i_0}\right)^{i_1}(s-r)^{i_0(n-i_0-i_1)}s^{\binom{i_0}{2}}\nonumber\\
&\qquad\qquad\qquad\cdot C(n-i_0,r,s,d,i_1)\label{eqn2a}
\end{align} 
valid for all $0\leq i_0\leq n-1$ and $d\geq 1$, which can be simplified to
\begin{align}\label{eqn2}
C(n,r,s,d+1,i_0)&=\sum_{i_1=0}^{n-1-i_0}\binom{n-1-i_0}{i_1}\left(s^{i_0}-(s-r)^{i_0}\right)^{i_1}(s-r)^{i_0(n-i_0-i_1)}s^{\binom{i_0}{2}}\nonumber\\
&\qquad\qquad\qquad\cdot C(n-i_0,r,s,d,i_1)
\end{align}
if we assume that $i_0>0$. As well,
\begin{equation*}
C(n,r,s,1,i_0)=(s-r)^{i_0}s^{\binom{n}{2}-i_0}
\end{equation*}
for all $1\leq i_0\leq n-1$, completing the formula. Then we can deduce that $C(n,r,s,d,1)=\deg b$ if we are working with diameter $d$. Let $D(n,p,d,i_0)=\frac{C(n,r,s,1,i_0)}{s^{\binom{n}{2}}}$ so that $D(n,p,d,i_0)$ is the probability that the edge distance between $v$ and any of the $i_0$ vertices is greater than $d$. We will prove that for all $0\leq i_0\leq n-1$, $0<p<1$, $d\geq 1$ that
\begin{equation}\label{lowerdegb}
D(n,p,d,i_0)\geq(1-p^d)^{i_0\left(n^{d-1}+\frac{n^{d-2}}{p}+\frac{n^{d-3}}{p^2}+\ldots+\frac{1}{p^{d-1}}\right)}.
\end{equation}
If we also have the additional constraint $1\leq i_0\leq\frac{n-1}{1+4np+(4np)^2+\ldots+(4np)^{d'}}$ where $d'\geq 0$, then we also have
\begin{equation}
D(n,p,d,i_0)<h(n,p,d,i_0)\left(1-f(n,p,d,i_0)\right)^{i_0g(n,p,d,d,i_0)}.\label{upperdegb}
\end{equation}
We prove by induction on $d$. First, we need a few lemmas.
\begin{lemma}\label{lem3}
Fix $0<r<1$ and let $g(x):=\frac{1-(1-r)^x}{rx}$. Then $g(x)$ is a decreasing function on $\mathbb{R}$. Also, Fix $y\geq 1$ and let $h(x)=\frac{1-(1-x)^y}{yx}$. Then $h(x)$ is a non-increasing function on $[0,1]$.
\end{lemma}
\begin{proof}
Fix $0<r<1$ and let $g(x):=\frac{1-(1-r)^x}{rx}$. Let $g'(x)$ be the derivative of $g$ with respect to $x$. Then
\begin{align*}
g'(x)&=\frac{xr\left((1-r)^x\log(1-r)\right)-\left(1-(1-r)^x\right)r}{x^2r^2}\\
&=\frac{x\left((1-r)^x\log(1-r)\right)-\left(1-(1-r)^x\right)}{rx^2}\\
&<\frac{(1-r)^x-1}{rx^2}\\
&<0.
\end{align*}
Therefore $g(x)$ is decreasing. Let $h(x):=\frac{1-(1-x)^y}{yx}$. Let $h'(x)$ be the derivative of $h$ with respect to $x$. Then
\begin{align*}
h'(x)&=\frac{yx(1-x)^{y-1}-y\left(1-(1-x)^y\right)}{y^2x^2}\\
&=\frac{(1-x)^{y-1}-1}{yx^2}\\
&\leq 0.
\end{align*}
Thus $h(x)$ is a non-increasing function on $[0,1]$.
\end{proof}
\begin{lemma}\label{lem2}
Let $0\leq q,r\leq 1$ and $y\geq 1$. We have
\begin{equation*}
(1-qr)^y\leq 1-q+q(1-r)^y.
\end{equation*}
Also, if $C<\frac{1-(1-r)^M}{Mr}$ where $y<M$, then we also have
\begin{equation*}
1-q+q(1-r)^y\leq(1-Cqr)^y.
\end{equation*}
\end{lemma}
\begin{proof}
We observe that the lemma holds for $r=0$. Fix $0\leq q\leq 1$ and $y\geq 1$ and let $f(r)=1-q+q(1-r)^y-(1-qr)^y$. Let $f'(r)$ be the derivative of $f$ with respect to $r$. Then
\begin{equation*}
f'(r)=-qy(1-r)^{y-1}+qy(1-qr)^{y-1}=qy((1-qr)^{y-1}-(1-r)^{y-1})\geq 0.
\end{equation*}
It follows that $f(r)\geq 0$ for all $0\leq r\leq 1$ and so the first result follows. Let $C<\frac{1-(1-r)^M}{Mr}$ where $y\leq M$. From Lemma \ref{lem3}, we therefore have
\begin{align*}
C&<\frac{1-(1-r)^y}{yr}\\
&=\frac{1-\left(1-\frac{q}{y}+\frac{q}{y}(1-r)^y\right)}{qr}\\
&\leq\frac{1-\left(1-q+q(1-r)^y\right)^{1/y}}{qr}
\end{align*}
Thus
\begin{equation*}
1-Cqr>\left(1-q+q(1-r)^y\right)^{1/y}
\end{equation*}
from which the result follows.
\end{proof}
\begin{lemma}\label{lem1}
Let $i,j\in\mathbb{N}\cup\{0\}$, $0<p<1$, and $t\geq 1$. Then we have
\begin{equation*}
(1-p^{j+1})^{it}\leq(1-p)^i+\left(1-(1-p)^i\right)(1-p^j)^t.
\end{equation*}
Also, let $0<C_1<1$, $C_2<\frac{1-(1-p^j)^M}{Mp^j}$ and $C_3<\frac{1-(1-p)^N}{Np}$ where $M\geq t$ and $N\geq i$. Then we also have
\begin{equation*}
(1-p)^i+\left(1-(1-p)^i\right)(1-C_1p^j)^t\leq(1-C_1C_2C_3p^{j+1})^{it}.
\end{equation*}
\end{lemma}
\begin{proof}
Let $i,j\in\mathbb{N}\cup\{0\}$, $0<p<1$, and $t\geq 1$. Applying Lemma \ref{lem2} twice we have
\begin{align*}
(1-p)^i+\left(1-(1-p)^i\right)(1-p^j)^t&=\left(1-\left(1-(1-p^i)\right)\right)+\left(1-(1-p)^i\right)(1-p^j)^t\\
&\geq\left(1-p^j\left(1-(1-p)^i\right)\right)^t\\
&=\left(1-p^j+p^j(1-p)^i\right)^t\\
&\geq(1-p^{j+1})^{it}.
\end{align*}
By Lemma \ref{lem3} we have
\begin{equation*}
C_2<\frac{1-(1-p^j)^M}{Mp^j}<\frac{1-(1-C_1p^j)^M}{MC_1p^j}.
\end{equation*}
Thus, applying Lemma \ref{lem2} twice, we have
\begin{align*}
(1-p)^i+\left(1-(1-p)^i\right)(1-C_1p^j)^t&=\left(1-\left(1-(1-p^i)\right)\right)+\left(1-(1-p)^i\right)(1-C_1p^j)^t\\
&\leq\left(1-C_2C_1p^j\left(1-(1-p)^i\right)\right)^t\\
&=\left(1-C_2C_1p^j+C_2C_1p^j(1-p)^i\right)^t\\
&\leq(1-C_3C_2C_1p^{j+1})^{it}.
\end{align*}
\end{proof}
For $d=1$, we have $D(n,p,1,i_0)=(1-p)^{i_0}$. Suppose for some $d\geq 1$ \eqref{lowerdegb} holds for all $0\leq i_0\leq n-1$, and $0<p<1$. We will prove it holds for $d+1$. First, we can verify that \eqref{lowerdegb} holds if $i_0=0$ (in which case both sides of \eqref{lowerdegb} are just equal to $1$), so assume that $i_0>0$. From \eqref{eqn2} we have
\begin{align*}
D(n,p,d+1,i_0)&=(1-p)^{i_0(n-i_0)}\sum_{i_1=0}^{n-1-i_0}\binom{n-1-i_0}{i_1}((1-p)^{-i_0}-1)^{i_1}D(n-i_0,p,d,i_1)\\
&>(1-p)^{i_0(n-i_0)}\sum_{i_1=0}^{n-1-i_0}\binom{n-1-i_0}{i_1}((1-p)^{-i_0}-1)^{i_1}(1-p^d)^{i_1\left(n^{d-1}+\frac{n^{d-2}}{p}+\frac{n^{d-3}}{p^2}+\ldots+\frac{1}{p^{d-1}}\right)}\\
&=(1-p)^{i_0(n-i_0)}\left(1+\left((1-p)^{-i_0}-1\right)(1-p^d)^{n^{d-1}+\frac{n^{d-2}}{p}+\frac{n^{d-3}}{p^2}+\ldots+\frac{1}{p^{d-1}}}\right)^{n-1-i_0}\\
&=(1-p)^{i_0}\left((1-p)^{i_0}+\left(1-(1-p)^{i_0}\right)(1-p^d)^{n^{d-1}+\frac{n^{d-2}}{p}+\frac{n^{d-3}}{p^2}+\ldots+\frac{1}{p^{d-1}}}\right)^{n-1-i_0}\\
\end{align*}
Using Lemma \ref{lem1} we thus have
\begin{align*}
D(n,p,d+1,i_0)&>(1-p)^{i_0}(1-p^{d+1})^{i_0(n-1-i_0)\left(n^{d-1}+\frac{n^{d-2}}{p}+\frac{n^{d-3}}{p^2}+\ldots+\frac{1}{p^{d-1}}\right)}\\
&=\frac{(1-p^{d+1})^{i_0}}{(1+p+p^2+\ldots+p^d)^{i_0}}\cdot(1-p^{d+1})^{i_0(n-1-i_0)\left(n^{d-1}+\frac{n^{d-2}}{p}+\frac{n^{d-3}}{p^2}+\ldots+\frac{1}{p^{d-1}}\right)}\\
&>(1-p^{d+1})^{i_0}e^{(-p-p^2-\ldots-p^d)i_0}(1-p^{d+1})^{i_0(n-1-i_0)\left(n^{d-1}+\frac{n^{d-2}}{p}+\frac{n^{d-3}}{p^2}+\ldots+\frac{1}{p^{d-1}}\right)}\\
&>(1-p^{d+1})^{(1+p^{-1}+p^{-2}+p^{-3}+\ldots+p^{-d})i_0}(1-p^{d+1})^{i_0(n-1)\left(n^{d-1}+\frac{n^{d-2}}{p}+\frac{n^{d-3}}{p^2}+\ldots+\frac{1}{p^{d-1}}\right)}\\
&>(1-p^{d+1})^{i_0\left(n^d+\frac{n^{d-1}}{p}+\frac{n^{d-2}}{p^2}+\ldots+\frac{1}{p^d}\right)}.
\end{align*}
Thus \eqref{lowerdegb} is proved. Next we prove \eqref{upperdegb} again by induction on $d$. First, applying Lemma \ref{lem1} we have
\begin{align*}
D(n,p,2,i_0)&=(1-p)^{i_0}(1-p+p(1-p)^{i_0})^{n-1-i_0}\\
&<(1-p+p(1-p)^{i_0})^{n-1}\\
&<\left(1-\frac{p\left(1-(1-p)^{i_0}\right)}{i_0}\right)^{i_0(n-1)}.
\end{align*}
Suppose for some $d\geq 2$ with any $d'\geq 0$ \eqref{upperdegb} holds for all $1+4np+(4np)^2+\ldots+(4np)^{d'}\leq n-2$, and $0<p<1$. We will prove it holds for $d+1$. We have
\begin{equation*}
D(n,p,d+1,i_0)=(1-p)^{i_0(n-i_0)}\sum_{i_1=0}^{n-1-i_0}\binom{n-1-i_0}{i_1}((1-p)^{-i_0}-1)^{i_1}D(n-i_0,p,d,i_1).
\end{equation*}
We divide into three cases.
\begin{case}{$\frac{n-1}{1+4np}<i_0\leq n-1$.}
\normalfont
\newline
\newline
We have the following:
\begin{align*}
D(n,p,d+1,i_0)&=(1-p)^{i_0(n-i_0)}\sum_{i_1=0}^{n-1-i_0}\binom{n-1-i_0}{i_1}((1-p)^{-i_0}-1)^{i_1}D(n-i_0,p,d,i_1)\\
&\leq(1-p)^{i_0(n-i_0)}\left(1+\sum_{i_1=1}^{n-1-i_0}\binom{n-1-i_0}{i_1}((1-p)^{-i_0}-1)^{i_1}\right.\\
&\qquad\qquad\qquad\qquad\qquad\qquad\qquad\left.\left(1-f(n-i_0,p,d,i_1)\right)^{i_1g(n-i_0,p,d,0,i_1)}h(n-i_0,p,d,i_1)\right).
\end{align*}
We can deduce that $h(n-i_0,p,d,i_1)\leq h(n-i_0,p,d,4npi_0)\leq h(n,p,d+1,i_0)$ and from Lemma \ref{lem3}, we can deduce that $f(n,p,d,4npi_0)<f(n-i_0,p,d,i_1)$. As well, $g(n-i_0,p,d,n-1-i_0)\leq g(n-i_0,p,d,i_1)$. Thus we have
\begin{align*}
D(n,p,d+1,i_0)&<h(n,p,d+1,i_0)(1-p)^{i_0(n-i_0)}\left(1+\sum_{i_1=1}^{n-1-i_0}\binom{n-1-i_0}{i_1}((1-p)^{-i_0}-1)^{i_1}\right.\\
&\qquad\qquad\qquad\qquad\qquad\qquad\qquad\qquad\qquad\qquad\left.\left(1-f(n,p,d,4npi_0)\right)^{i_1g(n-i_0,p,d,0,n-1-i_0)}\right)\\
&=h(n,p,d+1,i_0)(1-p)^{i_0(n-i_0)}\\
&\quad\cdot\left(1+((1-p)^{-i_0}-1)\left(1-f(n,p,d,4npi_0)\right)^{g(n-i_0,p,d,0,n-1-i_0)}\right)^{n-1-i_0}\\
&<(1-p)^{i_0}h(n,p,d+1,i_0)\\
&\quad\cdot\left((1-p)^{i_0}+(1-(1-p)^{i_0})\left(1-f(n,p,d,4npi_0)\right)^{g(n-i_0,p,d,0,n-1-i_0)}\right)^{n-1-i_0}.
\end{align*}
We note that $g(n-i_0,p,d,0,n-1-i_0)<n^{d-1}$ and so, using Lemma \ref{lem1}, we thus have
\begin{align*}
&\quad D(n,p,d+1,i_0)\\
&<h(n,p,d+1,i_0)\\
&\quad\cdot\left(1-p\left(\frac{1-(1-p)^{i_0}}{pi_0}\right)\left(\frac{1-(1-p^d)^{n^{d-1}}}{n^{d-1}p^d}\right)f(n,p,d,4npi_0)\right)^{i_0(n-i_0-1)g(n-i_0,p,d,0,n-1-i_0)+i_0}\\
&=h(n,p,d+1,i_0)\left(1-f(n,p,d+1,i_0)\right)^{i_0(n-i_0-1)g(n-i_0,p,d,0,n-1-i_0)+i_0}.
\end{align*}
We can deduce that $(n-i_0-1)g(n-i_0,p,d,0,n-1-i_0)+1\geq g(n,p,d+1,0,i_0)$ and so we have \eqref{upperdegb}. 
\end{case}
\begin{case}{$i_0\leq\frac{n-1}{1+4np}$}
\normalfont
\newline
\newline
Given a set of $i_1$ vertices and one additional vertex, say $v$, in a graph from $G(n-i_0,p)$, we know that $D(n-i_0,p,d,i_1)$ is the probability that the edge distance between $v$ and any of the $i_1$ vertices is greater than $d$. By adding one more vertex to our set of $i_1$ vertices, it therefore follows that $D(n-i_0,p,d,i_1+1)\leq D(n-i_0,p,d,i_1)$. Thus, by Lemma \ref{4nplemma}, we have
\begin{align*}
D(n,p,d+1,i_0)&<\left(1-\frac{4}{5}\left(\frac{e}{3}\right)^{4npi_0}\right)^{-1}(1-p)^{i_0(n-i_0)}\sum_{i_1=0}^{4npi_0}\binom{n-1-i_0}{i_1}((1-p)^{-i_0}-1)^{i_1}D(n-i_0,p,d,i_1)\\
&<\left(1-\frac{4}{5}\left(\frac{e}{3}\right)^{4npi_0}\right)^{-1}(1-p)^{i_0(n-i_0)}\\
&\quad\cdot\left(1+\sum_{i_1=1}^{4npi_0}\binom{n-1-i_0}{i_1}((1-p)^{-i_0}-1)^{i_1}\right.\\
&\qquad\qquad\qquad\qquad\qquad\cdot\left.\left(1-f(n-i_0,p,d,i_1)\right)^{i_1g(n-i_0,p,d,0,i_1)}h(n-i_0,p,d,i_1)\right).
\end{align*}
We can deduce that $h(n-i_0,p,d,i_1)\leq h(n-i_0,p,d,4npi_0)$ and from Lemma \ref{lem3}, we can deduce that $f(n,p,d,4npi_0)<f(n-i_0,p,d,i_1)$. As well, $g(n-i_0,p,d,0,n-1-i_0)\leq g(n-i_0,p,d,0,i_1)$. Thus we have
\begin{align*}
D(n,p,d+1,i_0)&<\left(1-\frac{4}{5}\left(\frac{e}{3}\right)^{4npi_0}\right)^{-1}(1-p)^{i_0(n-i_0)}\\
&\quad\cdot\left(1+\sum_{i_1=1}^{4npi_0}\binom{n-1-i_0}{i_1}((1-p)^{-i_0}-1)^{i_1}\right.\\
&\left.\qquad\qquad\qquad\qquad\qquad\cdot\left(1-f(n,p,d,4npi_0)\right)^{i_1g(n-i_0,p,d,0,n-1-i_0)}h(n-i_0,p,d,4npi_0)\right)\\
&<\left(1-\frac{4}{5}\left(\frac{e}{3}\right)^{4npi_0}\right)^{-1}h(n-i_0,p,d,4npi_0)(1-p)^{i_0(n-i_0)}\\
&\quad\cdot\left(1+\sum_{i_1=1}^{n-1-i_0}\binom{n-1-i_0}{i_1}((1-p)^{-i_0}-1)^{i_1}\left(1-f(n,p,d,4npi_0)\right)^{i_1g(n-i_0,p,d,0,n-1-i_0)}\right)\\
&=h(n,p,d+1,i_0)(1-p)^{i_0(n-i_0)}\\
&\quad\cdot\left(1+\left((1-p)^{-i_0}-1\right)\left(1-f(n,p,d,4npi_0)\right)^{g(n-i_0,p,d,0,n-1-i_0)}\right)^{n-1-i_0}\\
&<h(n,p,d+1,i_0)(1-p)^{i_0}\\
&\quad\cdot\left((1-p)^{i_0}+\left(1-(1-p)^{i_0}\right)\left(1-f(n,p,d,4npi_0)\right)^{g(n-i_0,p,d,0,n-1-i_0)}\right)^{n-1-i_0}.
\end{align*}
We note that $g(n-i_0,p,d,0,n-1-i_0)<n^{d-1}$ and so, using Lemma \ref{lem1}, we thus have
\begin{align*}
&\quad D(n,p,d+1,i_0)\\
&<h(n,p,d+1,i_0)\\
&\quad\cdot\left(1-p\left(\frac{1-(1-p)^{i_0}}{pi_0}\right)\left(\frac{1-(1-p^d)^{n^{d-1}}}{n^{d-1}p^d}\right)f(n,p,d,4npi_0)\right)^{i_0(n-i_0-1)g(n-i_0,p,d,0,n-1-i_0)+i_0}\\
&=h(n,p,d+1,i_0)\left(1-f(n,p,d+1,i_0)\right)^{i_0(n-i_0-1)g(n-i_0,p,d,0,n-1-i_0)+i_0}.
\end{align*}
We can deduce that $(n-i_0-1)g(n-i_0,p,d,0,n-1-i_0)+1\geq g(n,p,d+1,0,i_0)$ and so we have \eqref{upperdegb}.
\end{case}
\begin{case}{$i_0\leq\frac{n-1}{1+4np+\ldots+(4np)^{d'+1}}$, $d'\geq 0$}
\normalfont
\newline
\newline
Given a set of $i_1$ vertices and one additional vertex, say $v$, in a graph from $G(n-i_0,p)$, we know that $D(n-i_0,p,d,i_1)$ is the probability that the edge distance between $v$ and any of the $i_1$ vertices is greater than $d$. By adding one more vertex to our set of $i_1$ vertices, it therefore follows that $D(n-i_0,p,d,i_1+1)\leq D(n-i_0,p,d,i_1)$. Thus, by Lemma \ref{4nplemma}, we have
\begin{align*}
D(n,p,d+1,i_0)&<\left(1-\frac{4}{5}\left(\frac{e}{3}\right)^{4npi_0}\right)^{-1}(1-p)^{i_0(n-i_0)}\sum_{i_1=0}^{4npi_0}\binom{n-1-i_0}{i_1}((1-p)^{-i_0}-1)^{i_1}D(n-i_0,p,d,i_1)\\
&<\left(1-\frac{4}{5}\left(\frac{e}{3}\right)^{4npi_0}\right)^{-1}(1-p)^{i_0(n-i_0)}\\
&\quad\cdot\left(1+\sum_{i_1=1}^{4npi_0}\binom{n-1-i_0}{i_1}((1-p)^{-i_0}-1)^{i_1}\right.\\
&\qquad\qquad\qquad\qquad\qquad\cdot\left.\left(1-f(n-i_0,p,d,i_1)\right)^{i_1g(n-i_0,p,d,d',i_1)}h(n-i_0,p,d,i_1)\right).
\end{align*}
We can deduce that $h(n-i_0,p,d,i_1)\leq h(n-i_0,p,d,4npi_0)$ and from Lemma \ref{lem3}, we can deduce that $f(n,p,d,4npi_0)<f(n-i_0,p,d,i_1)$. As well, $g(n-i_0,p,d,4npi_0)\leq g(n-i_0,p,d,i_1)$. Thus we have
\begin{align*}
&\quad D(n,p,d+1,i_0)\\
&<\left(1-\frac{4}{5}\left(\frac{e}{3}\right)^{4npi_0}\right)^{-1}(1-p)^{i_0(n-i_0)}\\
&\quad\cdot\left(1+\sum_{i_1=1}^{4npi_0}\binom{n-1-i_0}{i_1}((1-p)^{-i_0}-1)^{i_1}\right.\\
&\left.\qquad\qquad\qquad\qquad\qquad\cdot\left(1-f(n,p,d,4npi_0)\right)^{i_1g(n-i_0,p,d,d',4npi_0)}h(n-i_0,p,d,4npi_0)\right)\\
&<\left(1-\frac{4}{5}\left(\frac{e}{3}\right)^{4npi_0}\right)^{-1}h(n-i_0,p,d,4npi_0)(1-p)^{i_0(n-i_0)}\\
&\quad\cdot\left(1+\sum_{i_1=1}^{n-1-i_0}\binom{n-1-i_0}{i_1}((1-p)^{-i_0}-1)^{i_1}\left(1-f(n,p,d,4npi_0)\right)^{i_1g(n-i_0,p,d,d',4npi_0)}\right)\\
&=h(n,p,d+1,i_0)(1-p)^{i_0(n-i_0)}\left(1+\left((1-p)^{-i_0}-1\right)\left(1-f(n,p,d,4npi_0)\right)^{g(n-i_0,p,d,d',4npi_0)}\right)^{n-1-i_0}\\
&<h(n,p,d+1,i_0)(1-p)^{i_0}\left((1-p)^{i_0}+\left(1-(1-p)^{i_0}\right)\left(1-f(n,p,d,4npi_0)\right)^{g(n-i_0,p,d,d',4npi_0)}\right)^{n-i_0-1}.
\end{align*}
We note that $g(n-i_0,p,d,d',4npi_0)<n^{d-1}$ and so, using Lemma \ref{lem1}, we thus have
\begin{align*}
D(n,p,d+1,i_0)&<h(n,p,d+1,i_0)\\
&\quad\cdot\left(1-p\left(\frac{1-(1-p)^{i_0}}{pi_0}\right)\left(\frac{1-(1-p^d)^{n^{d-1}}}{n^{d-1}p^d}\right)f(n,p,d,4npi_0)\right)^{i_0(n-i_0)g(n-i_0,p,d,d',4npi_0)+i_0}\\
&=h(n,p,d+1,i_0)\left(1-f(n,p,d+1,i_0)\right)^{i_0(n-i_0)g(n-i_0,p,d,d',4npi_0)+i_0}.
\end{align*}
We can deduce that $(n-i_0-1)g(n-i_0,p,d,d',4npi_0)+1>g(n,p,d+1,d'+1,i_0)$ and so we have \eqref{upperdegb}.
\end{case}
By \eqref{upperdegb}, we have
\begin{equation*}
\sum_{b\in B}\deg b<s^{\binom{n}{2}}\binom{n}{2}h(n,p,d,1)\left(1-f(n,p,d,1)\right)^{g(n,p,d,d',1)}.
\end{equation*}
Hence, by the simple sieve, we have
\begin{align*}
P(G(n,p),d)&>1-\binom{n}{2}h(n,p,d,1)\left(1-f(n,p,d,1)\right)^{g(n,p,d,d',1)}.
\end{align*}
We now calculate $n(b_1,b_2)$ to get an upperbound for $\sum_{i=1}^{d}P(G(n,p),i)$ using the Tur\'an sieve. If the two pairs of vertices $b_1$ and $b_2$ are the same, then we just have $n(b_1,b_2)=\deg b$. If $b_1$ and $b_2$ have exactly one vertex in common, then we can see that $n(b_1,b_2)=C(n,r,s,d,2)$ and use \eqref{upperdegb}. Hence the only question is when the two pairs of vertices are disjoint.
\newline
\newline
As in our calculations for $\deg b$, to help calculate $n(b_1,b_2)$ in this case, we will calculate a generalised notion of $n(b_1,b_2)$ as follows. Let $0\leq i_0\leq n-2$ and $0\leq i_0'\leq n-2$ where $i_0+i_0'\leq n-2$. Pick two disjoint sets of vertices having $i_0$ and $i_0'$ vertices out of the $n$ labeled vertices, as well as two other vertices, say $v$ and $v'$, out of the $n$ labeled vertices. We will let $C'(n,r,s,d,i_0,i_0')$ denote the number of graphs in $A$ such that there is no path from any of the $i_0$ vertices to vertex $v$ that consists of at most $d$ edges, as well as the requirement that there is no path from any of the $i_0'$ vertices to the vertex $v'$ that consists of at most $d$ edges. If $i_0=0$, then we have $C'(n,r,s,d,i_0,i_0')=C(n,r,s,1,i_0')$ and if $i_0'=0$, then we have $C'(n,r,s,d,i_0,i_0')=C(n,r,s,1,i_0)$. So suppose that $i_0,i_0'>0$. Then we have
\begin{align}
C'(n,r,s,d+1,i_0,i_0')&<\sum_{i_1=0}^{n-2-i_0-i_0'}\binom{n-2-i_0-i_0'}{i_1}\left(s^{i_0}-(s-r)^{i_0}\right)^{i_1}(s-r)^{i_0(n-i_0-i_0'-i_1-1)}\nonumber\\
&\quad\cdot\sum_{i_1'=0}^{n-2-i_0-i_0'-i_1}\binom{n-2-i_0-i_0'-i_1}{i_1'}\left(s^{i_0'}-(s-r)^{i_0'}\right)^{i_1'}(s-r)^{i_0'(n-i_0-i_0'-i_1-i_1'-1)}\nonumber\\
&\quad\cdot s^{\binom{i_0}{2}+i_0i_0'+\binom{i_0'}{2}+i_0+i_1i_0'+i_0'}C'(n-i_0-i_0',r,s,d,i_1,i_1').\label{eqn3a}
\end{align} 
valid for all $1\leq i_0,i_0'\leq n-3$ with $i_0+i_0\leq n-2$, and $d\geq 1$. As well,
\begin{equation*}
C'(n,r,s,1,i_0,i_0')=(s-r)^{i_0+i_0'}s^{\binom{n}{2}-i_0-i_0'}
\end{equation*}
for all $0\leq i_0,i_0'\leq n-2$ with $i_0+i_0'\leq n-2$, completing the formula. Then we can deduce that $C(n,r,s,d,1,1)=n(b_1,b_2)$ if we are working with diameter $d$. Let $D'(n,p,d,i_0,i_0')=\frac{C'(n,r,s,1,i_0,i_0')}{s^{\binom{n}{2}}}$ so that $D'(n,p,d,i_0,i_0')$ is the probability that the edge distance between $v$ and any of the $i_0$ vertices is greater than $d$ and that the edge distance between $v'$ and any of the $i_0'$ vertices is greater than $d$. We will prove that for all $0\leq i_0,i_0'\leq n-2$, $i_0+i_0'\leq n-2$, $0<p<1$, $d\geq 1$ that
\begin{equation}\label{D'D}
D'(n,p,d,i_0,i_0')\leq D(n-1,p,d,i_0+i_0').
\end{equation}
For $d=1$, we have
\begin{align*}
D'(n,p,1,i_0,i_0')=(1-p)^{i_0+i_0'}=D(n-1,p,1,i_0+i_0')
\end{align*}
so \eqref{D'D} holds for $d=1$. Suppose for some $d\geq 1$ \eqref{D'D} holds for all $n\in\mathbb{N}$, $0\leq i_0,i_0'\leq n-2$, $i_0+i_0'\leq n-2$, $0<p<1$. We can see that \eqref{D'D} holds if $i_0=0$ or $i_0'=0$. So assume that $0<i_0,i_0'\leq n-3$ with $i_0+i_0'\leq n-2$. First we have
\begin{align*}
D'(n,p,d+1,i_0,i_0')&<\sum_{i_1=0}^{n-2-i_0-i_0'}\binom{n-2-i_0-i_0'}{i_1}\left(1-(1-p)^{i_0}\right)^{i_1}(1-p)^{i_0(n-i_0-i_0'-i_1-1)}\\
&\quad\cdot\sum_{i_1'=0}^{n-2-i_0-i_0'-i_1}\binom{n-2-i_0-i_0'-i_1}{i_1'}\left(1-(1-p)^{i_0'}\right)^{i_1'}(1-p)^{i_0'(n-i_0-i_0'-i_1-i_1'-1)}\\
&\quad\cdot D'(n-i_0-i_0',p,d,i_1,i_1')\\
&<(1-p)^{i_0(n-i_0-i_0'-1)}\sum_{i_1=0}^{n-2-i_0-i_0'}\binom{n-2-i_0-i_0'}{i_1}\left((1-p)^{-i_0}-1\right)^{i_1}\\
&\quad\cdot(1-p)^{i_0'(n-i_0-i_0'-i_1-1)}\sum_{i_1'=0}^{n-2-i_0-i_0'-i_1}\binom{n-2-i_0-i_0'-i_1}{i_1'}\left((1-p)^{-i_0'}-1\right)^{i_1'}\\
&\quad\cdot D(n-1-i_0-i_0',p,d,i_1+i_1').
\end{align*}
Writing $k=i_1+i_1'$, we have
\begin{align*}
D'(n,p,d+1,i_0,i_0')&<(1-p)^{(i_0+i_0')(n-i_0-i_0'-1)}\sum_{k=0}^{n-2-i_0-i_0'}\binom{n-2-i_0-i_0'}{k}D(n-1-i_0-i_0',p,d,k)\\
&\quad\cdot\left((1-p)^{-i_0'}-1\right)^{k}\sum_{i_1=0}^k\binom{k}{i_1}\left((1-p)^{-i_0}-1\right)^{i_1}(1-p)^{-i_1i_0'}\left((1-p)^{-i_0'}-1\right)^{-i_1}\\
&=(1-p)^{(i_0+i_0')(n-i_0-i_0'-1)}\sum_{k=0}^{n-2-i_0-i_0'}\binom{n-2-i_0-i_0'}{k}D(n-1-i_0-i_0',p,d,k)\\
&\quad\cdot\left((1-p)^{-i_0'}-1\right)^{k}\sum_{i_1=0}^k\binom{k}{i_1}\left(\frac{(1-p)^{-i_0-i_0'}-(1-p)^{-i_0'}}{(1-p)^{-i_0'}-1}\right)^{i_1}\\
&=(1-p)^{(i_0+i_0')(n-i_0-i_0'-1)}\sum_{k=0}^{n-2-i_0-i_0'}\binom{n-2-i_0-i_0'}{k}D(n-1-i_0-i_0',p,d,k)\\
&\quad\cdot\left((1-p)^{-i_0'}-1\right)^{k}\left(1+\frac{(1-p)^{-i_0-i_0'}-(1-p)^{-i_0'}}{(1-p)^{-i_0'}-1}\right)^{k}\\
&=(1-p)^{(i_0+i_0')(n-i_0-i_0'-1)}\sum_{k=0}^{n-2-i_0-i_0'}\binom{n-2-i_0-i_0'}{k}D(n-1-i_0-i_0',p,d,k)\\
&\quad\cdot\left((1-p)^{-i_0-i_0'}-1\right)^{k}\\
&<D(n-1,p,d+1,i_0+i_0').
\end{align*}
Suppose we have $n-1$ labeled vertices. Pick $i_0$ of these vertices where $0\leq i_0\leq n-2$ and another vertex $v$ among the $n-1$ vertices. The number of graphs from $G(n-1,p)$ on these $n-1$ vertices such that there is no path from any of the $i_0$ vertices to vertex $v$ that consists of at most $d$ edges is $C(n-1,r,s,d,i_0)$ where $\frac{r}{s}$ is the edge probability. Adding one more vertex to these $n$ labeled vertices, we can see that $s^{n-1}C(n-1,r,s,d,i_0)\geq C(n,r,s,d,i_0)$. We deduce that $D(n-1,p,d,i_0)\geq D(n,p,d,i_0)$. Thus we have $n(b_1,b_2)\leq s^{\binom{n}{2}}D(n-1,p,d,2)$ whenever $b_1$ and $b_2$ are not the same pair of vertices and hence we can use \eqref{upperdegb} to get an upper bound. Thus, by the Tur\'an sieve, we have
\begin{align*}
P(G(n,p),d)&<\frac{\binom{n}{2}^2D(n-1,p,d,2)+\binom{n}{2}D(n,p,d,1)}{\binom{n}{2}^2D(n,p,d,1)^2}-1\\
&=\frac{D(n-1,p,d,2)}{D(n,p,d,1)^2}-1+\frac{1}{\binom{n}{2}D(n,p,d,1)}\\
&<{(1-p^d)^{-2\left(n^{d-1}+\frac{n^{d-2}}{p}+\frac{n^{d-3}}{p^2}+\ldots+\frac{1}{p^{d-1}}\right)}}h(n-1,p,d,2)\left(1-f(n-1,p,d,2)\right)^{2\cdot g(n-1,p,d,d',2)}\\
&\quad-1+\frac{1}{\binom{n}{2}(1-p^d)^{\left(n^{d-1}+\frac{n^{d-2}}{p}+\frac{n^{d-3}}{p^2}+\ldots+\frac{1}{p^{d-1}}\right)}}.
\end{align*}
\section{Restricted Results for Diameter $d\geq 3$}
Here we impose further restrictions on $n$ and $p$ in Theorem \ref{bigthm} to make our result more clear and meaningful. Since the case $d=2$ was treated in Section $2$, we assume $d\geq 3$.The result is Corollary \ref{bigcor}.
\begin{corollary}\label{bigcor}
Let $d\geq 3$ be fixed. Suppose that
\begin{equation}\label{cond1}
n^{\frac{1}{d}-1}\leq p\leq n^{\frac{1}{d}+\frac{1}{2d^2}-1}.
\end{equation}
Also suppose that
\begin{equation}\label{cond2}
(4^dd)^{2d^2}<n.
\end{equation}
Then we have
\begin{equation*}
P(G(n,p),d)>1-\binom{n}{2}(1-p^d)^{n^{d-1}}\left(1+4^{d+1}dn^{\frac{-1}{2d^2}}\right)
\end{equation*}
and
\begin{equation*}
P(G(n,p),d)<\frac{2(1-p^d)^{-n^{d-1}}\left(1+2n^{\frac{-1}{2d^2}}\right)}{n(n-1)}+4^{d+2}dn^{\frac{-1}{2d^2}}.
\end{equation*}
\end{corollary}
We prove Corollary \ref{bigcor}. Suppose that \eqref{cond1} and \eqref{cond2} hold. From \eqref{cond1} and \eqref{cond2}, we have $2<n^{1/d}\leq np$. From \eqref{cond2}, we can derive that $4d\left(\frac{e}{3}\right)^{3n^{1/d}}<4^ddn^{\frac{-1}{2d^2}}<1$. Also, from \eqref{cond1} and \eqref{cond2}, we can deduce that $\frac{1}{n}+\frac{16(4np)^{d-4}}{7n}<\frac{1}{4}$. Thus
\begin{align}
h(n,p,d,1),h(n-1,p,d,2)&<\left(1-\frac{4}{5}\left(\frac{e}{3}\right)^{4\left(n-1-\frac{16(4np)^{d-4}}{7}\right)p}\right)^{2-d}\nonumber\\
&<\left(1-\frac{4}{5}\left(\frac{e}{3}\right)^{4n^{1/d}\left(1-\frac{1}{4}\right)}\right)^{-d}\nonumber\\
&=\left(1-\frac{4}{5}\left(\frac{e}{3}\right)^{3n^{1/d}}\right)^{-d}\nonumber\\
&<1+4d\left(\frac{e}{3}\right)^{3n^{1/d}}\nonumber\\
&<1+4^ddn^{\frac{-1}{2d^2}}.\label{eqn1}
\end{align}
with the last two inequalities following from \eqref{cond2}. Also, from \eqref{cond1}, we have
\begin{equation*}
p^{d-1}n^{d-2}\leq\left(n^{\frac{1}{d}+\frac{1}{2d^2}-1}\right)^{d-1}n^{d-2}=n^{\frac{-1}{2d}-\frac{1}{2d^2}}.
\end{equation*}
Also, from \eqref{cond2}, we have
\begin{align*}
4^{d-1}dn^{\frac{-1}{2d}-\frac{1}{2d^2}}<4^{d-1}d(4^dd)^{-1}=\frac{1}{4}.
\end{align*}
Thus, from \eqref{cond1}, we have that both $1-f(n,p,d,1)$ and $1-f(n,p,d,2)$ are bounded above by
\begin{align}
&\quad 1-p\prod_{i=0}^{d-2}\left(\frac{1-(1-p)^{2(4np)^i}}{2(4np)^i}\right)\prod_{j=1}^{d-2}\left(\frac{1-(1-p^{j+1})^{n^j}}{n^jp^{j+1}}\right)\nonumber\\
&<1-p\prod_{i=0}^{d-2}\left(\frac{2p(4np)^i-4p^2(4np)^{2i}}{2(4np)^i}\right)\prod_{j=1}^{d-2}\left(\frac{n^jp^{j+1}-n^{2j}p^{2j+2}}{n^jp^{j+1}}\right)\nonumber\\
&=1-p^d\prod_{i=0}^{d-2}\left(1-2p(4np)^i\right)\prod_{j=1}^{d-2}\left(1-n^jp^{j+1}\right)\nonumber\\
&<1-p^d\left(1-2p(4np)^{d-2}\right)^{2d-3}\nonumber\\
&<1-p^d\left(1-4^{d-1}(d-1)p^{d-1}n^{d-2}\right)\nonumber\\
&\leq 1-p^d\left(1-4^{d-1}(d-1)n^{\frac{-1}{2d}-\frac{1}{2d^2}}\right)\nonumber\\
&<(1-p^d)^{\left(1-4^{d-1}(d-1)n^{\frac{-1}{2d}-\frac{1}{2d^2}}\right)\left(1-\frac{p^d}{4}\right)}\nonumber\\
&<(1-p^d)^{\left(1-4^{d-1}(d-1)n^{\frac{-1}{2d}-\frac{1}{2d^2}}\right)\left(1-\frac{n^{\frac{-1}{2d}-\frac{1}{2d^2}}}{4}\right)}\nonumber\\
&<(1-p^d)^{\left(1-4^{d-1}dn^{\frac{-1}{2d}-\frac{1}{2d^2}}\right)}.\label{eqn3}
\end{align}
From \eqref{cond1} and \eqref{cond2}, we can derive
\begin{align*}
\frac{2(4np)^{d-1}}{1-\frac{1}{4np}}\leq n-2.
\end{align*}
Thus we have
\begin{align}
g(n,p,d,d-1,1)&>\left(n-1-\frac{(4np)^{d-3}}{1-\frac{1}{4np}}\right)^{d-1}\nonumber\\
&>\left(n-1-\frac{8(4np)^{d-3}}{7}\right)^{d-1}\nonumber\\
&>\left(n-1-\frac{4^dn^{1-\frac{1}{2d}-\frac{1}{2d^2}}}{56}\right)^{d-1}\nonumber\\
&>n^{d-1}\left(1-\frac{1}{n}-\frac{4^dn^{\frac{-1}{2d}-\frac{1}{2d^2}}}{56}\right)^{d-1}\nonumber\\
&>n^{d-1}\left(1-4^{d-2}dn^{\frac{-1}{2d}-\frac{1}{2d^2}}\right)\label{eqn4}
\end{align}
and
\begin{align}
g(n-1,p,d,d-1,2)&>\left(n-2-\frac{2(4np)^{d-3}}{1-\frac{1}{4np}}\right)^{d-1}\nonumber\\
&>\left(n-2-\frac{16(4np)^{d-3}}{7}\right)^{d-1}\nonumber\\
&>\left(n-2-\frac{4^dn^{1-\frac{1}{2d}-\frac{1}{2d^2}}}{28}\right)^{d-1}\nonumber\\
&>n^{d-1}\left(1-\frac{2}{n}-\frac{4^dn^{\frac{-1}{2d}-\frac{1}{2d^2}}}{28}\right)^{d-1}\nonumber\\
&>n^{d-1}\left(1-4^{d-2}dn^{\frac{-1}{2d}-\frac{1}{2d^2}}\right)\label{eqn12}.
\end{align}
Substituting in \eqref{eqn1}, \eqref{eqn3}, and \eqref{eqn4} into the lower bound in Theorem \ref{bigthm}, we obtain
\begin{align}
P(G(n,p),d)&>1-\binom{n}{2}\left(1+4^ddn^{\frac{-1}{2d^2}}\right)(1-p^d)^{n^{d-1}\left(1-2\cdot 4^{d-2}dn^{\frac{-1}{2d}-\frac{1}{2d^2}}\right)\left(1-4^{d-1}dn^{\frac{-1}{2d}-\frac{1}{2d^2}}\right)}\nonumber\\
&>1-\binom{n}{2}\left(1+4^ddn^{\frac{-1}{2d^2}}\right)(1-p^d)^{n^{d-1}\left(1-2\cdot4^{d-1}dn^{\frac{-1}{2d}-\frac{1}{2d^2}}\right)}.\label{eqn5}
\end{align}
From \eqref{cond1} and \eqref{cond2}, we have
\begin{equation*}
p^dn^{d-1}\left(2\cdot 4^{d-1}dn^{\frac{-1}{2d}-\frac{1}{2d^2}}\right)\leq 2\cdot 4^{d-1}dn^{\frac{-1}{2d^2}}<\frac{1}{2}.
\end{equation*}
Thus, from \eqref{cond2}, we obtain
\begin{align}
(1-p^d)^{-n^{d-1}\left(2\cdot 4^{d-1}dn^{\frac{-1}{2d}-\frac{1}{2d^2}}\right)}<1+4^ddn^{\frac{-1}{2d^2}}.\label{eqn9}
\end{align}
Thus we deduce
\begin{align*}
P(G(n,p),d)&>1-\binom{n}{2}(1-p^d)^{n^{d-1}}\left(1+3\cdot 4^ddn^{\frac{-1}{2d^2}}\right)\\
&>1-\binom{n}{2}(1-p^d)^{n^{d-1}}\left(1+4^{d+1}dn^{\frac{-1}{2d^2}}\right)
\end{align*}
Also, from \eqref{cond1} and \eqref{cond2}, we have
\begin{equation*}
{(1-p^d)^{-2\left(n^{d-1}+\frac{n^{d-2}}{p}+\frac{n^{d-3}}{p^2}+\ldots+\frac{1}{p^{d-1}}\right)}}<(1-p^d)^{-2(n^{d-1})(1+3/(2np))}.
\end{equation*}
and
\begin{equation*}
3p^{d-1}n^{d-2}\leq 3n^{\frac{-1}{2d}-\frac{1}{2d^2}}<3n^{\frac{-1}{2d^2}}<\frac{3}{4^dd}\leq\frac{1}{64}.
\end{equation*}
Thus we can deduce
\begin{equation*}
(1-p^d)^{-2(n^{d-1})(3/(2np))}\leq 1+\frac{64n^{\frac{-1}{2d^2}}}{21}<1+4^ddn^{\frac{-1}{2d^2}}.
\end{equation*}
Thus
\begin{equation}
(1-p^d)^{-2(n^{d-1})(1+3/(2np))}<(1-p^d)^{-2n^{d-1}}\left(1+4^ddn^{\frac{-1}{2d^2}}\right).\label{eqn6}
\end{equation}
Similarly, we can obtain
\begin{equation}
(1-p^d)^{-(n^{d-1})(1+3/(2np))}<(1-p^d)^{-n^{d-1}}\left(1+\frac{192}{127}n^{\frac{-1}{2d^2}}\right).\label{eqn7}
\end{equation}
Substituting in \eqref{eqn1}, \eqref{eqn3}, \eqref{eqn12}, \eqref{eqn9}, \eqref{eqn6}, and \eqref{eqn7} into the upper bound in Theorem \ref{bigthm}, we obtain
\begin{align*}
P(G(n,p),d)&<\left(1+4^ddn^{\frac{-1}{2d^2}}\right)^2(1-p^d)^{-n^{d-1}\left(4^ddn^{\frac{-1}{2d}-\frac{1}{2d^2}}\right)}-1+\frac{2(1-p^d)^{-n^{d-1}}\left(1+\frac{192}{127}n^{\frac{-1}{2d^2}}\right)}{n(n-1)}\\
&<\left(1+4^ddn^{\frac{-1}{2d^2}}\right)^4-1+\frac{2(1-p^d)^{-n^{d-1}}\left(1+\frac{192}{127}n^{\frac{-1}{2d^2}}\right)}{n(n-1)}\\
&<\frac{2(1-p^d)^{-n^{d-1}}\left(1+\frac{192}{127}n^{\frac{-1}{2d^2}}\right)}{n(n-1)}+4^{d+2}dn^{\frac{-1}{2d^2}}
\end{align*}
with the second inequality following from \eqref{eqn9} and the third inequality following from \eqref{cond2}.
\section{Directed Graphs for diameter $d\geq 2$}
Using the above methods, we can obtain similar results about the probability of a random directed graph on n vertices having diameter $d$ where each directed edge is chosen independently with probability $p$. Furthermore, for any two vertices, say $v_1$ and $v_2$, the existence of the edge from $v_1$ to $v_2$ has probability $p$, while the existence of the edge from $v_2$ to $v_1$ also occurs with probability $p$, and these two edges occur independently. We proceed exactly as above the only changes being as follows. We replace the factor of $s^{\binom{i_0}{2}}$ in \eqref{eqn2a} and \eqref{eqn2} with $s^{i_0(n-1)}$, replace the factor of $s^{\binom{i_0}{2}+i_0i_0'+\binom{i_0'}{2}+i_0+i_1i_0'+i_0'}$ with $s^{(i_0+i_0')n+i_1i_0'}$ in \eqref{eqn3a}, and replace $\binom{n}{2}$ wherever it occurs with $n(n-1)$. Consequently, in Theorem \ref{bigthm} and Corollary \ref{bigcor}, we multiply the second term of the lower bound by $2$, divide the last term in the upper bound in Theorem \ref{bigthm} by $2$, and divide the first term in the upper bound in Corollary \ref{bigcor} to get the analogous results for random directed graphs. Everything else is left unchanged.
\section{Analysis of $k$-partite Graphs for diameter $d\geq 2$}
Here we analyze the diameters of $k$-partite graphs for some fixed $k\geq 3$. Let $G(n_1,n_2,\ldots, n_k,p)$ denote the set of all simple $k$-partite graphs with partite sets of size $\mathbf{n^{(k)}}$ vertices respectively where each edge is chosen independently with probability $p$. Here we obtain upper and lower bounds on the probability of a random simple $k$-partite graph with partite sets of sizes $\mathbf{n^{(k)}}$ vertices with independent edge selection having diameter at most $d$ for any specific $d\geq 2$, $d\in\mathbb{N}$. Again, analogous to our treatment of the random graphs $G(n,p)$, we impose restrictions on $n_1,n_2,\ldots,n_k$, and $p$. Then in the next section, we refine this result to make it more clear and meaningful by imposing further restrictions on $n_1,n_2,\ldots,n_k$, and $p$. We use the following notation:
\begin{notation}
Let
\begin{equation*}
[n_j,l]_{1\leq j\leq k}:=\{(i_1,i_2,\ldots,i_k):0\leq i_l\leq n_l-1,\forall 1\leq j\leq k,j\neq l\quad 0\leq i_j\leq n_j\},
\end{equation*}
\begin{equation*}
\mathbf{i^{(0)}}:=(i_{0,1},i_{0,2},\ldots,i_{0,k}),
\end{equation*}
\begin{equation*}
\mathbf{i^{(1)}}:=(i_{1,1},i_{1,2},\ldots,i_{1,k}),
\end{equation*}
\begin{equation*}
\mathbf{n^{(k)}}:=(n_1,n_2,\ldots,n_k)
\end{equation*}
\end{notation}
\begin{note}
Throughout this note let
\begin{equation*}
u\left(\mathbf{n^{(k)}},m,q,l\right):=\sum_{(i_1,i_2,\ldots,i_m)\in[k]^{m,q,l\neq}}n_{i_1}n_{i_2}\cdots n_{i_m},
\end{equation*}
\begin{align*}
&\quad v_1\left(\mathbf{n^{(k)}},\mathbf{i^{(0)}},p,m,q,l\right)\\
&:=\sum_{(i_1,i_2,\ldots,i_m)\in[k]^{m,q,l\neq}}(n_{i_1}-\mathbbm{1}_l(i_1)-i_{0,i_1})(n_{i_2}-\mathbbm{1}_l(i_2)-i_{0,i_2}-4n_{i_2}pi_0)(n_{i_3}-i_{0,i_3}-4n_{i_3}pi_0-4n_{i_3}p(4npi_0))\\
&\qquad\qquad\qquad\qquad\cdot(n_{i_4}-\mathbbm{1}_l(i_4)-i_{0,i_4}-4n_{i_4}pi_0-4n_{i_4}p(4np)i_0-4n_{i_4}p(4np)^2i_0)\\
&\qquad\qquad\qquad\qquad\cdots(n_{i_m}-\mathbbm{1}_l(i_m)-i_{0,i_m}-4n_{i_m}pi_0-4n_{i_m}p(4np)i_0-\ldots-4n_{i_m}p(4np)^{m-2}i_0),
\end{align*}
and
\begin{align*}
&\quad v_2\left(\mathbf{n^{(k)}},\mathbf{i^{(0)}},p,m,q,l\right)\\
&:=\sum_{(i_1,i_2,\ldots,i_m)\in[k]^{m,q,l\neq}}(n_{i_1}-\mathbbm{1}_l(i_1)-i_{0,i_1})(n_{i_2}-\mathbbm{1}_l(i_2)-i_{0,i_2}-4n_{i_2}pi_0)\\
&\qquad\qquad\qquad\qquad\cdot(n_{i_3}-\mathbbm{1}_l(i_3)-i_{0,i_3}-4n_{i_3}pi_0-4n_{i_3}p(4npi_0))\\
&\qquad\qquad\qquad\qquad\cdot(n_{i_4}-\mathbbm{1}_l(i_4)-i_{0,i_4}-4n_{i_4}pi_0-4n_{i_4}p(4np)i_0-4n_{i_4}p(4np)^2i_0)\\
&\qquad\qquad\qquad\qquad\cdots(n_{i_{m-1}}-\mathbbm{1}_l(i_{m-1})-i_{0,i_{m-1}}-4n_{i_{m-1}}pi_0-4n_{i_{m-1}}p(4np)i_0-\ldots-4n_{i_{m-1}}p(4np)^{m-3}i_0)\\
&\qquad\qquad\qquad\qquad\cdot(n_{i_m}-\mathbbm{1}_l(i_m)-i_{0,i_m}-4n_{i_m}pi_0-4n_{i_m}p(4np)i_0-\ldots-4n_{i_m}p(4np)^{m-3}i_0),
\end{align*}
where
\begin{equation*}
[k]^{m,q,l,\neq}:=\{(i_1,i_2,\ldots,i_m):i_1\neq q,i_m\neq l,\forall 1\leq j\leq m-1\quad i_j\neq i_{j+1}\}
\end{equation*}
and
\begin{equation*}
\mathbbm{1}_l(x):=
\begin{cases} 
      1 & x=l \\
      0 & x\neq l
   \end{cases}
\end{equation*}
with the conventions that
\begin{equation*}
v_1\left(\mathbf{n^{(k)}},\mathbf{i^{(0)}},p,1,q,l\right)= v_2\left(\mathbf{n^{(k)}},\mathbf{i^{(0)}},p,1,q,l\right)=\sum_{(i_1)\in[k]^{1,q,l,\neq}}(n_{i_1}-\mathbbm{1}_l(i_1)-i_{0,i_1})
\end{equation*}
and
\begin{equation*}
v_2\left(\mathbf{n^{(k)}},\mathbf{i^{(0)}},p,2,q,l\right)=\sum_{(i_1,i_2)\in[k]^{1,q,l,\neq}}(n_{i_1}-\mathbbm{1}_l(i_1)-i_{0,i_1})(n_{i_2}-\mathbbm{1}_l(i_2)-i_{0,i_2}-4n_{i_2}pi_0).
\end{equation*}
Also, let
\begin{equation*}
h_k\left(\mathbf{n^{(k)}},p,d,\mathbf{i^{(0)}}\right):=\prod_{j=1}^k\left(1-\frac{4}{5}\left(\frac{e}{3}\right)^{4p\left(n_j-i_{0,j}-4n_jpi_0-(4np)4n_jpi_0-(4np)^24n_jpi_0-\ldots-(4np)^{d-5}4n_jpi_0\right)}\right)^{2-d}
\end{equation*}
with the conventions that $h_k\left(\mathbf{n^{(k)}},p,2,\mathbf{i^{(0)}}\right)=1$,
\begin{equation*}
h_k\left(\mathbf{n^{(k)}},p,3,\mathbf{i^{(0)}}\right)=\prod_{j=1}^k\left(1-\frac{4}{5}\left(\frac{e}{3}\right)^{4n_jp}\right)^{-1},
\end{equation*}
and
\begin{equation*}
h_k\left(\mathbf{n^{(k)}},p,4,\mathbf{i^{(0)}}\right)=\prod_{j=1}^k\left(1-\frac{4}{5}\left(\frac{e}{3}\right)^{4p(n_j-i_{0,j})}\right)^{-2}.
\end{equation*}
Also, let
\begin{equation*}
g_k\left(\mathbf{n^{(k)}},j,l,p,d,d',\mathbf{i^{(0)}}\right):=\begin{cases}
                      n-n_l & d=2,j=l\\
                      n-n_j-n_l & d=2,j\neq l\\
                      1+\sum_{m=1}^{d'+1}v_1\left(\mathbf{n^{(k)}},\mathbf{i^{(0)}},p,m,j,l\right) & j\neq l,d\geq 3, d'<d-3\\
                      \sum_{m=1}^{d'+1}v_1\left(\mathbf{n^{(k)}},\mathbf{i^{(0)}},p,m,l,l\right) & j=l,d\geq 3, d'<d-3\\
                      1+\sum_{m=1}^{d-3}v_1\left(\mathbf{n^{(k)}},\mathbf{i^{(0)}},p,m,j,l\right)\\
                     \quad+v_2\left(\mathbf{n^{(k)}},\mathbf{i^{(0)}},p,d-1,j,l\right) & j\neq l,d\geq 3, d'\geq d-3\\
                     \sum_{m=1}^{d-3}v_1\left(\mathbf{n^{(k)}},\mathbf{i^{(0)}},p,m,l,l\right)\\
                     \quad+v_2\left(\mathbf{n^{(k)}},\mathbf{i^{(0)}},p,d-1,l,l\right) & j=l,d\geq 3, d'\geq d-3
                       \end{cases}
\end{equation*}
and
\begin{equation*}
g_k'\left(\mathbf{n^{(k)}},j,l,p,d,d',\mathbf{i^{(0)}}\right):=\begin{cases}
                    
                    g_k\left(\mathbf{n^{(k)}},j,l,p,d+2,d',\mathbf{i^{(0)}}\right) & d\geq 2, d'\leq d-2\\
                     
                    g_k\left(\mathbf{n^{(k)}},j,l,p,d+2,d-2,\mathbf{i^{(0)}}\right) & d\geq 2, d'>d-2.
                      \end{cases}
\end{equation*}
\end{note}
We will prove the following theorem.
\begin{theorem}\label{bigthmkpartite}
Fix $d\geq 2$, $d\in\mathbb{N}$. Let $G\left(\mathbf{n^{(k)}},p\right)$ denote the set of all simple $k$-partite graphs with partite vertex sets of sizes $n_1,n_2,\ldots,n_k$ and where each edge is chosen independently with probability $p$. Also, let $P\left(G\left(\mathbf{n^{(k)}},p\right),d\right)$ be the probability of a graph from $G\left(\mathbf{n^{(k)}},p\right)$ having diameter at most $d$. Suppose that
\begin{equation*}
8n_jp\left(1+4np+(4np)^2+\ldots+(4np)^{d'-1}\right)\leq n_j-4
\end{equation*}
for all $1\leq j\leq k$ where $d'\geq 0$. We have
\begin{align*}
P(G(\mathbf{n^{(k)}},p),d)&>1-\sum_{l=1}^k\binom{n_l}{ 2}h_k\left(\mathbf{n^{(k)}},p,d,\mathbf{1^{(l)}}\right)\left(1-f(n,p,d,1)\right)^{g_k\left(\mathbf{n^{(k)}},l,l,p,d,d',\mathbf{1^{(j)}}\right)}\\
&\qquad-\sum_{1\leq j<l\leq k}n_jn_lh_k\left(\mathbf{n^{(k)}},p,d,\mathbf{1^{(j)}}\right)\left(1-f(n,p,d,1)\right)^{g_k\left(\mathbf{n^{(k)}},j,l,p,d,d',\mathbf{1^{(j)}}\right)}
\end{align*}
\newpage
and
\begin{align*}
&\quad P\left(G\left(\mathbf{n^{(k)}},p\right),d\right)\left(\sum_{l=1}^k\binom{n_l}{2}(1-p^d)^{\sum_{m=0}^{d-1}u\left(\mathbf{n^{(k)}},m,l,l\right)p^{m-d+1}}+\sum_{1\leq j<l\leq k}n_jn_l(1-p^d)^{\sum_{m=0}^{d-1}u\left(\mathbf{n^{(k)}},m,j,l\right)p^{m-d+1}}\right)^2\\
&<\sum_{1\leq l\leq k}\left(n_l\left(n_l-1\right)\left(n_l-2\right)+\binom{n_l}{2}\binom{n_l-2}{2}\right)h_k\left(\mathbf{n^{(k)}}-\mathbf{1^{(l)}},p,d,2\cdot\mathbf{1^{(l)}}\right)\\
&\qquad\qquad\qquad\qquad\qquad\qquad\qquad\qquad\qquad\qquad\qquad\cdot\left(1-f(n,p,d,2)\right)^{2g_k\left(\mathbf{n^{(k)}}-\mathbf{1^{(l)}},l,l,p,d,d',2\cdot\mathbf{1^{(l)}}\right)}\\
&\quad+\sum_{1\leq j\neq l\leq k }\left(n_l\left(n_l-1\right)n_j+2\binom{n_l}{2}\left(n_l-2\right)n_j\right)h_k\left(\mathbf{n^{(k)}}-\mathbf{1^{(l)}},p,d,\mathbf{1^{(l)}}+\mathbf{1^{(j)}}\right)\\
&\qquad\qquad\qquad\qquad\qquad\qquad\qquad\cdot\left(1-f(n,p,d,2)\right)^{g_k\left(\mathbf{n^{(k)}}-\mathbf{1^{(l)}},l,l,p,d,d',\mathbf{1^{(l)}}+\mathbf{1^{(j)}}\right)+g_k\left(\mathbf{n^{(k)}}-\mathbf{1^{(l)}},j,l,p,d,d',\mathbf{1^{(l)}}+\mathbf{1^{(j)}}\right)}\\
&\quad+\sum_{1\leq j\neq l\leq k }\left(n_ln_j\left(n_j-1\right)+n_ln_j\left(n_l-1\right)\left(n_j-1\right)\right)h_k\left(\mathbf{n^{(k)}}-\mathbf{1^{(l)}},p,d,2\cdot\mathbf{1^{(j)}}\right)\\
&\qquad\qquad\qquad\qquad\cdot\left(1-f(n,p,d,2)\right)^{2g_k\left(\mathbf{n^{(k)}}-\mathbf{1^{(l)}},j,l,p,d,d',2\cdot\mathbf{1^{(j)}}\right)}\\
&\quad+\sum_{1\leq l_1\neq l_2\leq k}\binom{n_{l_1}}{2}\binom{n_{l_2}}{2}h_k\left(\mathbf{n^{(k)}},p,d,\mathbf{1^{(l_1)}}+\mathbf{1^{(l_2)}}\right)\left(1-f\left(n,p,d,2\right)\right)^{g_k'\left(\mathbf{n^{(k)}}-\mathbf{1^{(l_2)}},l_1,l_1,p,d,d',\mathbf{1^{(l_1)}}+\mathbf{1^{(l_2)}}\right)}\\
&\qquad\qquad\qquad\qquad\cdot\left(1-f\left(n,p,d,2\right)\right)^{g_k'\left(\mathbf{n^{(k)}}-\mathbf{1^{(l_1)}},l_2,l_2,p,d,d',\mathbf{1^{(l_1)}}+\mathbf{1^{(l_2)}}\right)}\\
&\quad+\sum_{\substack{1\leq j_1,j_2,l\leq k\\l,j_1,j_2\text{ all distinct}}}n_l^2n_{j_1}n_{j_2}h_k\left(\mathbf{n^{(k)}}-\mathbf{1^{(l)}},p,d,\mathbf{1^{(j_1)}}+\mathbf{1^{(j_2)}}\right)\\
&\qquad\qquad\qquad\qquad\cdot\left(1-f(n,p,d,2)\right)^{g_k\left(\mathbf{n^{(k)}}-\mathbf{1^{(l)}},j_1,l,p,d,d',\mathbf{1^{(j_1)}}+\mathbf{1^{(j_2)}}\right)+g_k\left(\mathbf{n^{(k)}}-\mathbf{1^{(l)}},j_2,l,p,d,d',\mathbf{1^{(j_1)}}+\mathbf{1^{(j_2)}}\right)}\\
&\quad+\sum_{\substack{1\leq j,l_1,l_2\leq k\\j,l_1,l_2\text{ all distinct}}}\binom{n_{l_1}}{2}n_{l_2}n_jh_k\left(\mathbf{n^{(k)}},p,d,\mathbf{1^{(l_1)}}+\mathbf{1^{(j)}}\right)\left(1-f\left(n,p,d,2\right)\right)^{g_k'\left(\mathbf{n^{(k)}}-\mathbf{1^{(l_2)}},l_1,l_1,p,d,d',\mathbf{1^{(l_1)}}+\mathbf{1^{(j)}}\right)}\\
&\qquad\qquad\qquad\qquad\cdot\left(1-f\left(n,p,d,2\right)\right)^{g_k'\left(\mathbf{n^{(k)}}-\mathbf{1^{(l_1)}},j,l_2,p,d,d',\mathbf{1^{(l_1)}}+\mathbf{1^{(j)}}\right)}\\
&\quad+\sum_{\substack{1\leq l_1,l_2,j_1,j_2\leq k\\l_1,l_2,j_1,j_2\text{ all distinct}}}\frac{l_1l_2j_1j_2}{4}h_k\left(\mathbf{n^{(k)}},p,d,\mathbf{1^{(j_1)}}+\mathbf{1^{(j_2)}}\right)\left(1-f\left(n,p,d,2\right)\right)^{g_k'\left(\mathbf{n^{(k)}}-\mathbf{1^{(l_2)}},j_1,l_1,p,d,d',\mathbf{1^{(j_1)}}+\mathbf{1^{(j_2)}}\right)}\\
&\qquad\qquad\qquad\qquad\cdot\left(1-f\left(n,p,d,2\right)\right)^{g_k'\left(\mathbf{n^{(k)}}-\mathbf{1^{(l_1)}},j_2,l_2,p,d,d',\mathbf{1^{(j_1)}}+\mathbf{1^{(j_2)}}\right)}\\
&\quad-\left(\sum_{l=1}^k\binom{n_l}{2}(1-p^d)^{\sum_{m=0}^{d-1}u\left(\mathbf{n^{(k)}},m,l,l\right)p^{m-d+1}}+\sum_{1\leq j<l\leq k}n_jn_l(1-p^d)^{\sum_{m=0}^{d-1}u\left(\mathbf{n^{(k)}},m,j,l\right)p^{m-d+1}}\right)^2\\
&\quad+\sum_{l=1}^k\binom{n_l}{2}(1-p^d)^{\sum_{m=0}^{d-1}u\left(\mathbf{n^{(k)}},m,l,l\right)p^{m-d+1}}+\sum_{1\leq j<l\leq k}n_jn_l(1-p^d)^{\sum_{m=0}^{d-1}u\left(\mathbf{n^{(k)}},m,j,l\right)p^{m-d+1}}.
\end{align*}
\end{theorem}
We will now prove Theorem \ref{bigthmkpartite}.
\newline
\newline
For each $n\in\mathbb{N}$, let $G(\mathbf{n^{(k)}},p)$ denote the set of all $k$-partite graphs with partite sets of sizes $\mathbf{n^{(k)}}$ vertices with edge probability $p$, and let $P(G(\mathbf{n^{(k)}},p))$ be the probability of a graph from $G(n_1,n_2,p)$ having diameter at most $d$. Let $p=\frac{r}{s}$ where $r=r(n), s=s(n)\in\mathbb{N}$. We let $A$ be the set of all graphs in $G(\mathbf{n^{(k)}},p)$, allowing for a number of duplicates of each possible graph to accommodate the edge probability $p$, so that
\begin{equation*}
|A|=\sum_{k=0}^t\binom{t}{k}r^k(s-r)^{t-k}=s^t
\end{equation*}
where $t$ is the number of edges in the respective complete $k$-partite graph. We let $B$ be all pairs of vertices that occur in the graph so $|B|=\binom{n}{2}$ where $n$ is the total number of vertices. For a graph $a\in A$ and a pair of vertices $b\in B$, we say $a\sim b$ if there is no path between the pair of vertices $b$ that consists of at most $d-1$ edges. Thus, we will have $\omega(a)=0$ if and only if $a$ is connected with diameter at most $d$.
\newline
\newline
Pick a pair of vertices $b\in B$ and call them $v_1$ and $v_2$. To calculate $\deg b$, we need to calculate the number of graphs in $A$ such that there is no path from $v_1$ to $v_2$ that consists of at most $d$ edges. To help with this calculation, we will calculate a generalised notion of $\deg b$ as follows. Let $1\leq l\leq k$ and $0\leq i_{0,j}\leq n_j$ for all $j\neq l$ and $0\leq i_{0,l}\leq n_l-1$ . Pick a specific set of $i_{0,j}$ vertices out of the labeled vertices in the partite set consisting of $n_j$ vertices, as well as another vertex, say $v$, in the partite set consisting of $n_l$ vertices. We will let $C_k(\mathbf{n^{(k)}},l,r,s,d,\mathbf{i^{(0)}})$ denote the number of graphs in $A$ such that there is no path from any of the $i_0$ vertices to vertex $v$ that consists of at most $d$ edges where the $i_0$ vertices come from the partite set that consists of $v_j$ vertices. Let $i_0=i_{0,1}+i_{0,2}+\ldots+i_{0,k}$. We can derive the recursive formula
\begin{align}
&\quad C_k(\mathbf{n^{(k)}},l,r,s,d+1,\mathbf{i^{(0)}})\nonumber\\
&=\sum_{\mathbf{i^{(1)}}\in[n_j-i_{0,j},l]_{1\leq j\leq k}}\prod_{j=1}^k\binom{n_j-i_{0,j}-\mathbbm{1}_l(j)}{i_{1,j}}\left(s^{i_0-i_{0,j}}-(s-r)^{i_0-i_{0,j}}\right)^{i_{1,j}}(s-r)^{(i_0-i_{0,j})(n_j-i_{0,j}-i_{1,j})}s^{\frac{(i_0-i_{0,j})i_{0,j}}{2}}\nonumber\\
&\quad\cdot C_k(\mathbf{n^{(k)}}-\mathbf{i^{(0)}},l,r,s,d,\mathbf{i^{(1)}})\label{eqn32}
\end{align}
valid so long as at least two of the $i_{0,j}$ values are nonzero. If, however, only one of them is nonzero, say $i_{0,j}$, then the factor $\left(s^{i_0-i_{0,j}}-(s-r)^{i_0-i_{0,j}}\right)^{i_{1,j}}$ is replaced by $1$ for the respective $j$ value. Everything else is left unchanged.
Also,
\begin{equation*}
C_k(\mathbf{n^{(k)}},l,r,s,1,\mathbf{i^{(0)}})=(s-r)^{i_0-i_{0,l}}s^{t-i_0+i_{0,l}}.
\end{equation*}
Then we can deduce that $C_k(\mathbf{n^{(k)}},l,r,s,d,\mathbf{i^{(0)}})=\deg b$ if all of the $i_{0,j}$ values are $0$, except for one of them having the value $1$ if we are working with diameter $d$. Let $D_k\left(\mathbf{n^{(k)}},l,p,d,\mathbf{i^{(0)}}\right)=\frac{C_k(\mathbf{n^{(k)}},l,r,s,d,\mathbf{i^{(0)}})}{s^t}$ so that $D_k\left(\mathbf{n^{(k)}},l,p,d,\mathbf{i^{(0)}}\right)$ is the probability that the edge distance between $v$ and any of the $i_0$ vertices is greater than $d$. We will prove that for all $(\mathbf{i^{(0)}})\in[n_j,l]_{1\leq j\leq k}$ we have
\begin{equation}
D_k\left(\mathbf{n^{(k)}},l,p,d,\mathbf{i^{(0)}}\right)\geq(1-p^d)^{\sum_{j=1}^ki_{0,j}\sum_{m=0}^{d-1}u\left(\mathbf{n^{(k)}},m,j,l\right)p^{m-d+1}}.\label{eqn31}
\end{equation}
If we also have the additional constraints $1<\frac{n_j}{i_{0,j}+4n_jpi_0+4n_jp(4np)i_0+(4n_jp)(4np)^2i_0+\ldots+(4n_jp)(4np)^{d'-1}i_0}$ for all $1\leq j\leq k$ with $j\neq l$, and $1<\frac{n_l-1}{i_{0,l}+4n_lpi_0+4n_lp(4np)i_0+(4n_lp)(4np)^2i_0+\ldots+(4n_lp)(4np)^{d'-1}i_0}$ where $d'\geq 0$, then we also have
\begin{equation}
D_k\left(\mathbf{n^{(k)}},l,p,d,\mathbf{i^{(0)}}\right)\leq h_k\left(\mathbf{n^{(k)}},p,d+1,\mathbf{i^{(0)}}\right)\left(1-f(n,p,d+1,i_0)\right)^{\sum_{j=1}^{k}i_{0,j}g_k\left(\mathbf{n^{(k)}},j,l,p,d,d',\mathbf{i^{(0)}}\right)}\label{eqn33}.
\end{equation}
For $d=1$, we have $D_k\left(\mathbf{n^{(k)}},l,p,\mathbf{i^{(0)}}\right)=(1-p)^{i_0-i_{0,l}}$. Suppose for some $d\geq 1$ \eqref{eqn31} holds for all $(\mathbf{i^{(0)}})\in[n_j,l]_{1\leq j\leq k}$, and $0<p<1$. We will prove it holds for $d+1$. From \eqref{eqn32} we have
\begin{align*}
&\quad D_k\left(\mathbf{n^{(k)}},l,p,d+1,\mathbf{i^{(0)}}\right)\\
&=\sum_{\mathbf{i^{(1)}}\in[n_j-i_{0,j},l]_{1\leq j\leq k}}\prod_{j=1}^k\binom{n_j-i_{0,j}-\mathbbm{1}_l(j)}{i_{1,j}}(1-p)^{(i_0-i_{0,j})(n_j-i_{0,j})}\left((1-p)^{i_{0,j}-i_0}-1\right)^{i_{1,j}}\\
&\quad\cdot D_k\left(\mathbf{n^{(k)}}-\mathbf{i^{(0)}},l,p,d,\mathbf{i^{(1)}}\right)\\
&>\sum_{\mathbf{i^{(1)}}\in[n_j-i_{0,j},l]_{1\leq j\leq k}}\prod_{j=1}^k\binom{n_j-i_{0,j}-\mathbbm{1}_l(j)}{ i_{1,j}}(1-p)^{(i_0-i_{0,j})(n_j-i_{0,j})}\left((1-p)^{i_{0,j}-i_0}-1\right)^{i_{1,j}}\\
&\qquad\qquad\qquad\qquad\qquad\cdot(1-p^d)^{i_{1,j}\sum_{m=0}^{d-1}u\left(\mathbf{n^{(k)}},m,j,l\right)p^{m-d+1}}\\
&=\prod_{j=1}^k(1-p)^{(i_0-i_{0,j})(n_j-i_{0,j})}\left(1+\left((1-p)^{i_{0,j}-i_0}-1\right)(1-p^d)^{\sum_{m=0}^{d-1}u\left(\mathbf{n^{(k)}},m,j,l\right)p^{m-d+1}}\right)^{n_j-i_{0,j}-\mathbbm{1}_l(j)}\\
&=(1-p)^{(i_0-i_{0,l})}\prod_{j=1}^k\left((1-p)^{i_0-i_{0,j}}+\left(1-(1-p)^{i_0-i_{0,j}}\right)(1-p^d)^{\sum_{m=0}^{d-1}u\left(\mathbf{n^{(k)}},m,j,l\right)p^{m-d+1}}\right)^{n_j-i_{0,j}-\mathbbm{1}_l(j)}.
\end{align*}
Using Lemma \ref{lem1} and doing a change in variables we thus have
\begin{align*}
&\quad D_k\left(\mathbf{n^{(k)}},l,p,d+1,\mathbf{i^{(0)}}\right)\\
&>(1-p)^{i_0-i_{0,l}}\prod_{j=1}^k(1-p^{d+1})^{(i_0-i_{0,j})(n_j-\mathbbm{1}_l(j))\sum_{m=0}^{d-1}u\left(\mathbf{n^{(k)}},m,j,l\right)p^{m-d+1}}\\
&>(1-p^{d+1})^{(1+p^{-1}+p^{-2}+\ldots+p^{-d})(i_0-i_{0,l})}\prod_{j=1}^k(1-p^{d+1})^{(i_0-i_{0,j})(n_j-\mathbbm{1}_l(j))\sum_{m=0}^{d-1}u\left(\mathbf{n^{(k)}},m,j,l\right)p^{m-d+1}}\\
&>(1-p^{d+1})^{p^{-d}(i_0-i_{0,l})}\prod_{j=1}^k(1-p^{d+1})^{(i_0-i_{0,j})n_j\sum_{m=0}^{d-1}u\left(\mathbf{n^{(k)}},m,j,l\right)p^{m-d+1}}\\
&=(1-p^{d+1})^{p^{-d}(i_0-i_{0,l})}\prod_{q=1}^k(1-p^{d+1})^{i_{0,q}\sum_{\substack{j=1\\j\neq q}}^{k}n_j\sum_{m=0}^{d-1}u\left(\mathbf{n^{(k)}},m,j,l\right)p^{m-d+1}}\\
&=\prod_{q=1}^k(1-p^{d+1})^{i_{0,q}\sum_{m=0}^du\left(\mathbf{n^{(k)}},m,q,l\right)p^{m-d}}.
\end{align*}
Thus \eqref{eqn31} is proved. Next we prove \eqref{eqn33} again by induction on $d$. For $d=2$, applying Lemma \ref{lem1} we have
\begin{align*}
&\quad D_k\left(\mathbf{n^{(k)}},l,p,2,\mathbf{i^{(0)}}\right)\\
&=\prod_{j=1}^k(1-p)^{(i_0-i_{0,j})(n_j-i_{0,j})}\sum_{i_{1,j}=0}^{n_j-i_{0,j}-\mathbbm{1}_l(j)}\binom{n_j-i_{0,j}-\mathbbm{1}_l(j)}{i_{1,j}}\left((1-p)^{i_{0,j}-i_0}-1\right)^{i_{1,j}}(1-p)^{i_{1,j}}\\
&=(1-p)^{i_0-i_{0,l}}\prod_{\substack{j=1\\j\neq l}}^k\left(1-p+p(1-p)^{i_0-i_{0,j}}\right)^{n_j-i_{0,j}}\\
&\leq\prod_{\substack{j=1\\j\neq l}}^k(1-f(n,p,2,i_0))^{(i_0-i_{0,j})n_j}\\
&=\prod_{m=1}^k\prod_{\substack{j=1\\j\neq l,m}}^k(1-f(n,p,2,i_0))^{i_{0,m}n_j}\\
&=(1-f(n,p,2,i_0))^{i_{0,l}(n-n_l)}\prod_{\substack{m=1\\m\neq l}}^k(1-f(n,p,2,i_0))^{i_{0,m}(n-n_m-n_l)}.
\end{align*}
Suppose for some $d\geq 2$ \eqref{eqn33} holds for all $\mathbf{i^{(0)}}$ in the stated ranges, and $0<p<1$. We will prove \eqref{eqn33} holds for $d+1$. We divide into three cases.
\setcounter{case}{0}
\begin{case}{$\frac{n_j}{i_{0,j}+4n_jpi_0}\leq 1$ for all $1\leq j\leq k$ with $j\neq l$, and $\frac{n_l-1}{i_{0,l}+4n_lpi_0}\leq 1$}
\normalfont
\newline
\newline
We have the following:
\begin{align*}
&\quad D_k\left(\mathbf{n^{(k)}},l,p,d+1,\mathbf{i^{(0)}}\right)\\
&<\sum_{\mathbf{i^{(1)}}\in[n_j-i_{0,j},l]_{1\leq j\leq k}}h_k\left(\mathbf{n^{(k)}}-\mathbf{i^{(0)}},p,d,\mathbf{i^{(1)}}\right)\\
&\quad\cdot\prod_{j=1}^k\binom{n_j-i_{0,j}-\mathbbm{1}_l(j)}{i_{1,j}}(1-p)^{(i_0-i_{0,j})(n_j-i_{0,j})}\left((1-p)^{i_{0,j}-i_0}-1\right)^{i_{1,j}}\\
&\qquad\qquad\cdot(1-f(n-i_0,p,d,i_1))^{i_{1,j}g_k\left(\mathbf{n^{(k)}}-\mathbf{i^{(0)}},j,l,p,d,0,\mathbf{i^{(1)}}\right)}.
\end{align*}
We can deduce that $h_k\left(\mathbf{n^{(k)}}-\mathbf{i^{(0)}},p,d,\mathbf{i^{(1)}}\right)\leq h_k\left(\mathbf{n^{(k)}}-\mathbf{i^{(0)}},p,d,4np\mathbf{i^{(0)}}\right)<h_k\left(\mathbf{n^{(k)}},p,d+1,\mathbf{i^{(0)}}\right)$ for all $1\leq j\leq k$ and from Lemma \ref{lem3}, we can deduce that $f(n,p,d,4npi_0)<f(n-i_0,p,d,i_1)$. As well, $g_k\left(\mathbf{n^{(k)}}-\mathbf{i^{(0)}},j,l,p,d,0,\mathbf{n^{(k)}}-\mathbf{i^{(0)}}-\mathbf{1^{(l)}}\right)\leq g_k\left(\mathbf{n^{(k)}}-\mathbf{i^{(0)}},j,l,p,d,0,\mathbf{i^{(1)}}\right)$. Thus we have
\begin{align*}
&\quad D_k\left(\mathbf{n^{(k)}},l,p,d+1,\mathbf{i^{(0)}}\right)\\
&<h_k\left(\mathbf{n^{(k)}},p,d+1,\mathbf{i^{(0)}}\right)\prod_{j=1}^k(1-p)^{(i_0-i_{0,j})(n_j-i_{0,j})}\\
&\qquad\qquad\qquad\qquad\qquad\qquad\cdot\sum_{i_{1,j}=0}^{n_j-i_{0,j}-\mathbbm{1}_l(j)}\binom{n_j-i_{0,j}-\mathbbm{1}_l(j)}{i_{1,j}}\left((1-p)^{i_{0,j}-i_0}-1\right)^{i_{1,j}}\\
&\qquad\qquad\qquad\qquad\qquad\qquad\cdot(1-f(n,p,d,4npi_0))^{i_{1,j}g_k\left(\mathbf{n^{(k)}}-\mathbf{i^{(0)}},j,l,p,d,0,\mathbf{n^{(k)}}-\mathbf{i^{(0)}}-\mathbf{1^{(l)}}\right)}\\
&=h_k\left(\mathbf{n^{(k)}},p,d+1,\mathbf{i^{(0)}}\right)\prod_{j=1}^k(1-p)^{(i_0-i_{0,j})(n_j-i_{0,j})}\\
&\qquad\qquad\cdot\left(1+\left((1-p)^{i_{0,j}-i_0}-1\right)(1-f(n,p,d,4npi_0))^{g_k\left(\mathbf{n^{(k)}}-\mathbf{i^{(0)}},j,l,p,d,0,\mathbf{n^{(k)}}-\mathbf{i^{(0)}}-\mathbf{1^{(l)}}\right)}\right)^{n_j-i_{0,j}-\mathbbm{1}_l(j)}\\
&=(1-p)^{i_0-i_{0,l}}h_k\left(\mathbf{n^{(k)}},p,d,4np\mathbf{i^{(0)}}\right)\\
&\quad\cdot\prod_{j=1}^k\left((1-p)^{i_0-i_{0,j}}+\left(1-(1-p)^{i_0-i_{0,j}}\right)(1-f(n,p,d,4npi_0))^{g_k\left(\mathbf{n^{(k)}}-\mathbf{i^{(0)}},j,l,p,d,0,\mathbf{n^{(k)}}-\mathbf{i^{(0)}}-\mathbf{1^{(l)}}\right)}\right)^{n_j-i_{0,j}-\mathbbm{1}_l(j)}.
\end{align*}
We note that $g_k\left(\mathbf{n^{(k)}}-\mathbf{i^{(0)}},j,l,p,d,0,\mathbf{n^{(k)}}-\mathbf{i^{(0)}}-\mathbf{1^{(l)}}\right)<n^{d-1}$ and so, using Lemma \ref{lem1}, we thus have
\begin{align*}
&\quad D_k\left(\mathbf{n^{(k)}},l,p,d+1,\mathbf{i^{(0)}}\right)\\
&<h_k\left(\mathbf{n^{(k)}},p,d+1,\mathbf{i^{(0)}}\right)\prod_{j=1}^k\left(1-f(n,p,d+1,i_0)\right)^{(i_0-i_{0,j})g_k\left(\mathbf{n^{(k)}}-\mathbf{i^{(0)}},j,l,p,d,0,\mathbf{n^{(k)}}-\mathbf{i^{(0)}}-\mathbf{1^{(l)}}\right)(n_j-i_{0,j}-\mathbbm{1}_l(j))}\\
&=h_k\left(\mathbf{n^{(k)}},p,d+1,\mathbf{i^{(0)}}\right)\left(1-f(n,p,d+1,i_0)\right)^{i_{0,l}\sum_{\substack{j=1\\j\neq l}}^k(n_j-i_{0,j})g_k\left(\mathbf{n^{(k)}}-\mathbf{i^{(0)}},j,l,p,d,0,\mathbf{n^{(k)}}-\mathbf{i^{(0)}}-\mathbf{1^{(l)}}\right)}\\
&\quad\cdot\prod_{\substack{q=1\\q\neq l}}^k\left(1-f(n,p,d+1,i_0)\right)^{i_{0,q}\left(1+\sum_{\substack{j=1\\j\neq q}}^k(n_j-i_{0,j}-\mathbbm{1}_l(j))g_k\left(\mathbf{n^{(k)}}-\mathbf{i^{(0)}},j,l,p,d,0,\mathbf{n^{(k)}}-\mathbf{i^{(0)}}-\mathbf{1^{(l)}}\right)\right)}.
\end{align*}
We can deduce that
\begin{equation*}
\sum_{\substack{j=1\\j\neq l}}^k(n_j-i_{0,j})g_k\left(\mathbf{n^{(k)}}-\mathbf{i^{(0)}},j,l,p,d,0,\mathbf{n^{(k)}}-\mathbf{i^{(0)}}-\mathbf{1^{(l)}}\right)\geq g_k\left(\mathbf{n^{(k)}},l,l,p,d,0,\mathbf{i^{(0)}}\right)
\end{equation*}
and
\begin{equation*}
1+\sum_{\substack{j=1\\j\neq q}}^k(n_j-i_{0,j}-\mathbbm{1}_l(j))g_k\left(\mathbf{n^{(k)}}-\mathbf{i^{(0)}},j,l,p,d,0,\mathbf{n^{(k)}}-\mathbf{i^{(0)}}-\mathbf{1^{(l)}}\right)\geq g_k\left(\mathbf{n^{(k)}},q,l,p,d,0,\mathbf{i^{(0)}}\right)
\end{equation*}
for all $1\leq q\leq k$ with $q\neq l$ and so we have \eqref{eqn33}.
\end{case}
\begin{case}{$1<\frac{n_j}{i_{0,j}+4n_jpi_0}$ for some $1\leq j\leq k$ with $j\neq l$, and/or $1<\frac{n_l-1}{i_{0,l}+4n_jpi_0}$}.
\normalfont
\newline
\newline
We proceed exactly as in Case $1$, except that for every $j\neq l$ with $1<\frac{n_j}{i_{0,j}+4n_jpi_0}$, we replace the summation $\sum_{i_{1,j}}^{n_j-i_{0,j}}$ with $\sum_{i_{1,j}}^{4n_ji_0}$ and multiply all the expressions following $D_k\left(\mathbf{n^{(k)}},l,p,d+1,\mathbf{i^{(0)}}\right)<$ by $\left(1-\frac{4}{5}\left(\frac{e}{3}\right)^{4n_jpi_0}\right)^{-1}$. As well, if it's the case that $1<\frac{n_l-1}{i_{0,l}+4n_lpi_0}$, then we  replace the summation $\sum_{i_{1,l}}^{n_l-i_{0,l}-1}$ with $\sum_{i_{1,l}}^{4n_li_0}$ and multiply all the expressions following $D_k\left(\mathbf{n^{(k)}},l,p,d+1,\mathbf{i^{(0)}}\right)<$ by $\left(1-\frac{4}{5}\left(\frac{e}{3}\right)^{4n_lpi_0}\right)^{-1}$.
\end{case}
\begin{case}{$1<\frac{n_j}{i_{0,j}+4n_jpi_0+4n_jp(4np)i_0+(4n_jp)(4np)^2i_0+\ldots+(4n_jp)(4np)^{d'}i_0}$ for all $1\leq j\leq k$ with $j\neq l$, and $1<\frac{n_l-1}{i_{0,l}+4n_lpi_0+4n_lp(4np)i_0+(4n_lp)(4np)^2i_0+\ldots+(4n_lp)(4np)^{d'}i_0}$}.
\normalfont
\newline
\newline
We proceed exactly as in Case $1$, except that for every $j\neq l$, we replace the summation $\sum_{i_{1,j}}^{n_j-i_{0,j}}$ with $\sum_{i_{1,j}}^{4n_ji_0}$ and multiply all the expressions following $D_k\left(\mathbf{n^{(k)}},l,p,d+1,\mathbf{i^{(0)}}\right)<$ by $\left(1-\frac{4}{5}\left(\frac{e}{3}\right)^{4n_jpi_0}\right)^{-1}$. As well, we  replace the summation $\sum_{i_{1,l}}^{n_l-i_{0,l}-1}$ with $\sum_{i_{1,l}}^{4n_li_0}$ and multiply all the expressions following $D_k\left(\mathbf{n^{(k)}},l,p,d+1,\mathbf{i^{(0)}}\right)<$ by $\left(1-\frac{4}{5}\left(\frac{e}{3}\right)^{4n_lpi_0}\right)^{-1}$. As well, we replace $g_k\left(\mathbf{n^{(k)}}-\mathbf{i^{(0)}},j,l,p,d,0,\mathbf{n^{(k)}}-\mathbf{i^{(0)}}-\mathbf{1^{(l)}}\right)$ with $g_k\left(\mathbf{n^{(k)}}-\mathbf{i^{(0)}},j,l,p,d,d',4pi_0\cdot\mathbf{n^{(k)}}\right)$ and use
\begin{equation*}
\sum_{\substack{j=1\\j\neq l}}^k(n_j-i_{0,j})g_k\left(\mathbf{n^{(k)}}-\mathbf{i^{(0)}},j,l,p,d,d',4pi_0\cdot\mathbf{n^{(k)}}\right)\geq g_k\left(\mathbf{n^{(k)}},l,l,p,d+1,d'+1,\mathbf{i^{(0)}}\right)
\end{equation*}
and
\begin{align*}
1+\sum_{\substack{j=1\\j\neq q}}^k(n_j-i_{0,j}-\mathbbm{1}_l(j))g_k\left(\mathbf{n^{(k)}}-\mathbf{i^{(0)}},j,l,p,d,d',4pi_0\cdot\mathbf{n^{(k)}}\right)\geq g_k\left(\mathbf{n^{(k)}},q,l,p,d+1,d'+1,\mathbf{i^{(0)}}\right)
\end{align*}
for all $1\leq q\leq k$ with $q\neq l$.
\end{case}
By \eqref{eqn33}, we have
\begin{align*}
\frac{\sum_{b\in B}\deg b}{s^t}&<\sum_{l=1}^k\binom{n_l}{ 2}h_k\left(\mathbf{n^{(k)}},p,d,\mathbf{1^{(l)}}\right)\left(1-f(n,p,d,1)\right)^{g_k\left(\mathbf{n^{(k)}},l,l,p,d,d',\mathbf{1^{(j)}}\right)}\\
&\quad+\sum_{1\leq j<l\leq k}n_jn_lh_k\left(\mathbf{n^{(k)}},p,d,\mathbf{1^{(j)}}\right)\left(1-f(n,p,d,1)\right)^{g_k\left(\mathbf{n^{(k)}},j,l,p,d,d',\mathbf{1^{(j)}}\right)}.
\end{align*}
Hence, by the simple sieve, we have
\begin{align*}
P(G(\mathbf{n^{(k)}},p),d)&>1-\sum_{l=1}^k\binom{n_l}{ 2}h_k\left(\mathbf{n^{(k)}},p,d,\mathbf{1^{(l)}}\right)\left(1-f(n,p,d,1)\right)^{g_k\left(\mathbf{n^{(k)}},l,l,p,d,d',\mathbf{1^{(j)}}\right)}\\
&\qquad-\sum_{1\leq j<l\leq k}n_jn_lh_k\left(\mathbf{n^{(k)}},p,d,\mathbf{1^{(j)}}\right)\left(1-f(n,p,d,1)\right)^{g_k\left(\mathbf{n^{(k)}},j,l,p,d,d',\mathbf{1^{(j)}}\right)}.
\end{align*}
We now calculate $n(b_1,b_2)$ to get an upper bound for $P\left(G\left(\mathbf{n^{(k)}},p\right),d\right)$ using the Tur\'an sieve. If the two pairs of vertices $b_1$ and $b_2$ are the same, then we just have $n(b_1,b_2)=\deg b$. If $b_1$ and $b_2$ have exactly one vertex in common, then we can see that $n(b_1,b_2)=C_k\left(\mathbf{n^{(k)}},l,r,s,d+1,\mathbf{i^{(0)}}\right)$ where $i_0=2$, and use \eqref{upperdegb}. Hence the only question is when the two pairs of vertices are disjoint.
\newline
\newline
As in our calculations for $\deg b$, to help calculate $n(b_1,b_2)$ in this case, we will calculate a generalised notion of $n(b_1,b_2)$ as follows. Let $1\leq l_1\leq k$ and $1\leq l_2\leq k$. Also, let $0\leq i_{0,j},i_{0,j}'\leq n_j$ with $i_{0,j}+i_{0,j}'\leq n_j$ for all $j\neq l_1,l_2$. Also, if $l_1\neq l_2$, pick $0\leq i_{0,l_1},i_{0,l_1}'\leq n_{l_1}-1$ with $i_{0,l_1}+i_{0,l_1}'\leq n_{l_1}-1$ and $0\leq i_{0,l_2},i_{0,l_2}'\leq n_{l_2}-1$ with $i_{0,l_2}+i_{0,l_2}'\leq n_{l_2}-1$. Otherwise, pick $0\leq i_{0,l_1},i_{0,l_1}'\leq n_{l_1}-2$ with $i_{0,l_1}+i_{0,l_1}'\leq n_{l_1}-2$. Let $i_0=i_{0,1}+i_{0,2}+\ldots+i_{0,k}$ and $i_0'=i_{0,1}'+i_{0,2}'+\ldots+i_{0,k}'$.
\newline
\newline
First, let $l_1=l_2=l$. Pick two disjoint specific sets of $i_{0,j}$ and $i_{0,j}'$ vertices out of the labeled vertices in the partite set consisting of $n_j$ vertices, as well as two other vertices, say $v_1$ and $v_2$, in the partite set consisting of $n_l$ vertices. We will let $C_k'\left(\mathbf{n^{(k)}},l,r,s,d,\mathbf{i^{(0)}},\mathbf{i^{(0)}}'\right)$ denote the number of graphs in $A$ such that there is no path from any of the $i_0$ vertices to vertex $v_1$ that consists of at most $d$ edges and no path from any of the $i_0'$ vertices to vertex $v_2$ that consists of at most $d$ edges. We can derive the recursive formula
\begin{align}
&\quad C_k'\left(\mathbf{n^{(k)}},l,r,s,d+1,\mathbf{i^{(0)}},\mathbf{i^{(0)}}'\right)\nonumber\\
&<\sum_{\mathbf{i^{(1)}}\in[n_j-i_{0,j}-i_{0,j}',l,l]_{1\leq j\leq k}}\sum_{\mathbf{i^{(1)}}'\in[n_j-i_{0,j}-i_{0,j}'-i_{1,j},l,l]_{1\leq j\leq k}}(s-r)^{i_{0,l}+i_{0,l}'-i_0-i_0'}\nonumber\\
&\quad\cdot\prod_{j=1}^k\binom{n_j-i_{0,j}-i_{0,j}'-2\cdot\mathbbm{1}_l(j)}{i_{1,j}}\left(s^{i_0-i_{0,j}}-(s-r)^{i_0-i_{0,j}}\right)^{i_{1,j}}(s-r)^{(i_0-i_{0,j})(n_j-i_{0,j}-i_{0,j}'-i_{1,j})}\nonumber\\
&\qquad\qquad\cdot\binom{n_j-i_{0,j}-i_{0,j}'-i_{1,j}-2\cdot\mathbbm{1}_l(j)}{i_{1,j}'}\left(s^{i_0'-i_{0,j}'}-(s-r)^{i_0'-i_{0,j}'}\right)^{i_{1,j}}(s-r)^{(i_0'-i_{0,j}')(n_j-i_{0,j}-i_{0,j}'-i_{1,j}-i_{1,j}')}\nonumber\\
&\qquad\qquad\cdot s^{t'+i_0+i_0'-i_{0,l}-i_{0,l}'}C_k'\left(\mathbf{n^{(k)}}-\mathbf{i^{(0)}}-\mathbf{i^{(0)}}',l,r,s,d,\mathbf{i^{(1)}},\mathbf{i^{(1)}}'\right)\label{eqn34}
\end{align}
valid so long as at least two of the $i_{0,j}$ values are nonzero and at least two of the $i_{0,j}'$ values are nonzero where $t'$ is the sum of the number of potential edges among the $i_0$ and $i_0'$ vertices and the number of potential edges with one vertex among the $i_0'$ vertices and the other vertex among the $i_{1,j}$ vertices for all $1\leq j\leq k$. If, however, only one of the $i_{0,j}$ is nonzero, say $i_{0,j}$, then the factor $\left(s^{i_0-i_{0,j}}-(s-r)^{i_0-i_{0,j}}\right)^{i_{1,j}}$ is replaced by $1$ for the respective $j$ value. The same holds if only of the $i_{0,j}'$ is nonzero. Everything else is left unchanged. As well,
\begin{equation*}
C_k'\left(\mathbf{n^{(k)}},l,r,s,d,\mathbf{i^{(0)}},\mathbf{i^{(0)}}'\right)=(s-r)^{i_0+i_0'-i_{0,l}-i_{0,l}'}s^{t-i_0-i_0'+i_{0,l}+i_{0,l}'}.
\end{equation*}
Let $D_k'\left(\mathbf{n^{(k)}},l,p,d,\mathbf{i^{(0)}},\mathbf{i^{(0)}}'\right):=\frac{C_k'\left(\mathbf{n^{(k)}},l,r,s,d,\mathbf{i^{(0)}},\mathbf{i^{(0)}}'\right)}{s^t}$ so that $D_k'\left(\mathbf{n^{(k)}},l,r,s,d,\mathbf{i^{(0)}},\mathbf{i^{(0)}}'\right)$ is the probability that the edge distance between $v$ and any of the $i_0$ vertices is greater than $d$ and that the edge distance between $v'$ and any of the $i_0'$ vertices is greater than $d$. We will prove that for all $(\mathbf{i^{(0)}})\in[n_j,l]_{1\leq j\leq k}$ we have
\begin{equation}
D_k'\left(\mathbf{n^{(k)}},l,p,d,\mathbf{i^{(0)}},\mathbf{i^{(0)}}'\right)\leq D_k\left(\mathbf{n^{(k)}}-\mathbf{1^{(l)}},l,p,d,\mathbf{i^{(0)}}+\mathbf{i^{(0)}}'\right).\label{eqn35}
\end{equation}
For $d=1$, we have
\begin{equation*}
D_k'\left(\mathbf{n^{(k)}},l,p,d,\mathbf{i^{(0)}},\mathbf{i^{(0)}}'\right)=(1-p)^{i_0+i_0'-i_{0,l}-i_{0,l}'}=D_k'\left(\mathbf{n^{(k)}}-\mathbf{1^{(l)}},l,p,1,\mathbf{i^{(0)}}+\mathbf{i^{(0)}}'\right)
\end{equation*}
so \eqref{eqn35} holds for $d=1$. Suppose for some $d\geq 1$ \eqref{eqn35} holds for all $(\mathbf{i^{(0)}})\in[n_j,l]_{1\leq j\leq k}$ and $0<p<1$. Assume that two of the $i_{0,j}$ are nonzero and two of the $i_{0,j}'$ are nonzero (if the case is otherwise, then we can proceed similarly). First we have
\begin{align*}
&\quad D_k'\left(\mathbf{n^{(k)}},l,p,d+1,\mathbf{i^{(0)}},\mathbf{i^{(0)}}'\right)\\
&\leq (1-p)^{i_{0,l}+i_{0,l}'-i_0-i_0'}\\
&\quad\cdot\sum_{\mathbf{i^{(1)}}\in[n_j-i_{0,j}-i_{0,j}',l,l]_{1\leq j\leq k}}\sum_{\mathbf{i^{(1)}}'\in[n_j-i_{0,j}-i_{0,j}'-i_{1,j},l,l]_{1\leq j\leq k}}\\
&\quad\cdot\prod_{j=1}^k\binom{n_j-i_{0,j}-i_{0,j}'-2\cdot\mathbbm{1}_l(j)}{i_{1,j}}\left((1-p)^{i_{0,j}-i_0}-1\right)^{i_{1,j}}(1-p)^{(i_0-i_{0,j})(n_j-i_{0,j}-i_{0,j}')}\\
&\qquad\qquad\cdot\binom{n_j-i_{0,j}-i_{0,j}'-i_{1,j}-2\cdot\mathbbm{1}_l(j)}{i_{1,j}'}\left((1-p)^{i_{0,j}'-i_0'}-1\right)^{i_{1,j}'}(1-p)^{(i_0'-i_{0,j}')(n_j-i_{0,j}-i_{0,j}'-i_{1,j})}\\
&\qquad\qquad\cdot D_k\left(\mathbf{n^{(k)}}-\mathbf{i^{(0)}}-\mathbf{i^{(0)}}'-\mathbf{1}^{(l)},l,p,d,\mathbf{i^{(1)}}+\mathbf{i^{(1)}}'\right).
\end{align*} 
Writing $\mathbf{v}=\mathbf{i^{(1)}}+\mathbf{i^{(1)}}'=(v_1,v_2,\ldots,v_k)$, we have
\begin{align*}
&\quad D_k'\left(\mathbf{n^{(k)}},l,p,d+1,\mathbf{i^{(0)}},\mathbf{i^{(0)}}'\right)\\
&\leq\sum_{\mathbf{v}\in[n_j-i_{0,j}-i_{0,j}',l,l]_{1\leq j\leq k}}D_k\left(\mathbf{n^{(k)}}-\mathbf{i^{(0)}}-\mathbf{i^{(0)}}'-\mathbf{1}^{(l)},l,p,d,\mathbf{v}\right)(1-p)^{i_{0,l}+i_{0,l}'-i_0-i_0'}\\
&\quad\cdot\prod_{j=1}^k(1-p)^{(i_0+i_0'-i_{0,j}-i_{0,j}')(n_j-i_{0,j}-i_{0,j}')}\binom{n_j-i_{0,j}-i_{0,j}'-2\cdot\mathbbm{1}_l(j)}{v_j}\left((1-p)^{i_{0,j}'-i_0'}-1\right)^{v_j}\\
&\qquad\qquad\cdot\sum_{i_{1,j}=0}^{v_j}{v_j}{i_{1,j}}\left((1-p)^{i_{0,j}-i_0}-1\right)^{i_{1,j}}(1-p)^{-i_{1,j}(i_0'-i_{0,j}')}\left((1-p)^{i_{0,j}'-i_0'}-1\right)^{-i_{1,j}}.
\end{align*}
Thus
\begin{align*}
&\quad D_k'\left(\mathbf{n^{(k)}},l,p,d+1,\mathbf{i^{(0)}},\mathbf{i^{(0)}}'\right)\\
&\leq\sum_{\mathbf{v}\in[n_j-i_{0,j}-i_{0,j}',l,l]_{1\leq j\leq k}}D_k\left(\mathbf{n^{(k)}}-\mathbf{i^{(0)}}-\mathbf{i^{(0)}}'-\mathbf{1}^{(l)},l,p,d,\mathbf{v}\right)(1-p)^{i_{0,l}+i_{0,l}'-i_0-i_0'}\\
&\quad\cdot\prod_{j=1}^k(1-p)^{(i_0+i_0'-i_{0,j}-i_{0,j}')(n_j-i_{0,j}-i_{0,j}')}\binom{n_j-i_{0,j}-i_{0,j}'-2\cdot\mathbbm{1}_l(j)}{v_j}\left((1-p)^{i_{0,j}'-i_0'}-1\right)^{v_j}\\
&\qquad\qquad\cdot\left(1+\frac{(1-p)^{i_{0,j}-i_0}-1}{1-(1-p)^{i_0'-i_{0,j}'}}\right)^{v_j}.
\end{align*}
Thus
\begin{align*}
&\quad D_k'\left(\mathbf{n^{(k)}},l,p,d+1,\mathbf{i^{(0)}},\mathbf{i^{(0)}}'\right)\\
&\leq\sum_{\mathbf{v}\in[n_j-i_{0,j}-i_{0,j}',l,l]_{1\leq j\leq k}}D_k\left(\mathbf{n^{(k)}}-\mathbf{i^{(0)}}-\mathbf{i^{(0)}}'-\mathbf{1}^{(l)},l,p,d,\mathbf{v}\right)\\
&\quad\cdot\prod_{j=1}^k(1-p)^{(i_0+i_0'-i_{0,j}-i_{0,j}')(n_j-i_{0,j}-i_{0,j}'-\mathbbm{1}_l(j))}\binom{n_j-i_{0,j}-i_{0,j}'-2\cdot\mathbbm{1}_l(j)}{v_j}\left((1-p)^{i_{0,j}+i_{0,j}'-i_0-i_0'}-1\right)^{v_j}\\
&=D_k\left(\mathbf{n^{(k)}}-\mathbf{1}^{(l)},l,p,d+1,\mathbf{i^{(0)}}+\mathbf{i^{(0)}}'\right).
\end{align*}
Thus we have \eqref{eqn35}. Now assume that $l_1\neq l_2$. Pick two disjoint specific sets of $i_{0,j}$ and $i_{0,j}'$ vertices out of the labeled vertices in the partite set consisting of $n_j$ vertices, as well as two other vertices, say $v_1$ and $v_2$, the first being in the partite set consisting of $n_{l_1}$ vertices and the second being in the partite set consisting of $n_{l_2}$ vertices. We will let $C_k''\left(\mathbf{n^{(k)}},l_1,l_2,r,s,d,\mathbf{i^{(0)}},\mathbf{i^{(0)}}',\mathbf{i^{(0)}}''\right)$ denote the number of graphs in $A$ such that there is no path from any of the $i_0$ vertices to vertex $v_1$ that avoids vertex $v_2$ and consists of at most $d$ edges, no path from any of the $i_0'$ vertices to vertex $v_2$ that avoids vertex $(v_1)$ and consists of at most $d$ edges, and no paths from any of the $i_0''$ vertices to either vertex $v_1$ or $v_2$ that consists of at most $d$ edges. We can derive the recursive formula
\begin{align}
&\quad C_k''\left(\mathbf{n^{(k)}},l_1,l_2,r,s,d+1,\mathbf{i^{(0)}},\mathbf{i^{(0)}}',\mathbf{i^{(0)}}''\right)\nonumber\\
&<(s-r)^{i_{0,l_1}'-i_0'+i_{0,l_2}-i_0}s^{i_0'-i_{0,l_1}'+i_0-i_{0,l_2}}\nonumber\\
&\quad\cdot\prod_{j=1}^k\sum_{0\leq i_{1,j}+i_{1,j}'+i_{1,j}''\leq n_j-i_{0,j}'''-\mathbbm{1}_{l_1}(j)-\mathbbm{1}_{l_2}(j)}\binom{n_j-i_{0,j}'''-\mathbbm{1}_{l_1}(j)-\mathbbm{1}_{l_2}(j)}{i_{1,j},i_{1,j}',i_{1,j}'',n_j-i_{0,j}'''-\mathbbm{1}_{l_1}(j)-\mathbbm{1}_{l_2}(j)-i_{1,j}'''}\nonumber\\
&\qquad\cdot(s-r)^{\left(i_0'''-i_{0,j}'''\right)\left(n_j-i_{0,j}'''-i_{1,j}'''\right)}\nonumber\\
&\qquad\cdot(s-r)^{\left(i_0'+i_0''-i_{0,j}'-i_{0,j}''\right)i_{1,j}+\left(i_0+i_0''-i_{0,j}-i_{0,j}''\right)i_{1,j}'}\nonumber\\
&\qquad\cdot\left(s^{i_0-i_{0,j}}-(s-r)^{i_0-i_{0,j}}\right)^{i_{1,j}}\left(s^{i_0'-i_{0,j}'}-(s-r)^{i_0'-i_{0,j}'}\right)^{i_{1,j}'}\nonumber\\
&\qquad\cdot\left(s^{i_0''-i_{0,j}''}-(s-r)^{i_0''-i_{0,j}''}+(s-r)^{i_0''-i_{0,j}''}\left(s^{i_0-i_{0,j}}-(s-r)^{i_0-i_{0,j}}\right)\left(s^{i_0'-i_{0,j}'}-(s-r)^{i_0'-i_{0,j}'}\right)\right)^{i_{1,j}''}\nonumber\\
&\qquad\cdot s^{\frac{i_{0,j}'''\left(i_0'''-i_{0,j}'''\right)}{2}}C_k''\left(\mathbf{n^{(k)}}-\mathbf{i^{(0)}}''',l_1,l_2,p,d,\mathbf{i^{(1)}},\mathbf{i^{(1)}}',\mathbf{i^{(1)}}''\right)\label{eqn36}
\end{align}
valid so long as no expressions in \eqref{eqn36} do not evaluate to $0^0$. If, however, an expression does evaluate to $0^0$, it is replaced by $1$. Everything else is left unchanged. As well,
\begin{equation*}
C_k''\left(\mathbf{n^{(k)}},l_1,l_2,r,s,1,\mathbf{i^{(0)}},\mathbf{i^{(0)}}',\mathbf{i^{(0)}}''\right)=(s-r)^{i_0-i_{0,l_1}+i_0'-i_{0,l_2}'+2i_0''-i_{0,l_1}''-i_{0,l_2}''}s^{t-i_0+i_{0,l_1}-i_0'+i_{0,l_2}'-2i_0''+i_{0,l_1}''+i_{0,l_2}''}.
\end{equation*}
Let $D_k''\left(\mathbf{n^{(k)}},l_1,l_2,p,d,\mathbf{i^{(0)}},\mathbf{i^{(0)}}',\mathbf{i^{(0)}}''\right):=\frac{C_k''\left(\mathbf{n^{(k)}},l_1,l_2,r,s,d,\mathbf{i^{(0)}},\mathbf{i^{(0)}}',\mathbf{i^{(0)}}''\right)}{s^t}$ so that
\newline
$D_k''\left(\mathbf{n^{(k)}},l_1,l_2,p,d,\mathbf{i^{(0)}},\mathbf{i^{(0)}}',\mathbf{i^{(0)}}''\right)$ is the probability that the edge distance between $v_1$ and any of the $i_0$ vertices is greater than $d$, that the edge distance between $v'$, any of the $i_0'$ vertices is greater than $d$, that the edge distance between $v_1$ and any of the $i_0''$ vertices is greater than $d$, and that the edge distance between $v'$, any of the $i_0''$ vertices is greater than $d$. We prove that
\begin{align}
&\quad D_k''\left(\mathbf{n^{(k)}},l_1,l_2,p,d,\mathbf{i^{(0)}},\mathbf{i^{(0)}}',\mathbf{i^{(0)}}''\right)\nonumber\\
&\leq h_k\left(\mathbf{n^{(k)}},p,d,\mathbf{i^{(0)}}'''\right)\left(1-f\left(n,p,d,i_0'''\right)\right)^{\sum_{j=1}^k\left(i_{0,j}+i_{0,j}''\right)g_k\left(\mathbf{n^{(k)}}-\mathbf{1^{(l_2)}},j,l_1,p,d,d',\mathbf{i^{(0)}}'''\right)}\nonumber\\
&\quad\cdot\left(1-f\left(n,p,d,i_0'''\right)\right)^{\sum_{j=1}^k\left(i_{0,j}'+i_{0,j}''\right)g_k\left(\mathbf{n^{(k)}}-\mathbf{1^{(l_1)}},j,l_2,p,d,d',\mathbf{i^{(0)}}'''\right)}\label{eqn40}
\end{align}
assuming $1<\frac{n_j-\mathbbm{1}_{l_1}(j)-\mathbbm{1}_{l_2}(j)}{i_{0,j}+i_{0,j}'+i_{0,j}''+\left(4n_jp+4n_jp(4np)+(4n_jp)(4np)^2+\ldots+(4n_jp)(4np)^{d'-1}\right)\left(i_0+i_0'+i_0''\right)}$ for all $1\leq j\leq k$ where $d'\geq 0$ by induction on $d$. First, two lemmas.
\begin{lemma}\label{lem6}
Let $0<p<1$ and $0<C_1<1$. Also, let $y_1,y_2,y_3,t_1,t_2\geq 1$, $M\geq t_1+t_2$, and $N\geq y_1+y_2+y_3$. Suppose that
\begin{equation*}
0<C_2\leq\frac{1-(1-x)^M}{Mx}
\end{equation*}
and
\begin{equation*}
0<C_3\leq\frac{1-(1-p)^N}{Np}.    
\end{equation*}
Then
\begin{align*}
&\quad(1-p)^{y_1+y_2+y_3}+\left((1-p)^{y_2+y_3}-(1-p)^{y_1+y_2+y_3}\right)(1-C_1x)^{t_1}\\
&\quad+\left((1-p)^{y_1+y_3}-(1-p)^{y_1+y_2+y_3}\right)(1-C_1x)^{t_2}\\
&\quad+\left(1-(1-p)^{y_2+y_3}-(1-p)^{y_1+y_3}+(1-p)^{y_1+y_2+y_3}\right)(1-C_1x)^{t_1+t_2}\\
&<\left(1-C_1C_2C_3xp\right)^{(y_1+y_3)t_1+(y_2+y_3)t_2}.
\end{align*}
\end{lemma}
\begin{proof}
From Lemma \ref{lem3} we have
\begin{equation}\label{eqn37}
C_2\leq\frac{1-(1-x)^M}{Mx}\leq\frac{1-(1-x)^{t_1+t_2}}{x(t_1+t_2)}<\frac{1-(1-x)^{t_1}}{xt_1},\frac{1-(1-x)^{t_2}}{xt_2}
\end{equation}
and
\begin{equation}\label{eqn38}
C_3\leq\frac{1-(1-p)^N}{Np}<\frac{1-(1-p)^{y_1+y_3}}{p(y_1+y_3)},\frac{1-(1-p)^{y_2+y_3}}{p(y_2+y_3)}.
\end{equation}
We have the following:
\begin{align*}
&\quad(1-p)^{y_1+y_2+y_3}+\left((1-p)^{y_2+y_3}-(1-p)^{y_1+y_2+y_3}\right)(1-C_1x)^{t_1}\\
&\quad+\left((1-p)^{y_1+y_3}-(1-p)^{y_1+y_2+y_3}\right)(1-C_1x)^{t_2}\\
&\quad+\left(1-(1-p)^{y_2+y_3}-(1-p)^{y_1+y_3}+(1-p)^{y_1+y_2+y_3}\right)(1-C_1x)^{t_1+t_2}\\
&=1-\left((1-p)^{y_2+y_3}-(1-p)^{y_1+y_2+y_3}\right)\left(1-(1-C_1x)^{t_1}\right)\\
&\quad-\left((1-p)^{y_1+y_3}-(1-p)^{y_1+y_2+y_3}\right)\left(1-(1-C_1x)^{t_2}\right)\\
&\quad-\left(1-(1-p)^{y_2+y_3}-(1-p)^{y_1+y_3}+(1-p)^{y_1+y_2+y_3}\right)\left(1-(1-C_1x)^{t_1+t_2}\right)\\
&<1-C_1\left((1-p)^{y_2+y_3}-(1-p)^{y_1+y_2+y_3}\right)\left(1-(1-x)^{t_1}\right)\\
&\quad-C_1\left((1-p)^{y_1+y_3}-(1-p)^{y_1+y_2+y_3}\right)\left(1-(1-x)^{t_2}\right)\\
&\quad-C_1\left(1-(1-p)^{y_2+y_3}-(1-p)^{y_1+y_3}+(1-p)^{y_1+y_2+y_3}\right)\left(1-(1-x)^{t_1+t_2}\right)\\
&<1-C_1C_2xt_1\left((1-p)^{y_2+y_3}-(1-p)^{y_1+y_2+y_3}\right)\\
&\quad-C_1C_2xt_2\left((1-p)^{y_1+y_3}-(1-p)^{y_1+y_2+y_3}\right)\\
&\quad-C_1C_2x(t_1+t_2)\left(1-(1-p)^{y_2+y_3}-(1-p)^{y_1+y_3}+(1-p)^{y_1+y_2+y_3}\right)\left(1-(1-x)^{t_1+t_2}\right)\\
&=1-C_1C_2xt_1\left(1-(1-p)^{y_1+y_3}\right)-C_1C_2xt_2\left(1-(1-p)^{y_2+y_3}\right)\\
&<1-C_1C_2C_3pxt_1(y_1+y_3)-C_1C_2C_3pxt_2(y_2+y_3)\\
&<\left(1-C_1C_2C_3xp\right)^{(y_1+y_3)t_1+(y_2+y_3)t_2}
\end{align*}
with the first inequality following from Lemma \ref{lem3}, the second inequality following from \eqref{eqn37}, and the third inequality following from \eqref{eqn38}.
\end{proof}
\begin{lemma}\label{lem7}
Suppose $f_1,f_2,f_3,f_4:\mathbb{N}\times\mathbb{N}$ all satisfy $f_q(n,i+1)\leq f_q(n,i)\leq 1$ for all $i,n\in\mathbb{N}$ and $1\leq q\leq 4$. Let $r_1,r_2,r_3\in\mathbb{R}$, $r_1,r_2,r_3>0$ satisfy $\frac{4(r_1+r_2+r_3)}{r_1+r_2+r_3+1}<1$. Then for all $n\in\mathbb{N}$ and for all $\frac{4n(r_1+r_2+r_3)}{r_1+r_2+r_3+1}\leq t\leq n$ we have
\begin{align*}
&\quad\sum_{0\leq i_1+i_2+i_3\leq n}\binom{n}{i_1,i_2,i_3,n-i_1-i_2-i_3}r_1^{i_1}r_2^{i_2}r_3^{i_3}f_4\left(n,i_1+i_2+i_3\right)\\
&\qquad\qquad\qquad\cdot f_1(n,i_1+i_2+i_3)^{i_1}f_2(n,i_1+i_2+i_3)^{i_2}f_3(n,i_1+i_2+i_3)^{i_3}\\
&<\left(1-\frac{4}{5}\left(\frac{e}{3}\right)^t\right)^{-1}\sum_{0\leq i_1+i_2+i_3\leq\lfloor t\rfloor}\binom{n}{i_1,i_2,i_3,n-i_1-i_2-i_3}r_1^{i_1}r_2^{i_2}r_3^{i_3}f_4\left(n,i_1+i_2+i_3\right)\\
&\qquad\qquad\qquad\cdot f_1(n,i_1+i_2+i_3)^{i_1}f_2(n,i_1+i_2+i_3)^{i_2}f_3(n,i_1+i_2+i_3)^{i_3}.
\end{align*}
\end{lemma}
\begin{proof}
For all $0\leq i\leq n$ we have
\begin{align*}
&\quad\sum_{i_1+i_2+i_3=i}\binom{n}{i_1,i_2,i_3,i}r_1^{i_1}r_2^{i_2}r_3^{i_3}f_4(n,i)f_1\left(n,i\right)^{i_1}f_2\left(n,i\right)^{i_2}f_3\left(n,i\right)^{i_3}\\
&=\binom{n}{i}\left(r_1f_1(n,i)+r_2f_2(n,i)+r_3f_3(n,i)\right)^if_4(n,i).
\end{align*}
Noting that
\begin{equation*}
\frac{4n(r_1f_1(n,i)+r_2f_2(n,i)+r_3f_3(n,i))}{r_1f_1(n,i)+r_2f_2(n,i)+r_3f_3(n,i)+1}\leq\frac{4n(r_1+r_2+r_3)}{r_1+r_2+r_3+1}
\end{equation*}
the result follows from Lemma \ref{4nplemma}.
\end{proof}
We prove \eqref{eqn40} by induction on $d$. 
\begin{note}
For simplicity of notation, we define the following:
\begin{equation*}
\mathbf{i^{(0)}}''':=\mathbf{i^{(0)}}+\mathbf{i^{(0)}}'+\mathbf{i^{(0)}}''
\end{equation*}
\begin{equation*}
i_0''':=i_0+i_0'+i_0''
\end{equation*}
\begin{equation*}
i_{0,j}''':=i_{0,j}+i_{0,j}'+i_{0,j}''
\end{equation*}
\begin{equation*}
i_0^{(4)}:=i_0+i_0''
\end{equation*}
\begin{equation*}
i_{0,j}^{(4)}:=i_{0,j}+i_{0,j}''
\end{equation*}
\begin{equation*}
i_0^{(5)}:=i_0'+i_0''    
\end{equation*}
\begin{equation*}
i_{0,j}^{(5)}:=i_{0,j}'+i_{0,j}''
\end{equation*}
\begin{equation*}
\mathbf{i^{(1)}}''':=\mathbf{i^{(1)}}+\mathbf{i^{(1)}}'+\mathbf{i^{(1)}}'' 
i_1''':=i_1+i_1'+i_1''
\end{equation*}
\begin{equation*}
i_{1,j}''':=i_{1,j}+i_{1,j}'+i_{1,j}''.\\    
\end{equation*}
\end{note}
For $d\geq 2$, we have the following by \eqref{eqn36} with the second inequality following from Lemma \ref{lem6}:
\begin{align*}
&\quad D_k''\left(\mathbf{n^{(k)}},l_1,l_2,p,2,\mathbf{i^{(0)}},\mathbf{i^{(0)}}',\mathbf{i^{(0)}}''\right)\\
&<\prod_{j=1}^k\sum_{0\leq i_{1,j}+i_{1,j}'+i_{1,j}''\leq n_j-i_{0,j}'''-\mathbbm{1}_{l_1}(j)-\mathbbm{1}_{l_2}(j)}\left(1-(1-p)^{i_0-i_{0,j}}\right)^{i_{1,j}}(1-p)^{\left(i_0'+i_0''-i_{0,j}'-i_{0,j}''\right)i_{1,j}}\\
&\quad\cdot\left(1-(1-p)^{i_0'-i_{0,j}'}\right)^{i_{1,j}'}(1-p)^{\left(i_0+i_0''-i_{0,j}-i_{0,j}''\right)i_{1,j}'}\\
&\quad\cdot\left(1-(1-p)^{i_0''-i_{0,j}''}+(1-p)^{i_0''-i_{0,j}''}\left(1-(1-p)^{i_0-i_{0,j}}\right)\left(1-(1-p)^{i_0'-i_{0,j}'}\right)\right)^{i_{1,j}''}\\
&\quad\cdot\left((1-p)^{i_0'''-i_{0,j}'''}\right)^{n_j-i_{0,j}'''-i_{1,j}'''-\mathbbm{1}_{l_1}(j)-\mathbbm{1}_{l_2}(j)}\\
&\quad\cdot(1-p)^{i_0-i_{0,l_1}+i_0''-i_{0,l_1}''+i_0'-i_{0,l_2}'+i_0''-i_{0,l_2}''}\\
&\quad\cdot(1-p)^{i_1-i_{1,l_1}+i_1''-i_{1,l_1}''+i_1'-i_{1,l_2}'+i_1''-i_{1,l_2}''}\\
&=\prod_{\substack{j=1\\j\neq l_1,l_2}}^k\left((1-p)^2+p(1-p)^{i_0'+i_0''-i_{0,j}'-i_{0,j}''+1}+p(1-p)^{i_0+i_0''-i_{0,j}-i_{0,j}''+1}+p^2(1-p)^{i_0'''-i_{0,j}'''}\right)^{n_j-i_{0,j}'''}\\
&\quad\cdot(1-p)^{i_0-i_{0,l_1}+i_0''-i_{0,l_1}''}\left(1-p+p(1-p)^{i_0'+i_0''-i_{0,l_1}'-i_{0,l_1}''}\right)^{n_{l_1}-i_{0,l_1}'''-1}\\
&\quad\cdot(1-p)^{i_0'-i_{0,l_2}'+i_0''-i_{0,l_2}''}\left(1-p+p(1-p)^{i_0+i_0''-i_{0,l_2}-i_{0,l_2}''}\right)^{n_{l_2}-i_{0,l_2}'''-1}
\end{align*}
Thus
\begin{align*}
&\quad D_k''\left(\mathbf{n^{(k)}},l_1,l_2,p,2,\mathbf{i^{(0)}},\mathbf{i^{(0)}}',\mathbf{i^{(0)}}''\right)\\
&<\prod_{\substack{j=1\\j\neq l_1,l_2}}^k\left(1-f\left(n,p,2,i_0'''\right)\right)^{\left(i_0+i_0'+2i_0''-i_{0,j}-i_{0,j}'-2i_{0,j}''\right)\left(n_j-i_{0,j}'''\right)}\\
&\quad\cdot\left(1-f\left(n,p,2,i_0'''\right)\right)^{\left(i_0'+i_0''-i_{0,l_1}'-i_{0,l_1}''\right)\left(n_{l_1}-i_{0,l_1}'''-1\right)}\\
&\quad\cdot\left(1-f\left(n,p,2,i_0'''\right)\right)^{\left(i_0+i_0''-i_{0,l_2}-i_{0,l_2}''\right)\left(n_{l_2}-i_{0,l_2}'''-1\right)}
\end{align*}
so that
\begin{align*}
&\quad D_k''\left(\mathbf{n^{(k)}},l_1,l_2,p,2,\mathbf{i^{(0)}},\mathbf{i^{(0)}}',\mathbf{i^{(0)}}''\right)\\
&<\prod_{\substack{m=1\\m\neq l_1}}^k\left(1-f\left(n,p,2,i_0'''\right)\right)^{\left(i_{0,m}+i_{0.m}''\right)\left(n-n_{l_1}-n_m-i_0'''+i_{0,l_1}'''+i_{0,m}'''-1+\mathbbm{1}_{l_2}(m)\right)}\\
&\quad\cdot\left(1-f(n,p,2,i_0''')\right)^{\left(i_{0,l_1}+i_{0.l_1}''\right)\left(n-n_{l_1}-i_0'''+i_{0,l_1}'''-1\right)}\\
&\quad\cdot\prod_{\substack{m=1\\m\neq l_2}}^k\left(1-f\left(n,p,2,i_0'''\right)\right)^{\left(i_{0,m}'+i_{0.m}''\right)\left(n-n_{l_2}-n_m-i_0'''+i_{0,l_2}'''+i_{0,m}'''-1+\mathbbm{1}_{l_1}(m)\right)}\\
&\quad\cdot\left(1-f\left(n,p,2,i_0'''\right)\right)^{\left(i_{0,l_2}'+i_{0.l_2}''\right)\left(n-n_{l_2}-i_0'''+i_{0,l_2}'''-1\right)}.
\end{align*}
Suppose for some $d\geq 2$ \eqref{eqn40} holds for all $\mathbf{i^{(0)}}$, $\mathbf{i^{(0)}}'$, and $\mathbf{i^{(0)}}''$ in the stated ranges, and $0<p<1$. We will prove \eqref{eqn40} holds for $d+1$. We have
\begin{align*}
&\quad D_k''\left(\mathbf{n^{(k)}},l_1,l_2,p,2,\mathbf{i^{(0)}},\mathbf{i^{(0)}}',\mathbf{i^{(0)}}''\right)\\
&<\prod_{j=1}^k\sum_{0\leq i_{1,j}+i_{1,j}'+i_{1,j}''\leq n_j-i_{0,j}'''-\mathbbm{1}_{l_1}(j)-\mathbbm{1}_{l_2}(j)}\binom{n_j-i_{0,j}'''-\mathbbm{1}_{l_1}(j)-\mathbbm{1}_{l_2}(j)}{i_{1,j},i_{1,j}',i_{1,j}'',n_j-i_{0,j}'''-\mathbbm{1}_{l_1}(j)-\mathbbm{1}_{l_2}(j)-i_{1,j}'''}\\
&\quad\cdot(1-p)^{(i_0'''-i_{0,j}''')(n_j-i_{0,j}'''-\mathbbm{1}_{l_1}(j)-\mathbbm{1}_{l_2}(j))}\\
&\quad\cdot(1-p)^{i_0-i_{0,l_1}+i_0'-i_{0,l_2}'+2i_0''-i_{0,l_1}''-i_{0,l_2}''}\left((1-p)^{i_{0,j}-i_0}-1\right)^{i_{1,j}}\left((1-p)^{i_{0,j}'-i_0'}-1\right)^{i_{1,j}'}\\
&\quad\cdot\left((1-p)^{i_{0,j}'''-i_0'''}-(1-p)^{i_{0,j}-i_0}-(1-p)^{i_{0,j}'-i_0'}+1\right)^{i_{1,j}''}\\
&\quad\cdot D_k''\left(\mathbf{n^{(k)}}-\mathbf{i^{(0)}}''',l_1,l_2,p,d,\mathbf{i^{(1)}},\mathbf{i^{(1)}}',\mathbf{i^{(1)}}''\right).
\end{align*}
We divide into three cases.
\setcounter{case}{0}
\begin{case}{$\frac{n_j-\mathbbm{1}_j(l_1)-\mathbbm{1}_j(l_2)}{i_{0,j}'''+4n_jpi_0'''}\leq 1$ for all $1\leq j\leq k$.}
\normalfont
\newline
\newline
We have the following:
\begin{align*}
&\quad D_k''\left(\mathbf{n^{(k)}},l_1,l_2,p,d+1,\mathbf{i^{(0)}},\mathbf{i^{(0)}}',\mathbf{i^{(0)}}''\right)\\
&<\prod_{j=1}^k\sum_{0\leq i_{1,j}+i_{1,j}'+i_{1,j}''\leq n_j-i_{0,j}'''-\mathbbm{1}_{l_1}(j)-\mathbbm{1}_{l_2}(j)}h_k\left(\mathbf{n^{(k)}}-\mathbf{i^{(0)}}''',p,d,\mathbf{i^{(1)}}'''\right)\\
&\quad\cdot\binom{n_j-i_{0,j}'''-\mathbbm{1}_{l_1}(j)-\mathbbm{1}_{l_2}(j)}{i_{1,j},i_{1,j}',i_{1,j}'',n_j-i_{0,j}'''-\mathbbm{1}_{l_1}(j)-\mathbbm{1}_{l_2}(j)-i_{1,j}'''}\\
&\quad\cdot(1-p)^{\left(i_0'''-i_{0,j}'''\right)\left(n_j-i_{0,j}-i_{0,j}'-i_{0,j}''-\mathbbm{1}_{l_1}(j)-\mathbbm{1}_{l_2}(j)\right)}\\
&\quad\cdot(1-p)^{i_0-i_{0,l_1}+i_0'-i_{0,l_2}'+2i_0''-i_{0,l_1}''-i_{0,l_2}''}\left((1-p)^{i_{0,j}-i_0}-1\right)^{i_{1,j}}\left((1-p)^{i_{0,j}'-i_0'}-1\right)^{i_{1,j}'}\\
&\quad\cdot\left((1-p)^{i_{0,j}'''-i_0'''}-(1-p)^{i_{0,j}-i_0}-(1-p)^{i_{0,j}'-i_0'}+1\right)^{i_{1,j}''}\\
&\quad\cdot\left(1-f\left(n-i_0''',p,d,i_1'''\right)\right)^{\left(i_{1,j}+i_{1,j}''\right)g_k\left(\mathbf{n^{(k)}}-\mathbf{i^{(0)}}'''-\mathbf{1^{(l_2)}},j,l_1,p,d,0,\mathbf{i^{(1)}}'''\right)}\\
&\quad\cdot\left(1-f\left(n-i_0''',p,d,i_1'''\right)\right)^{\left(i_{1,j}'+i_{1,j}''\right)g_k\left(\mathbf{n^{(k)}}-\mathbf{i^{(0)}}'''-\mathbf{1^{(l_1)}},j,l_2,p,d,0,\mathbf{i^{(1)}}'''\right)}.
\end{align*}
We can deduce that
\begin{equation*}
h_k\left(\mathbf{n^{(k)}}-\mathbf{i^{(0)}}''',p,d,\mathbf{i^{(1)}}'''\right)\leq h_k\left(\mathbf{n^{(k)}}-\mathbf{i^{(0)}}''',p,d,4np\mathbf{i^{(0)}}'''\right)<h_k\left(\mathbf{n^{(k)}},p,d+1,\mathbf{i^{(0)}}'''\right)
\end{equation*}
for all $1\leq j\leq k$ and from Lemma \ref{lem3}, we can deduce that \begin{equation*}
f\left(n,p,d,4npi_0'''\right)<f\left(n-i_0''',p,d,i_1'''\right).
\end{equation*}
As well
\begin{equation*}
g_k\left(\mathbf{n^{(k)}}-\mathbf{i^{(0)}}'''-\mathbf{1^{(l_2)}},j,l_1,p,d,0,\mathbf{n^{(k)}}-\mathbf{i^{(0)}}'''-\mathbf{1^{(l_1)}}-\mathbf{1^{(l_2)}}\right)\leq g_k\left(\mathbf{n^{(k)}}-\mathbf{i^{(0)}}'''-\mathbf{1^{(l_2)}},j,l_1,p,d,0,\mathbf{i^{(1)}}'''\right)
\end{equation*}
and
\begin{align*}
g_k\left(\mathbf{n^{(k)}}-\mathbf{i^{(0)}}'''-\mathbf{1^{(l_1)}},j,l_2,p,d,0,\mathbf{n^{(k)}}-\mathbf{i^{(0)}}'''-\mathbf{1^{(l_1)}}-\mathbf{1^{(l_2)}}\right)\leq g_k\left(\mathbf{n^{(k)}}-\mathbf{i^{(0)}}'''-\mathbf{1^{(l_1)}},j,l_2,p,d,0,\mathbf{i^{(1)}}'''\right)
\end{align*}
Thus we have
\begin{align*}
&\quad D_k''\left(\mathbf{n^{(k)}},l_1,l_2,p,d+1,\mathbf{i^{(0)}},\mathbf{i^{(0)}}',\mathbf{i^{(0)}}''\right)\\    
&<h_k\left(\mathbf{n^{(k)}},p,d+1,\mathbf{i^{(0)}}'''\right)(1-p)^{i_0-i_{0,l_1}+i_0'-i_{0,l_2}'+2i_0''-i_{0,l_1}''-i_{0,l_2}''}\\\\
&\quad\cdot\prod_{j=1}^k(1-p)^{\left(i_0'''-i_{0,j}'''\right)\left(n_j-i_{0,j}'''-\mathbbm{1}_{l_1}(j)-\mathbbm{1}_{l_2}(j)\right)}\\
&\quad\cdot\sum_{0\leq i_{1,j}+i_{1,j}'+i_{1,j}''\leq n_j-i_{0,j}'''-\mathbbm{1}_{l_1}(j)-\mathbbm{1}_{l_2}(j)}\binom{n_j-i_{0,j}'''-\mathbbm{1}_{l_1}(j)-\mathbbm{1}_{l_2}(j)}{i_{1,j},i_{1,j}',i_{1,j}'',n_j-i_{0,j}'''-\mathbbm{1}_{l_1}(j)-\mathbbm{1}_{l_2}(j)-i_{1,j}'''}\\
&\quad\cdot\left((1-p)^{i_{0,j}-i_0}-1\right)^{i_{1,j}}\left((1-p)^{i_{0,j}'-i_0'}-1\right)^{i_{1,j}'}\\
&\quad\cdot\left((1-p)^{i_{0,j}'''-i_0'''}-(1-p)^{i_{0,j}-i_0}-(1-p)^{i_{0,j}'-i_0'}+1\right)^{i_{1,j}''}\\
&\quad\cdot\left(1-f\left(n,p,d,4npi_0'''\right)\right)^{\left(i_{1,j}+i_{1,j}''\right)g_k\left(\mathbf{n^{(k)}}-\mathbf{i^{(0)}}'''-\mathbf{1^{(l_2)}},j,l_1,p,d,0,\mathbf{n^{(k)}}-\mathbf{i^{(0)}}'''-\mathbf{1^{(l_1)}}-\mathbf{1^{(l_2)}}\right)}\\
&\quad\cdot\left(1-f\left(n,p,d,4np\left(i_0'''\right)\right)\right)^{\left(i_{1,j}'+i_{1,j}''\right)g_k\left(\mathbf{n^{(k)}}-\mathbf{i^{(0)}}'''-\mathbf{1^{(l_1)}},j,l_2,,p,d,0,\mathbf{n^{(k)}}-\mathbf{i^{(0)}}'''-\mathbf{1^{(l_1)}}-\mathbf{1^{(l_2)}}\right)}
\end{align*}
so that
\begin{align*}
&\quad D_k''\left(\mathbf{n^{(k)}},l_1,l_2,p,d+1,\mathbf{i^{(0)}},\mathbf{i^{(0)}}',\mathbf{i^{(0)}}''\right)\\    
&<h_k\left(\mathbf{n^{(k)}},p,d+1,\mathbf{i^{(0)}}'''\right)(1-p)^{i_0-i_{0,l_1}+i_0'-i_{0,l_2}'+2i_0''-i_{0,l_1}''-i_{0,l_2}''}\\\\
&\quad\cdot\prod_{j=1}^k(1-p)^{\left(i_0'''-i_{0,j}'''\right)\left(n_j-i_{0,j}'''-\mathbbm{1}_{l_1}(j)-\mathbbm{1}_{l_2}(j)\right)}\\
&\quad\cdot\left(1+\left((1-p)^{i_{0,j}-i_0}-1\right)\right.\\
&\qquad\qquad\left.\cdot\left(1-f\left(n,p,d,4npi_0'''\right)\right)^{g_k\left(\mathbf{n^{(k)}}-\mathbf{i^{(0)}}'''-\mathbf{1^{(l_2)}},j,l_1,p,d,0,\mathbf{n^{(k)}}-\mathbf{i^{(0)}}'''-\mathbf{1^{(l_1)}}-\mathbf{1^{(l_2)}}\right)}\right.\\
&\qquad\quad\left.+\left((1-p)^{i_{0,j}'-i_0'}-1\right)\right.\\
&\qquad\qquad\left.\cdot\left(1-f\left(n,p,d,4npi_0'''\right)\right)^{g_k\left(\mathbf{n^{(k)}}-\mathbf{i^{(0)}}'''-\mathbf{1^{(l_1)}},j,l_2,p,d,0,\mathbf{n^{(k)}}-\mathbf{i^{(0)}}'''-\mathbf{1^{(l_1)}}-\mathbf{1^{(l_2)}}\right)}\right.\\
&\qquad\quad\left.+\left((1-p)^{i_{0,j}'''-i_0'''}-(1-p)^{i_{0,j}-i_0}-(1-p)^{i_{0,j}'-i_0'}+1\right)\right.\\
&\qquad\qquad\left.\cdot\left(1-f\left(n,p,d,4npi_0'''\right)\right)^{g_k\left(\mathbf{n^{(k)}}-\mathbf{i^{(0)}}'''-\mathbf{1^{(l_2)}},j,l_1,p,d,0,\mathbf{n^{(k)}}-\mathbf{i^{(0)}}'''-\mathbf{1^{(l_1)}}-\mathbf{1^{(l_2)}}\right)}\right.\\
&\qquad\qquad\left.\cdot\left(1-f\left(n,p,d,4npi_0'''\right)\right)^{g_k\left(\mathbf{n^{(k)}}-\mathbf{i^{(0)}}'''-\mathbf{1^{(l_1)}},j,l_2,p,d,0,\mathbf{n^{(k)}}-\mathbf{i^{(0)}}'''-\mathbf{1^{(l_1)}}-\mathbf{1^{(l_2)}}\right)}\right)^{n_j-i_{0,j}'''-\mathbbm{1}_{l_1}(j)-\mathbbm{1}_{l_2}(j)}.
\end{align*}
Thus
\begin{align*}
&\quad D_k''\left(\mathbf{n^{(k)}},l_1,l_2,p,d+1,\mathbf{i^{(0)}},\mathbf{i^{(0)}}',\mathbf{i^{(0)}}''\right)\\   
&<h_k\left(\mathbf{n^{(k)}},p,d+1,\mathbf{i^{(0)}}'''\right)(1-p)^{i_0-i_{0,l_1}+i_0'-i_{0,l_2}'+2i_0''-i_{0,l_1}''-i_{0,l_2}''}\\\\
&\quad\cdot\prod_{j=1}^k\left((1-p)^{i_0'''-i_{0,j}'''}\right.\\
&\qquad\qquad\left.+\left((1-p)^{i_0'-i_{0,j}'+i_0''-i_{0,j}''}-(1-p)^{i_0'''-i_{0,j}'''}\right)\right.\\
&\qquad\qquad\quad\left.\cdot\left(1-f\left(n,p,d,4npi_0'''\right)\right)^{g_k\left(\mathbf{n^{(k)}}-\mathbf{i^{(0)}}'''-\mathbf{1^{(l_2)}},j,l_1,p,d,0,\mathbf{n^{(k)}}-\mathbf{i^{(0)}}'''-\mathbf{1^{(l_1)}}-\mathbf{1^{(l_2)}}\right)}\right.\\
&\qquad\qquad\left.+\left((1-p)^{i_0-i_{0,j}+i_0''-i_{0,j}''}-(1-p)^{i_0'''-i_{0,j}'''}\right)\right.\\
&\qquad\qquad\quad\left.\cdot\left(1-f\left(n,p,d,4npi_0'''\right)\right)^{g_k\left(\mathbf{n^{(k)}}-\mathbf{i^{(0)}}'''-\mathbf{1^{(l_1)}},j,l_2,p,d,0,\mathbf{n^{(k)}}-\mathbf{i^{(0)}}'''-\mathbf{1^{(l_1)}}-\mathbf{1^{(l_2)}}\right)}\right.\\
&\qquad\qquad\left.+\left((1-p)^{i_{0,j}'''-i_0'''}-(1-p)^{i_{0,j}-i_0}-(1-p)^{i_{0,j}'-i_0'}+1\right)\right.\\
&\qquad\qquad\quad\left.\cdot\left(1-f\left(n,p,d,4npi_0'''\right)\right)^{g_k\left(\mathbf{n^{(k)}}-\mathbf{i^{(0)}}'''-\mathbf{1^{(l_2)}},j,l_1,p,d,0,\mathbf{n^{(k)}}-\mathbf{i^{(0)}}'''-\mathbf{1^{(l_1)}}-\mathbf{1^{(l_2)}}\right)}\right.\\
&\qquad\qquad\quad\left.\cdot\left(1-f\left(n,p,d,4npi_0'''\right)\right)^{g_k\left(\mathbf{n^{(k)}}-\mathbf{i^{(0)}}'''-\mathbf{1^{(l_1)}},j,l_2,p,d,0,\mathbf{n^{(k)}}-\mathbf{i^{(0)}}'''-\mathbf{1^{(l_1)}}-\mathbf{1^{(l_2)}}\right)}\right)^{n_j-i_{0,j}'''-\mathbbm{1}_{l_1}(j)-\mathbbm{1}_{l_2}(j)}.
\end{align*}
We note that $g_k\left(\mathbf{n^{(k)}}-\mathbf{i^{(0)}}'''-\mathbf{1^{(l_1)}},j,j',p,d,0,\mathbf{n^{(k)}}-\mathbf{i^{(0)}}'''-\mathbf{1^{(l_1)}}-\mathbf{1^{(l_2)}}\right)<n^{d-1}$ for all $1\leq j\leq k$ and $j'=l_1,l_2$ and so, using Lemma \ref{lem6}, we have
\begin{align*}
&\quad D_k''\left(\mathbf{n^{(k)}},l_1,l_2,p,d+1,\mathbf{i^{(0)}},\mathbf{i^{(0)}}',\mathbf{i^{(0)}}''\right)\\
&<h_k\left(\mathbf{n^{(k)}},p,d+1,\mathbf{i^{(0)}}'''\right)(1-p)^{i_0-i_{0,l_1}+i_0'-i_{0,l_2}'+2i_0''-i_{0,l_1}''-i_{0,l_2}''}\\
&\quad\cdot\prod_{j=1}^k\left(1-f\left(n,p,d+1,i_0'''\right)\right)^{\left(i_0^{(4)}-i_{0,j}^{(4)}\right)g_k\left(\mathbf{n^{(k)}}-\mathbf{i^{(0)}}'''-\mathbf{1^{(l_2)}},j,l_1,p,d,0,\mathbf{n^{(k)}}-\mathbf{i^{(0)}}'''-\mathbf{1^{(l_1)}}-\mathbf{1^{(l_2)}}\right)\left(n_j-i_{0,j}'''-\mathbbm{1}_{l_1}(j)-\mathbbm{1}_{l_2}(j)\right)}\\
&\qquad\quad\cdot\left(1-f\left(n,p,d+1,i_0'''\right)\right)^{\left(i_0^{(5)}-i_{0,j}^{(5)}\right)g_k\left(\mathbf{n^{(k)}}-\mathbf{i^{(0)}}'''-\mathbf{1^{(l_1)}},j,l_2,p,d,0,\mathbf{n^{(k)}}-\mathbf{i^{(0)}}'''-\mathbf{1^{(l_1)}}-\mathbf{1^{(l_2)}}\right)\left(n_j-i_{0,j}'''-\mathbbm{1}_{l_1}(j)-\mathbbm{1}_{l_2}(j)\right)}\\
&<h_k\left(\mathbf{n^{(k)}},p,d+1,\mathbf{i^{(0)}}'''\right)\\
&\quad\cdot\left(1-f\left(n,p,d+1,i_0'''\right)\right)^{i_{0,l_1}^{(4)}\sum_{\substack{j=1\\j\neq l_1}}^kg_k\left(\mathbf{n^{(k)}}-\mathbf{i^{(0)}}'''-\mathbf{1^{(l_2)}},j,l_1,p,d,0,\mathbf{n^{(k)}}-\mathbf{i^{(0)}}'''-\mathbf{1^{(l_1)}}-\mathbf{1^{(l_2)}}\right)\left(n_j-i_{0,j}'''-\mathbbm{1}_{l_2}(j)\right)}\\
&\quad\cdot\prod_{\substack{q=1\\q\neq l_1}}^k\left(1-f\left(n,p,d+1,i_0'''\right)\right)^{i_{0,q}^{(4)}\left(1+\sum_{\substack{j=1\\j\neq q}}^kg_k\left(\mathbf{n^{(k)}}-\mathbf{i^{(0)}}'''-\mathbf{1^{(l_2)}},j,l_1,p,d,0,\mathbf{n^{(k)}}-\mathbf{i^{(0)}}'''-\mathbf{1^{(l_1)}}-\mathbf{1^{(l_2)}}\right)\left(n_j-i_{0,j}'''-\mathbbm{1}_{l_1}(j)-\mathbbm{1}_{l_2}(j)\right)\right)}\\
&\quad\cdot\left(1-f\left(n,p,d+1,i_0'''\right)\right)^{i_{0,l_2}^{(5)}\sum_{\substack{j=1\\j\neq l_2}}^kg_k\left(\mathbf{n^{(k)}}-\mathbf{i^{(0)}}'''-\mathbf{1^{(l_1)}},j,l_2,p,d,0,\mathbf{n^{(k)}}-\mathbf{i^{(0)}}'''-\mathbf{1^{(l_1)}}-\mathbf{1^{(l_2)}}\right)\left(n_j-i_{0,j}'''-\mathbbm{1}_{l_1}(j)\right)}\\
&\quad\cdot\prod_{\substack{q=1\\q\neq l_1}}^k\left(1-f\left(n,p,d+1,i_0'''\right)\right)^{i_{0,q}^{(5)}\left(1+\sum_{\substack{j=1\\j\neq q}}^kg_k\left(\mathbf{n^{(k)}}-\mathbf{i^{(0)}}'''-\mathbf{1^{(l_1)}},j,l_2,p,d,0,\mathbf{n^{(k)}}-\mathbf{i^{(0)}}'''-\mathbf{1^{(l_1)}}-\mathbf{1^{(l_2)}}\right)\left(n_j-i_{0,j}'''-\mathbbm{1}_{l_1}(j)-\mathbbm{1}_{l_2}(j)\right)\right)}.
\end{align*}
We can deduce that
\begin{align}
&\quad\sum_{\substack{j=1\\j\neq l_1}}^k\left(n_j-i_{0,j}'''-\mathbbm{1}_{l_2}(j)\right)g_k\left(\mathbf{n^{(k)}}-\mathbf{i^{(0)}}'''-\mathbf{1^{(l_2)}},j,l_1,p,d,0,\mathbf{n^{(k)}}-\mathbf{i^{(0)}}'''-\mathbf{1^{(l_1)}}-\mathbf{1^{(l_2)}}\right)\nonumber\\
&\geq g_k\left(\mathbf{n^{(k)}}-\mathbf{1^{(l_2)}},l_1,l_1,p,d,0,\mathbf{i^{(0)}}'''\right)\label{eqn41}
\end{align}
and
\begin{align}
&\quad\sum_{\substack{j=1\\j\neq l_2}}^k\left(n_j-i_{0,j}'''-\mathbbm{1}_{l_1}(j)\right)g_k\left(\mathbf{n^{(k)}}-\mathbf{i^{(0)}}'''-\mathbf{1^{(l_1)}},j,l_2,p,d,0,\mathbf{n^{(k)}}-\mathbf{i^{(0)}}'''-\mathbf{1^{(l_1)}}-\mathbf{1^{(l_2)}}\right)\nonumber\\
&\geq g_k\left(\mathbf{n^{(k)}}-\mathbf{1^{(l_1)}},l_2,l_2,p,d,0,\mathbf{i^{(0)}}'''\right).\label{eqn42}
\end{align}
Also,
\begin{align}
&\quad 1+\sum_{\substack{j=1\\j\neq q}}^k\left(n_j-i_{0,j}'''-\mathbbm{1}_{l_1}(j)-\mathbbm{1}_{l_2}(j)\right)g_k\left(\mathbf{n^{(k)}}-\mathbf{i^{(0)}}'''-\mathbf{1^{(l_2)}},j,l_1,p,d,0,\mathbf{n^{(k)}}-\mathbf{i^{(0)}}'''-\mathbf{1^{(l_1)}}-\mathbf{1^{(l_2)}}\right)\nonumber\\
&\geq g_k\left(\mathbf{n^{(k)}}-\mathbf{1^{(l_2)}},q,l_1,p,d,0,\mathbf{i^{(0)}}'''\right)\label{eqn43}
\end{align}
for all $\leq q\leq k$ with $q\neq l_1$, and
\begin{align}
&\quad 1+\sum_{\substack{j=1\\j\neq q}}^k\left(n_j-i_{0,j}'''-\mathbbm{1}_{l_1}(j)-\mathbbm{1}_{l_2}(j)\right)g_k\left(\mathbf{n^{(k)}}-\mathbf{i^{(0)}}'''-\mathbf{1^{(l_1)}},j,l_2,p,d,0,\mathbf{n^{(k)}}-\mathbf{i^{(0)}}'''-\mathbf{1^{(l_1)}}-\mathbf{1^{(l_2)}}\right)\nonumber\\
&\geq g_k\left(\mathbf{n^{(k)}}-\mathbf{1^{(l_1)}},q,l_2,p,d,0,\mathbf{i^{(0)}}'''\right)\label{eqn44}
\end{align}
for all $\leq q\leq k$ with $q\neq l_2$. Thus we have \eqref{eqn40}.
\end{case}
\begin{case}{$1<\frac{n_j-\mathbbm{1}_{l_1}(j)-\mathbbm{1}_{l_2}(j)}{i_{0,j}'''+4n_jpi_0'''}$ for some $1\leq j\leq k$.}
\normalfont
\newline
\newline
We proceed exactly as in Case $1$, except that for every $j$ with $1<\frac{n_j-\mathbbm{1}_{l_1}(j)-\mathbbm{1}_{l_2}(j)}{i_{0,j}'''+4n_jpi_0'''}$, we add in the condition $i_{1,j}'''\leq 4n_jpi_0'''$ over the summations of $i_{1,j},i_{1,j}',i_{1,j}''$ and multiply all the expressions following $D_k''\left(\mathbf{n^{(k)}},l_1,l_2,p,d+1,\mathbf{i^{(0)}},\mathbf{i^{(0)}}',\mathbf{i^{(0)}}''\right)<$ by $\left(1-\frac{4}{5}\left(\frac{e}{3}\right)^{4n_jpi_0'''}\right)^{-1}$, which can be justified by Lemma \ref{lem7}.
\end{case}
\begin{case}{$1<\frac{n_j-\mathbbm{1}_{l_1}(j)-\mathbbm{1}_{l_2}(j)}{i_{0,j}+i_{0,j}'+i_{0,j}''+\left(4n_jp+4n_jp(4np)+(4n_jp)(4np)^2+\ldots+(4n_jp)(4np)^{d'}\right)\left(i_0+i_0'+i_0''\right)}$ for all $1\leq j\leq k$.}
\normalfont
\newline
\newline
We proceed exactly as in Case $1$, except that for every $j$ with $1<\frac{n_j-\mathbbm{1}_{l_1}(j)-\mathbbm{1}_{l_2}(j)}{i_{0,j}+i_{0,j}'+i_{0,j}''+4n_jpi_0'''}$, we add in the condition $i_{1,j}'''\leq 4n_jpi_0'''$ over the summations of $i_{1,j},i_{1,j}',i_{1,j}''$ and multiply all the expressions following $D_k''\left(\mathbf{n^{(k)}},l_1,l_2,p,d+1,\mathbf{i^{(0)}},\mathbf{i^{(0)}}',\mathbf{i^{(0)}}''\right)<$ by $\left(1-\frac{4}{5}\left(\frac{e}{3}\right)^{4n_jpi_0'''}\right)^{-1}$, which can be justified by Lemma \ref{lem7}. As well, in all the functions $g_k'$, we replace $0$ with $d'$ (except where we replace it with $d'+1$ in the right-hand side of the inequalities \eqref{eqn41}, \eqref{eqn42}, \eqref{eqn43}, and \eqref{eqn44}) and replace $\mathbf{n^{(k)}}-\mathbf{i^{(0)}}'''-\mathbf{1^{(l_1)}}-\mathbf{1^{(l_2)}}$ with $4np\mathbf{i^{(0)}}'''$.
\end{case}
We can deduce that $D_k\left(\mathbf{n^{(k)}},l,p,d,\mathbf{i^{(0)}}\right)< D_k\left(\mathbf{n^{(k)}}-\mathbf{1^{(l)}},l,p,d,\mathbf{i^{(0)}}\right)$. Thus we have $n(b_1,b_2)<s^tD_k\left(\mathbf{n^{(k)}}-\mathbf{1^{(l)}},l,p,d,\mathbf{1^{(j_1)}}+\mathbf{1^{(j_2)}}\right)$ whenever $b_1$ and $b_2$ are not the same pair of vertices with one pair having its vertices in the partite set consisting of $n_l$ and $n_{j_1}$ vertices, and the other pair having its vertices in the partite set consisting of $n_l$ and $n_{j_2}$ vertices. Also, we have $n(b_1,b_2)\leq D_k''\left(\mathbf{n^{(k)}},l_1,l_2,p,d,\mathbf{1^{(j_1)}},\mathbf{1^{(j_2)}},\mathbf{0}\right)$ whenever $b_1$ and $b_2$ are not the same pair of vertices with one pair having its vertices in the partite set consisting of $n_{l_1}$ and $n_{j_1}$ vertices, and the other pair having its vertices in the partite set consisting of $n_{l_2}$ and $n_{j_2}$ vertices with $l_1\neq l_2$. Thus, by the Tur\'an sieve, we have
\begin{align*}
&\quad P\left(G\left(\mathbf{n^{(k)}},p\right),d\right)\\
&<\frac{\sum_{1\leq l\leq k}\left(n_l\left(n_l-1\right)\left(n_l-2\right)+\binom{n_l}{2}\binom{n_l-2}{2}\right)D_k\left(\mathbf{n^{(k)}}-\mathbf{1^{(l)}},l,p,d,2\cdot\mathbf{1^{(l)}}\right)}{\left(\sum_{l=1}^k\binom{n_l}{2}D\left(\mathbf{n^{(k)}},l,p,d,\mathbf{1^{(l)}}\right)+\sum_{1\leq j<l\leq k}n_jn_lD\left(\mathbf{n^{(k)}},l,p,d,\mathbf{1^{(j)}}\right)\right)^2}\\
&\quad+\frac{\sum_{1\leq j\neq l\leq k }\left(n_l\left(n_l-1\right)n_j+2\binom{n_l}{2}\left(n_l-2\right)n_j\right)D_k\left(\mathbf{n^{(k)}}-\mathbf{1^{(l)}},l,p,d,\mathbf{1^{(l)}}+\mathbf{1^{(j)}}\right)}{\left(\sum_{l=1}^k\binom{n_l}{2}D\left(\mathbf{n^{(k)}},l,p,d,\mathbf{1^{(l)}}\right)+\sum_{1\leq j<l\leq k}n_jn_lD\left(\mathbf{n^{(k)}},l,p,d,\mathbf{1^{(j)}}\right)\right)^2}\\
&\quad+\frac{\sum_{1\leq j\neq l\leq k }\left(n_ln_j\left(n_j-1\right)+n_ln_j\left(n_l-1\right)\left(n_j-1\right)\right)D_k\left(\mathbf{n^{(k)}}-\mathbf{1^{(l)}},l,p,d,2\cdot\mathbf{1^{(j)}}\right)}{\left(\sum_{l=1}^k\binom{n_l}{2}D\left(\mathbf{n^{(k)}},l,p,d,\mathbf{1^{(l)}}\right)+\sum_{1\leq j<l\leq k}n_jn_lD\left(\mathbf{n^{(k)}},l,p,d,\mathbf{1^{(j)}}\right)\right)^2}\\
&\quad+\frac{\sum_{1\leq j\neq l\leq k}\binom{n_l}{2}\binom{n_j}{2}D_k''\left(\mathbf{n^{(k)}},l_1,l_2,p,d,\mathbf{1^{(j_1)}},\mathbf{1^{(j_2)}},\mathbf{0}\right)}{\left(\sum_{l=1}^k\binom{n_l}{2}D\left(\mathbf{n^{(k)}},l,p,d,\mathbf{1^{(l)}}\right)+\sum_{1\leq j<l\leq k}n_jn_lD\left(\mathbf{n^{(k)}},l,p,d,\mathbf{1^{(j)}}\right)\right)^2}\\
&\quad+\frac{\sum_{\substack{1\leq j_1,j_2,l\leq k\\l,j_1,j_2\text{ all distinct}}}n_l^2n_{j_1}n_{j_2}D_k\left(\mathbf{n^{(k)}}-\mathbf{1^{(l)}},l,p,d,\mathbf{1^{(j_1)}}+\mathbf{1^{(j_2)}}\right)}{\left(\sum_{l=1}^k\binom{n_l}{2}D\left(\mathbf{n^{(k)}},l,p,d,\mathbf{1^{(l)}}\right)+\sum_{1\leq j<l\leq k}n_jn_lD\left(\mathbf{n^{(k)}},l,p,d,\mathbf{1^{(j)}}\right)\right)^2}\\
&\quad+\frac{\sum_{\substack{1\leq j,l_1,l_2\leq k\\j,l_1,l_2\text{ all distinct}}}\binom{n_{l_1}}{2}n_{l_2}n_jD_k''\left(\mathbf{n^{(k)}},l_1,l_2,p,d,\mathbf{1^{(l_1)}},\mathbf{1^{(j)}},\mathbf{0}\right)}{\left(\sum_{l=1}^k\binom{n_l}{2}D\left(\mathbf{n^{(k)}},l,p,d,\mathbf{1^{(l)}}\right)+\sum_{1\leq j<l\leq k}n_jn_lD\left(\mathbf{n^{(k)}},l,p,d,\mathbf{1^{(j)}}\right)\right)^2}\\
&\quad+\frac{\sum_{\substack{1\leq l_1,l_2,j_1,j_2\leq k\\l_1,l_2,j_1,j_2\text{ all distinct}}}\frac{l_1l_2j_1j_2}{4}D_k''\left(\mathbf{n^{(k)}},l_1,l_2,p,d,\mathbf{1^{(j_1)}},\mathbf{1^{(j_2)}},\mathbf{0}\right)}{\left(\sum_{l=1}^k\binom{n_l}{2}D\left(\mathbf{n^{(k)}},l,p,d,\mathbf{1^{(l)}}\right)+\sum_{1\leq j<l\leq k}n_jn_lD\left(\mathbf{n^{(k)}},l,p,d,\mathbf{1^{(j)}}\right)\right)^2}\\
&\quad-1+\frac{1}{\sum_{l=1}^k\binom{n_l}{2}D\left(\mathbf{n^{(k)}},l,p,d,\mathbf{1^{(l)}}\right)+\sum_{1\leq j<l\leq k}n_jn_lD\left(\mathbf{n^{(k)}},l,p,d,\mathbf{1^{(j)}}\right)}.
\end{align*}
Theorem \ref{bigthmkpartite} follows.
\section{Restricted Results for $k$-partite Graphs with diameter $d\geq 3$}
We impose further restrictions on $n_1,n_2,\ldots,n_k$, and $p$ in Theorem \ref{bigthmkpartite} to make our result more clear and meaningful. Since the case $d=2$ was treated in Section $3$, we assume $d\geq 3$. The result is Corollary \ref{bigcor3}.
\begin{corollary}\label{bigcor3}
Let $d\geq 3$ be fixed. Suppose that \eqref{cond1} and \eqref{cond2} hold. Also suppose that $n_1\leq n_2\leq\ldots\leq n_k$ and
\begin{equation}\label{eqn45}
n^{1-\frac{1}{2d}+\frac{1}{2d^2}}\leq n_1.
\end{equation}
Then we have
\begin{equation}\label{eqn51}
 P(G(\mathbf{n^{(k)}},p),d)>1-\left(1+2^{k+1}4^ddn^{\frac{-1}{2d^2}}\right)\sum_{1\leq j,l\leq k}\frac{n_jn_l}{2}\left(1-p^d\right)^{u\left(\mathbf{n^{(k)}},d-1,j,l\right)}.
\end{equation}
and
\begin{equation}\label{eqn52}
P(G(\mathbf{n^{(k)}},p),d)<4^dd2^{k+3}n^{\frac{-1}{2d^2}}+\left(\sum_{1\leq j,l\leq k}\frac{n_jn_l}{2}\left(1-p^d\right)^{u\left(\mathbf{n^{(k)}},d-1,j,l\right)}\right)^{-1}\left(1+5n^{\frac{-1}{2d^2}}\right).
\end{equation}
If we have $\frac{n}{k}-1<n_j<\frac{n}{k}+1$ for all $1\leq j\leq k$, i.e. we are dealing with $k$-partite Tur\'an graphs, then we have
\begin{equation}\label{eqn55}
P(G(\mathbf{n^{(k)}},p),d)>1-\frac{n^2\left(1-p^d\right)^{\left(n-\frac{n}{k}-1\right)^{d-1}}}{2k}\left(1+(k-1)\left(1-p^d\right)^{-\left(n-\frac{n}{k}-1\right)^{d-2}\left(1+\frac{k}{n}\right)}\right)\left(1+\frac{k}{n}\right)^2
\end{equation}
and
\begin{equation}\label{eqn56}
P(G(\mathbf{n^{(k)}},p),d)<\frac{2k}{n^2\left(1-p^d\right)^{\left(n-\frac{n}{k}+1\right)^{d-1}}}\left(1+(k-1)\left(1-p^d\right)^{-\left(n-\frac{n}{k}+1\right)^{d-2}\left(\frac{n}{k}-1\right)}\right)^{-1}\left(1-\frac{2k}{n}\right)^{-1}.
\end{equation}
\end{corollary}
We prove Corollary \ref{bigcor3}. Suppose \eqref{cond1}, \eqref{cond2}, and \eqref{eqn45} all hold. As in the proof of Corollary \ref{bigcor} we can derive \eqref{eqn3}. From \eqref{cond1}, \eqref{cond2}, and \eqref{eqn45}, we can deduce that $\frac{3}{n_k}+\frac{64p(4np)^{d-5}}{7}<\frac{1}{4}$. Thus all of the $h_k$ functions in Theorem \ref{bigthmkpartite} are bounded above by
\begin{align}
&\quad\prod_{j=1}^k\left(1-\frac{4}{5}\left(\frac{e}{3}\right)^{4p\left(n_j-3-8n_jp\left(1+4np+\ldots+(4np)^{d-5}\right)\right)}\right)^{2-d}\nonumber\\
&<\left(1-\frac{4}{5}\left(\frac{e}{3}\right)^{4pn_j\left(1-\frac{1}{4}\right)}\right)^{-dk}\nonumber\\
&<\left(1-\frac{4}{5}\left(\frac{e}{3}\right)^{3n^{1/(2d)}}\right)^{-dk}\nonumber\\
&<\left(1+4d\left(\frac{e}{3}\right)^{3n^{1/(2d)}}\right)^k\nonumber\\
&<\left(1+4^ddn^{-1/(2d^2)}\right)^k\label{eqn46}
\end{align}
with the last two inequalities following from \eqref{cond2}. From \eqref{cond1}, \eqref{cond2}, and \eqref{eqn45}, we can derive
\begin{equation*}
\frac{8n_jp(4np)^{d-2}}{1-\frac{1}{4np}}\leq n_j-4
\end{equation*}
for all $1\leq j\leq k$ and so we can apply Theorem \ref{bigthmkpartite} with $d'=d-1$. All of the $g_k$ and $g_k'$ functions in Theorem \ref{bigthmkpartite} wiht $d'=d-1$ are bounded below by
\begin{align}
&\quad v_1\left(\mathbf{n^{(k)}}-\sum_{r=1}^k\mathbf{1^{(r)}},2\cdot\sum_{r=1}^k\mathbf{(1)^{(r)}},p,d-1,j,l\right)\nonumber\\
&>\sum_{(i_1,i_2,\ldots,i_{d-1})\in[k]^{d-1,q,l\neq}}\prod_{m=1}^{d-1}n_{i_m}\left(1-\frac{4}{n_{i_m}}-\frac{8p(4np)^{d-3}}{1-\frac{1}{4np}}\right)\nonumber\\
&>\sum_{(i_1,i_2,\ldots,i_{d-1})\in[k]^{d-1,q,l\neq}}\prod_{m=1}^{d-1}n_{i_m}\left(1-\frac{4}{n_{i_m}}-\frac{64p(4np)^{d-3}}{7}\right)\nonumber\\
&>\left(1-4^{d-1}n^{-3/(2d)}\right)^{d-1}u\left(\mathbf{n^{(k)}},d-1,j,l\right)\nonumber\\
&>\left(1-4^{d-1}dn^{-3/(2d)}\right)u\left(\mathbf{n^{(k)}},d-1,j,l\right)\nonumber\\
&>\left(1-4^{d-1}dn^{\frac{-1}{2d}-\frac{1}{2d^2}}\right)u\left(\mathbf{n^{(k)}},d-1,j,l\right)\label{eqn47}
\end{align}
with the last three inequalities being derived from \eqref{cond1}, \eqref{cond2}, and \eqref{eqn45}. Making use of \eqref{eqn3}, \eqref{eqn46}, and \eqref{eqn47}, we obtain
\begin{align*}
P(G(\mathbf{n^{(k)}},p),d)&>1-\sum_{l=1}^k\binom{n_l}{2}\left(1+4^ddn^{\frac{-1}{2d^2}}\right)^k\left(1-p^d\right)^{\left(1-4^{d-1}dn^{\frac{-1}{2d}-\frac{1}{2d^2}}\right)^2u\left(\mathbf{n^{(k)}},d-1,l,l\right)}\\
&\qquad-\sum_{1\leq j<l\leq k}n_jn_l\left(1+4^ddn^{\frac{-1}{2d^2}}\right)^k\left(1-p^d\right)^{\left(1-4^{d-1}dn^{\frac{-1}{2d}-\frac{1}{2d^2}}\right)^2u\left(\mathbf{n^{(k)}},d-1,j,l\right)}.
\end{align*}
As in the proof of Corollary \ref{bigcor}, we deduce
\begin{equation*}
P(G(\mathbf{n^{(k)}},p),d)>1-\left(1+2^{k+1}4^ddn^{\frac{-1}{2d^2}}\right)\sum_{1\leq j,l\leq k}\frac{n_jn_l}{2}\left(1-p^d\right)^{u\left(\mathbf{n^{(k)}},d-1,j,l\right)}.
\end{equation*}
By \eqref{cond2} and \eqref{eqn45}, we have
\begin{equation*}
n_jp\geq n^{\frac{1}{2d}+\frac{1}{2d^2}}>4^dd\geq 192
\end{equation*}
for all $1\leq j\leq k$. So we have
\begin{align*}
\sum_{m=0}^{d-1}u\left(\mathbf{n^{(k)}},m,j,l\right)p^{m-d+1}&<u\left(\mathbf{n^{(k)}},d-1,j,l\right)\left(1-n^{\frac{-1}{2d}-\frac{1}{2d^2}}\right)^{-1}\\
&<u\left(\mathbf{n^{(k)}},d-1,j,l\right)\left(1+\frac{192}{191}n^{\frac{-1}{2d}-\frac{1}{2d^2}}\right).
\end{align*}
We can derive
\begin{equation*}
2\cdot\frac{192}{191}p^dn^{d-1-\frac{1}{2d}-\frac{1}{2d^2}}\leq 2\cdot\frac{192}{191}n^{\frac{-1}{2d^2}}<\frac{1}{32}.
\end{equation*}
Thus we have
\begin{equation*}
(1-p^d)^{-2u\left(\mathbf{n^{(k)}},d-1,j,l\right)\frac{192}{191}n^{\frac{-1}{2d}-\frac{1}{2d^2}}}<\left(1-p^d\right)^{-\frac{384}{191}n^{d-1-\frac{1}{2d}-\frac{1}{2d^2}}}<\left(1+\frac{384\cdot 32n^{\frac{-1}{2d^2}}}{191\cdot 31}\right)<1+3n^{\frac{-1}{2d^2}}.
\end{equation*}
Thus
\begin{align}
&\quad\left(\sum_{l=1}^k\binom{n_l}{2}(1-p^d)^{\sum_{m=0}^{d-1}u\left(\mathbf{n^{(k)}},m,l,l\right)p^{m-d+1}}+\sum_{1\leq j<l\leq k}n_jn_l(1-p^d)^{\sum_{m=0}^{d-1}u\left(\mathbf{n^{(k)}},m,j,l\right)p^{m-d+1}}\right)^{-1}\nonumber\\
&<\left(\sum_{l=1}^k\binom{n_l}{2}(1-p^d)^{u\left(\mathbf{n^{(k)}},d-1,l,l\right)}+\sum_{1\leq j<l\leq k}n_jn_l(1-p^d)^{u\left(\mathbf{n^{(k)}},d-1,j,l\right)}\right)^{-1}\left(1+3n^{\frac{-1}{2d^2}}\right)\label{eqn48}
\end{align}
and
\begin{align}
&\quad\left(\sum_{l=1}^k\binom{n_l}{2}(1-p^d)^{\sum_{m=0}^{d-1}u\left(\mathbf{n^{(k)}},m,l,l\right)p^{m-d+1}}+\sum_{1\leq j<l\leq k}n_jn_l(1-p^d)^{\sum_{m=0}^{d-1}u\left(\mathbf{n^{(k)}},m,j,l\right)p^{m-d+1}}\right)^{-2}\nonumber\\
&<\left(\sum_{l=1}^k\binom{n_l}{2}(1-p^d)^{u\left(\mathbf{n^{(k)}},d-1,l,l\right)}+\sum_{1\leq j<l\leq k}n_jn_l(1-p^d)^{u\left(\mathbf{n^{(k)}},d-1,j,l\right)}\right)^{-2}\left(1+3n^{\frac{-1}{2d^2}}\right)\label{eqn49}
\end{align}
If we let $h_k()$, $g_{k,j,l}$, and $g_{k,j,l}'$ stand for any of the $h_k$, $g_k$, and $g_k'$ functions respectively where $j$ and $l$ are in the second and third arguments respectively in the functions in Theorem \ref{bigthmkpartite}, then for any $1\leq j_1,j_2,l_1,l_2\leq k$ we have
\begin{align}
&\quad h_k()\left(1-f(n,p,d,2)\right)^{g_{k,j_1,l_1}+g_{k,j_2,l_2}}\nonumber\\
&<\left(1+4^ddn^{\frac{-1}{2d^2}}\right)^k\left(1-p^d\right)^{\left(1-4^{d-1}dn^{\frac{-1}{2d}-\frac{1}{2d^2}}\right)^2\left(u\left(\mathbf{n^{(k)}},d-1,j_1,l_1\right)+u\left(\mathbf{n^{(k)}},d-1,j_2,l_2\right)\right)}\nonumber\\
&<\left(1+4^ddn^{\frac{-1}{2d^2}}\right)^{k+2}\left(1-p^d\right)^{u\left(\mathbf{n^{(k)}},d-1,j_1,l_1\right)+u\left(\mathbf{n^{(k)}},d-1,j_2,l_2\right)}\label{eqn50}
\end{align}
where we also have the above if we replace $g_{k,j_1,l_1}$ with $g_{k,j_1,l_1}'$ or if we replace $g_{k,j_2,l_2}$ with $g_{k,j_2,l_2}'$ making use of \eqref{eqn3}, \eqref{eqn46}, and \eqref{eqn47}. Substituting \eqref{eqn48}, \eqref{eqn49}, and \eqref{eqn50} into the upper bound in Theorem \ref{bigthmkpartite} and noting that
\begin{equation*}
n_l\left(n_l-1\right)\left(n_l-2\right)+\binom{n_l}{2}\binom{n_l-2}{2}<\binom{n_l}{2}^2,    
\end{equation*}
\begin{equation*}
n_l\left(n_l-1\right)n_j+2\binom{n_l}{2}\left(n_l-2\right)n_j<2\binom{n_l}{2}n_ln_j,    
\end{equation*}
and
\begin{equation*}
n_ln_j\left(n_j-1\right)n_ln_j\left(n_l-1\right)\left(n_j-1\right)<n_l^2n_j^2
\end{equation*}
for all $1\leq l\neq j\leq k$ we obtain
\begin{align*}
P(G(\mathbf{n^{(k)}},p),d)&<\left(1+4^ddn^{\frac{-1}{2d^2}}\right)^{k+3}-1\\
&\quad+\left(\sum_{l=1}^k\binom{n_l}{2}(1-p^d)^{u\left(\mathbf{n^{(k)}},d-1,l,l\right)}+\sum_{1\leq j<l\leq k}n_jn_l(1-p^d)^{u\left(\mathbf{n^{(k)}},d-1,j,l\right)}\right)^{-1}\left(1+3n^{\frac{-1}{2d^2}}\right)\\
&<4^dd2^{k+3}n^{\frac{-1}{2d^2}}\\
&\quad+\left(\sum_{1\leq j,l\leq k}\frac{n_jn_l}{2}\left(1-p^d\right)^{u\left(\mathbf{n^{(k)}},d-1,j,l\right)}\right)^{-1}\left(1-\frac{1}{n_1}\right)^{-1}\left(1+3n^{\frac{-1}{2d^2}}\right)\\
&<4^dd2^{k+3}n^{\frac{-1}{2d^2}}\\
&\quad+\left(\sum_{1\leq j,l\leq k}\frac{n_jn_l}{2}\left(1-p^d\right)^{u\left(\mathbf{n^{(k)}},d-1,j,l\right)}\right)^{-1}\left(1-n^{\frac{1}{2d}+\frac{1}{2d^2}-1}\right)^{-1}\left(1+3n^{\frac{-1}{2d^2}}\right)\\
&<4^dd2^{k+3}n^{\frac{-1}{2d^2}}+\left(\sum_{1\leq j,l\leq k}\frac{n_jn_l}{2}\left(1-p^d\right)^{u\left(\mathbf{n^{(k)}},d-1,j,l\right)}\right)^{-1}\left(1+5n^{\frac{-1}{2d^2}}\right).
\end{align*}
To get the results for the $k$-partite Tur\'an graphs, we proceed as follows. In this case we know that $\frac{n}{k}-1<n_j<\frac{n}{k}+1$ for all $1\leq j\leq k$ and so we can deduce that
\begin{equation}\label{eqn53}
\left(n-\frac{n}{k}-1\right)^{d-2}\left(n-\frac{2n}{k}-2\right)<u\left(\mathbf{n^{(k)}},d-1,j,l\right)<\left(n-\frac{n}{k}+1\right)^{d-2}\left(n-\frac{2n}{k}+2\right)
\end{equation}
if $j\neq l$ and
\begin{equation}\label{eqn54}
\left(n-\frac{n}{k}-1\right)^{d-1}<u\left(\mathbf{n^{(k)}},d-1,l,l\right)<\left(n-\frac{n}{k}+1\right)^{d-1}.
\end{equation}
Using \eqref{eqn51}, \eqref{eqn52}, \eqref{eqn53}, and \eqref{eqn54} we obtain \eqref{eqn55} and \eqref{eqn56}.
\section{Directed $k$-partite Graphs for diameter $d\geq 2$}
Using the above methods, we can obtain similar results about the probability of a random directed $k$-partite graph with the partite sets containing $n_1\leq n_2\leq\ldots\leq n_k$ vertices respectively having diameter $d$ where each directed edge is chosen independently with probability $p$. Furthermore, for any two vertices, say $v_1$ and $v_2$, the existence of the edge from $v_1$ to $v_2$ has probability $p$, while the existence of the edge from $v_2$ to $v_1$ also occurs with probability $p$, and these two edges occur independently. We proceed exactly as above the only changes being replacing the factor of $s^{\frac{i_{0,j}\left(i_0-i_{0,j}\right)}{2}}$ with $s^{i_{0,j}\left(n-n_j\right)}$ in \eqref{eqn32}, replacing $t'$ with $\left(i_{0,j}+i_{0,j}'\right)\left(n-n_j\right)$ in \eqref{eqn34}, and replacing the factor of $s^{\frac{i_{0,j}'''\left(i_0'''-i_{0,j}'''\right)}{2}}$ with $s^{i_{0,j}'''\left(n-n_j\right)}$ in \eqref{eqn36}, and replacing $\binom{n_j}{2}$ and $n_jn_l$ whenever they occur with $n_j\left(n_j-1\right)$ and $2n_jn_l$ respectively. The only other extra consideration is in our calculation for $n(b_1,b_2)$ where one pair of vertices has its vertices in the partite sets consisting of $n_{j_1}$ and $n_l$ vertices and the other pair of vertices has its vertices in the partite sets consisting of $n_{j_2}$ and $n_l$ vertices (here $j_1$ and $j_2$ may or may not be the same) where the paths concerned ends at one of the vertices in the $n_l$ set and begins at the other vertex in the $n_l$ set. To deal with this case, we would define $C_k'''\left(\mathbf{n^{(k)}},l,p,d,\mathbf{i^{(0)}},\mathbf{i^{0}}'\right)$, which we define the same way as $C_k'\left(\mathbf{n^{(k)}},l,p,d,\mathbf{i^{(0)}},\mathbf{i^{0}}'\right)$, except we consider directed paths from the $i_0$ vertices to vertex $v$, and directed paths from the vertex $v'$ to the $i_0'$ vertices and this case can be dealt with in exactly the same way as $C_k'\left(\mathbf{n^{(k)}},l,p,d,\mathbf{i^{(0)}},\mathbf{i^{0}}'\right)$. Consequently, in Theorem \ref{bigthmkpartite} and Corollary \ref{bigcor3}, we multiply the second and third terms of the lower bound by $2$, divide the last term in the upper bound in Theorem \ref{bigthmkpartite} by $2$, and divide the first term in the upper bound in Corollary \ref{bigcor3} by $2$ to get the analogous results for random directed graphs. Everything else is left unchanged.
\section{Bipartite Graphs for diameter $d\geq 3$}
Here we analyze the diameters of bipartite graphs. Let $G(n_1,n_2,p)$ denote the set of all simple bipartite graphs with partite sets of size $n_1$ vertices and $n_2$ vertices where each edge is chosen independently with probability $p$. Here we obtain upper and lower bounds on the probability of a random simple bipartite graph with partite sets of size $n_1$ vertices and $n_2$ vertices with independent edge selection having diameter at most $d$ for any specific $d\geq 2$, $d\in\mathbb{N}$. Again, we impose restrictions on $n_1,n_2$, and $p$. Then in the next section, we refine this result to make it more clear and meaningful by imposing further restrictions on $n_1,n_2$, and $p$. First, a note.
\begin{note}
Throughout this note let
\begin{align*}
&\quad g_b(n,n_j,p,d,d',i_0)\\
&:=\begin{cases}
                      n-n_j &d=2\\
                      n-n_j\\
                      \quad+(n-n_j)\sum_{l=1}^{\frac{d'}{2}}\prod_{m=1}^{l}\left(n-n_j-\sum_{q=1}^m(4np)^{2q-1}i_0\right)\\
                      \qquad\qquad\qquad\qquad\qquad\cdot\left(n_j-1-\sum_{q=0}^{m-1}(4np)^{2q}i_0\right) & d,d'\text{ both even, }d'<d-3\\
                      n-n_j\\
                      \quad+(n-n_j)\sum_{l=1}^{\frac{d'+1}{2}}\prod_{m=1}^{l}\left(n-n_j-\sum_{q=1}^m(4np)^{2q-1}i_0\right)\\
                      \qquad\qquad\qquad\qquad\qquad\cdot\left(n_j-1-\sum_{q=0}^{m-1}(4np)^{2q}i_0\right) & d\text{ even, }d'\text{ odd, }d'<d-3\\
                      1+\sum_{l=0}^{\frac{d'}{2}}\prod_{m=0}^{l}\left(n_j-1-\sum_{q=1}^m(4np)^{2q-1}i_0\right)\\
                      \qquad\qquad\qquad\cdot\left(n-n_j-\sum_{q=0}^{m}(4np)^{2q}i_0\right) & d\text{ odd },d'\text{ even, }d'<d-3\\
                      1+\sum_{l=0}^{\frac{d'-1}{2}}\prod_{m=0}^{l}\left(n_j-1-\sum_{q=1}^m(4np)^{2q-1}i_0\right)\\
                      \qquad\qquad\qquad\cdot\left(n-n_j-\sum_{q=0}^{m}(4np)^{2q}i_0\right) & d,d'\text{ both odd, }d'<d-3\\
                      n-n_j\\
                      \quad+(n-n_j)\sum_{l=1}^{\frac{d-2}{2}}\prod_{m=1}^{l}\left(n-n_j-\sum_{q=1}^m(4np)^{2q-1}i_0\right)\\
                      \qquad\qquad\qquad\qquad\qquad\cdot\left(n_j-1-\sum_{q=0}^{m-1}(4np)^{2q}i_0\right) & d\text{ even, }d-3\leq d'\\
                      1+\sum_{l=0}^{\frac{d-3}{2}}\prod_{m=0}^{l}\left(n_j-1-\sum_{q=1}^m(4np)^{2q-1}i_0\right)\\
                      \qquad\qquad\qquad\cdot\left(n-n_j-\sum_{q=0}^{m}(4np)^{2q}i_0\right) & d\text{ odd, }d-3\leq d'.
                       \end{cases}
\end{align*}
and
\begin{align*}
&\quad g_b'(n_1,n_2,p,d,d',i_0,i_0')\\
&:=\begin{cases}
n_2-i_0' & d=2\\\\
n_2-i_0'\\
\quad+\sum_{l=0}^{\frac{d'}{2}-1}\left(n_2-1-\sum_{q=0}^l(4np)^{2q+1}i_0-\sum_{q=0}^{l+1}(4np)^{2q}i_0'\right)\\
\qquad\qquad\cdot\prod_{m=0}^l\left(n_1-1-\sum_{q=0}^m(4np)^{2q}i_0-\sum_{q=0}^m(4np)^{2q+1}i_0'\right) & d\geq 3\\
\qquad\qquad\qquad\cdot\left(n_2-1-\sum_{q=0}^m(4np)^{2q}i_0'-\sum_{q=0}^{m-1}(4np)^{2q+1}i_0\right) & d'\leq d-3, d'\text{ even,}\\
\quad+\sum_{l=0}^{\frac{d'}{2}-1}\prod_{m=0}^l\left(n_1-1-\sum_{q=0}^m(4np)^{2q}i_0-\sum_{q=0}^m(4np)^{2q+1}i_0'\right) & \\
\qquad\qquad\qquad\cdot\left(n_2-1-\sum_{q=0}^m(4np)^{2q}i_0'-\sum_{q=0}^{m-1}(4np)^{2q+1}i_0\right) & \\\\
n_2-i_0'\\
\quad+\sum_{l=0}^{\frac{d'-1}{2}}\left(n_2-1-\sum_{q=0}^l(4np)^{2q+1}i_0-\sum_{q=0}^{l+1}(4np)^{2q}i_0'\right)\\
\qquad\qquad\cdot\prod_{m=0}^l\left(n_1-1-\sum_{q=0}^m(4np)^{2q}i_0-\sum_{q=0}^m(4np)^{2q+1}i_0'\right) & d\geq 4\\
\qquad\qquad\qquad\cdot\left(n_2-1-\sum_{q=0}^m(4np)^{2q}i_0'-\sum_{q=0}^{m-1}(4np)^{2q+1}i_0\right) & d'\leq d-3, d'\text{ odd,}\\
\quad+\sum_{l=0}^{\frac{d'-1}{2}}\prod_{m=0}^l\left(n_1-1-\sum_{q=0}^m(4np)^{2q}i_0-\sum_{q=0}^m(4np)^{2q+1}i_0'\right) & \\
\qquad\qquad\qquad\cdot\left(n_2-1-\sum_{q=0}^m(4np)^{2q}i_0'-\sum_{q=0}^{m-1}(4np)^{2q+1}i_0\right) & \\\\
n_2-i_0'\\
\quad+\sum_{l=0}^{\frac{d}{2}-2}\left(n_2-1-\sum_{q=0}^l(4np)^{2q+1}i_0-\sum_{q=0}^{l+1}(4np)^{2q}i_0'\right)\\
\qquad\qquad\cdot\prod_{m=0}^l\left(n_1-1-\sum_{q=0}^m(4np)^{2q}i_0-\sum_{q=0}^m(4np)^{2q+1}i_0'\right) & d\geq 4\\
\qquad\qquad\qquad\cdot\left(n_2-1-\sum_{q=0}^m(4np)^{2q}i_0'-\sum_{q=0}^{m-1}(4np)^{2q+1}i_0\right) & d'>d-3, d\text{ even,}\\
\quad+\sum_{l=0}^{\frac{d}{2}-2}\prod_{m=0}^l\left(n_1-1-\sum_{q=0}^m(4np)^{2q}i_0-\sum_{q=0}^m(4np)^{2q+1}i_0'\right) & \\
\qquad\qquad\qquad\cdot\left(n_2-1-\sum_{q=0}^m(4np)^{2q}i_0'-\sum_{q=0}^{m-1}(4np)^{2q+1}i_0\right) & \\\\
n_2-i_0'\\
\quad+\sum_{l=0}^{\frac{d-3}{2}}\left(n_2-1-\sum_{q=0}^l(4np)^{2q+1}i_0-\sum_{q=0}^{l+1}(4np)^{2q}i_0'\right)\\
\qquad\qquad\cdot\prod_{m=0}^l\left(n_1-1-\sum_{q=0}^m(4np)^{2q}i_0-\sum_{q=0}^m(4np)^{2q+1}i_0'\right) & d\geq 3\\
\qquad\qquad\qquad\cdot\left(n_2-1-\sum_{q=0}^m(4np)^{2q}i_0'-\sum_{q=0}^{m-1}(4np)^{2q+1}i_0\right) & d'>d-3, d\text{ odd,}\\
\quad+\sum_{l=0}^{\frac{d-3}{2}}\prod_{m=0}^l\left(n_1-1-\sum_{q=0}^m(4np)^{2q}i_0-\sum_{q=0}^m(4np)^{2q+1}i_0'\right) & \\
\qquad\qquad\qquad\cdot\left(n_2-1-\sum_{q=0}^m(4np)^{2q}i_0'-\sum_{q=0}^{m-1}(4np)^{2q+1}i_0\right). &
\end{cases}
\end{align*}
\end{note}
We will prove the following theorem.
\begin{theorem}\label{bigthmbipartite}
Fix $d\geq 2$, $d\in\mathbb{N}$. Let $G(n_1,n_2,p)$ denote the set of all simple bipartite graphs with partite vertex sets of size $n_1$ and $n_2$ vertices and where each edge is chosen independently with probability $p$. Also, let $P(G(n_1,n_2,p),d)$ be the probability of a graph from $G(n_1,n_2,p)$ having diameter at most $d$. Suppose that $d$ is odd. Let $d'\geq 0$. Suppose that
\begin{equation*}
2+8np+2(4np)^2+\ldots+2(4np)^{d'}\leq n_j-1
\end{equation*}
for $j=1,2$ where $d'\geq 0$. If $d$ is odd, we have
\begin{align*}
P(G(n_1,n_2,p),d)&>1-n_1n_2h(n,p,d,1)\left(1-f(n,p,d,1)\right)^{g_b(n,n_1,p,d,d',1)}
\end{align*}
and
\begin{align*}
P(G(n_1,n_2,p),d)&<(1-p^d)^{-2\left((n_1n_2)^{\frac{d-1}{2}}+\sum_{j=1}^{\frac{d-1}{2}}(n_1n_2)^{\frac{d-1}{2}-j}\left(p^{1-2j}+p^{-2j}\right)\right)}h(n-1,p,d,2)\\
&\quad\cdot\left(1-f(n,p,d,2)\right)^{2\cdot g_b(n-1,n_j-1,p,d,d',2)}\\
&\quad-1+\frac{1}{n_1n_2(1-p^d)^{\left((n_1n_2)^{\frac{d-1}{2}}+\sum_{l=1}^{\frac{d-1}{2}}(n_1n_2)^{\frac{d-1}{2}-l}\left(p^{1-2l}+p^{-2l}\right)\right)}}.
\end{align*}
If $d$ is even, we have
\begin{align*}
P(G(n_1,n_2,p),d)&>1-\sum_{j=1}^{2}\binom{n_j}{2}h(n,p,d,1)\left(1-f(n,p,d,1)\right)^{g_b(n,n_j,p,d,d',1)}
\end{align*}
and
\begin{align*}
P(G(n_1,n_2,p),d)&<\left(\sum_{j=1}^2\binom{n_j}{2}(1-p^d)^{(n-n_j)\left((n_1n_2)^{\frac{d-2}{2}}+\sum_{l=1}^{\frac{d-2}{2}}(n_1n_2)^{\frac{d-2}{2}-l}\left(p^{1-2l}+p^{-2l}\right)\right)}\right)^{-2}\\
&\quad\cdot\left(\left(\sum_{j=1}^2\binom{n_j}{2}^2h(n-1,p,d,2)\left(1-f(n,p,d,2)\right)^{2\cdot g_b(n-1,n_j-1,p,d,d',2)}\right)\right.\\
&\qquad\quad\left.+2\binom{n_1}{2}\binom{n_2}{2}h(n,p,d,1)^2\left(1-f(n,p,d,1)\right)^{g_b'(n_2,n_1,p,d,d',1,1)+g_b'(n_1,n_2,p,d,d',1,1)}\right)\\
&\quad-1\\
&\quad+\frac{1}{\sum_{j=1}^2\binom{n_j}{2}(1-p^d)^{(n-n_j)\left((n_1n_2)^{\frac{d-2}{2}}+\sum_{l=1}^{\frac{d-2}{2}}(n_1n_2)^{\frac{d-2}{2}-l}\left(p^{1-2j}+p^{-2j}\right)\right)}}.
\end{align*}
\end{theorem}
We will now prove Theorem \ref{bigthmbipartite}.
\newline
\newline
For each $n\in\mathbb{N}$, let $G(n_1,n_2,p)$ denote the set of all bipartite graphs with partite sets of size $n_1$ vertices and $n_2$ vertices with edge probability $p$, and let $P(G(n_1,n_2,p))$ be the probability of a graph from $G(n_1,n_2,p)$ having diameter at most $d$. Let $p=\frac{r}{s}$ where $r=r(n), s=s(n)\in\mathbb{N}$. We let $A$ be the set of all graphs in $G(n_1,n_2,p)$, allowing for a number of duplicates of each possible graph to accommodate the edge probability $p$, so that
\begin{equation*}
|A|=\sum_{k=0}^{n_1n_2}\binom{n_1n_2}{k}r^k(s-r)^{n_1n_2-k}=s^{n_1n_2}.
\end{equation*}
If $d$ is odd, we let $B$ be all pairs of vertices that occur in the same partite set so $|B|=\binom{n_1}{2}+\binom{n_2}{2}$. If $d$ is even, we let $B$ be all pairs of vertices where the vertices in the pair occur in different partite sets so that $|B|=n_1n_2$. For a graph $a\in A$ and a pair of vertices $b\in B$, we say $a\sim b$ if there is no path between the pair of vertices $b$ that consists of at most $d-1$ edges. Thus, we will have $\omega(a)=0$ if and only if $a$ is connected with diameter at most $d$.
\newline
\newline
Pick a pair of vertices $b\in B$ and call them $v_1$ and $v_2$. To calculate $\deg b$, we need to calculate the number of graphs in $A$ such that there is no path from $v_1$ to $v_2$ that consists of at most $d-1$ edges. To help with this calculation, we will calculate a generalised notion of $\deg b$ as follows. First suppose that $d$ is odd. Let $0\leq i_0\leq\max\{n_1,n_2\}-1$. Pick a specific set of $i_0$ vertices out of the labeled vertices in one of the partite sets, as well as another vertex, say $v$, in the same partite set. We will let $C_b(n,n_j,r,s,d-1,i_0)$ denote the number of graphs in $A$ such that there is no path from any of the $i_0$ vertices to vertex $v$ that consists of at most $d-1$ edges where the $i_0$ vertices come from the partite set that consists of $v_j$ vertices. Now suppose that $d$ is even. Let $0\leq i_0\leq\max\{n_1,n_2\}-1$. Pick a specific set of $i_0$ vertices out of the labeled vertices in one of the partite sets, as well as another vertex, say $v$, in the opposite partite set. We will let $C_b(n,n_j,r,s,d-1,i_0)$ denote the number of graphs in $A$ such that there is no path from any of the $i_0$ vertices to vertex $v$ that consists of at most $d-1$ edges where vertex $v$ comes from the partite set that consists of $v_j$ vertices. If $d$ is even, we can derive the recursive formula
\begin{align}
C_b(n,n_j,r,s,d+1,i_0)&=(s-r)^{i_0n_j}s^{n_1n_2-i_0n_j}\nonumber\\
&\quad+\sum_{i_1=1}^{n_j-1}\binom{n_j-1}{i_1}\left(s^{i_0}-(s-r)^{i_0}\right)^{i_1}(s-r)^{i_0(n_j-i_1)}C_b(n-i_0,n_j,r,s,d,i_1)\label{eqn10c}
\end{align} 
valid for all $0\leq i_0\leq n-n_j$ and even $d\geq 2$, which can be simplified to
\begin{equation}\label{eqn10a}
C_b(n,n_j,r,s,d+1,i_0)=\sum_{i_1=0}^{n_j-1}\binom{n_j-1}{i_1}\left(s^{i_0}-(s-r)^{i_0}\right)^{i_1}(s-r)^{i_0(n_j-i_1)}C_b(n-i_0,n_j,r,s,d,i_1)
\end{equation}
if we assume that $i_0>0$. On the other hand, if $d$ is odd, we can derive the recursive formula
\begin{align}
C_b(n,n_j,r,s,d+1,i_0)&=(s-r)^{i_0(n-n_j)}s^{n_1n_2-i_0(n-n_j)}\nonumber\\
&\quad+\sum_{i_1=1}^{n-n_j}\binom{n-n_j}{i_1}\left(s^{i_0}-(s-r)^{i_0}\right)^{i_1}(s-r)^{i_0(n-n_j-i_1)}C_b(n-i_0,n_j-i_0,r,s,d,i_1)\label{eqn10d}
\end{align} 
valid for all $0\leq i_0\leq n_j-1$ and $d\geq 1$, which can be simplified to
\begin{equation}\label{eqn10b}
C_b(n,n_j,r,s,d+1,i_0)=\sum_{i_1=0}^{n-n_j}\binom{n-n_j}{i_1}\left(s^{i_0}-(s-r)^{i_0}\right)^{i_1}(s-r)^{i_0(n-n_j-i_1)}C_b(n-i_0,n_j-i_0,r,s,d,i_1).
\end{equation}
As well,
\begin{equation*}
C_b(n,n_j,r,s,1,i_0)=(s-r)^{i_0}s^{n_1n_2-i_0}
\end{equation*}
for all $0\leq i_0\leq n-n_j$, completing the formula. Then we can deduce that $C_b(n,n_j,r,s,d-1,1)=\deg b$ if we are working with diameter $d$. Let $D_b(n,n_j,p,d-1,i_0)=\frac{C_b(n,n_j,r,s,d-1,i_0)}{s^{n_1n_2}}$ so that $D_b(n,n_j,p,d-1,i_0)$ is the probability that the edge distance between $v$ and any of the $i_0$ vertices is greater than $d-1$. We will prove that if $0\leq i_0\leq n-n_j$, $0<p<1$, $d\geq 1$ is odd, then
\begin{equation}\label{lowerdegbbipartiteodd}
D_b(n,n_j,p,d,i_0)\geq(1-p^d)^{i_0\left((n_1n_2)^{\frac{d-1}{2}}+(n_1n_2)^{\frac{d-3}{2}}(p^{-1}+p^{-2})+(n_1n_2)^{\frac{d-5}{2}}(p^{-3}+p^{-4})+\ldots+(p^{-(d-2)}+p^{-(d-1)})\right)}
\end{equation}
and that under the additional constraints $i_0\leq\frac{n_j-1}{4np+(4np)^3+\ldots+(4np)^{d'}}$ and $i_0\leq\frac{n-n_j}{1+(4np)^2+\ldots+(4np)^{d'-1}}$ if $d'$ is odd, $i_0\leq\frac{n_j-1}{4np+(4np)^3+\ldots+(4np)^{d'-1}}$, $i_0\leq\frac{n-n_j}{1+(4np)^2+\ldots+(4np)^{d'}}$ if $d'$ is even and is at least $2$, or $i_0\leq n-n_j$ if  $d'=0$, then we also have
\begin{equation}
D_b(n,n_j,p,d,i_0)<h(n,p,d,i_0)\left(1-f(n,p,d,i_0)\right)^{i_0g_b(n,n_j,p,d,d',i_0)}.\label{upperdegbbipartite}
\end{equation}
Also, we will prove that if $0\leq i_0\leq n_j-1$, $0<p<1$, $d\geq 2$ is even, then
\begin{equation}\label{lowerdegbbipartiteeven}
D_b(n,n_j,p,d,i_0)\geq(1-p^d)^{i_0(n-n_j)\left((n_1n_2)^{\frac{d-2}{2}}+(n_1n_2)^{\frac{d-4}{2}}(p^{-1}+p^{-2})+(n_1n_2)^{\frac{d-6}{2}}(p^{-3}+p^{-4})+\ldots+(p^{-(d-3)}+p^{-(d-2)})\right)}
\end{equation}
and that under the additional constraints $i_0\leq\frac{n-n_j}{4np+(4np)^3+\ldots+(4np)^{d'}}$ and $i_0\leq\frac{n_j-1}{1+(4np)^2+\ldots+(4np)^{d'-1}}$ if $d'$ is odd, $i_0\leq\frac{n-n_j}{4np+(4np)^3+\ldots+(4np)^{d'-1}}$, $i_0\leq\frac{n_j-1}{1+(4np)^2+\ldots+(4np)^{d'}}$ if $d'$ is even and is at least $2$, or $i_0\leq n_j-1$ if  $d'=0$, then we also have \eqref{upperdegbbipartite}. For $d=1$, we can see that \eqref{lowerdegbbipartiteodd} holds. Suppose $d$ is odd and \eqref{lowerdegbbipartiteodd} holds for all $0\leq i_0\leq n-n_j$ and $0<p<1$ . We will prove that \eqref{lowerdegbbipartiteeven} holds for $d+1$. First, we can verify that \eqref{lowerdegbbipartiteeven} holds if $i_0=0$. For what follows let
\begin{equation*}
C_b(n_1,n_2,p,d):=(n_1n_2)^{\frac{d-1}{2}}+(n_1n_2)^{\frac{d-3}{2}}(p^{-1}+p^{-2})+(n_1n_2)^{\frac{d-5}{2}}(p^{-3}+p^{-4})+\ldots+(p^{-(d-2)}+p^{-(d-1)}).
\end{equation*}
By \eqref{eqn10b}, we have
\begin{align*}
&\quad D_b(n,n_j,p,d+1,i_0)\\
&=(1-p)^{i_0(n-n_j)}\sum_{i_1=0}^{n-n_j}\binom{n-n_j}{i_1}((1-p)^{-i_0}-1)^{i_1}D_b(n-i_0,n_j-i_0,p,d,i_1)\\
&>(1-p)^{i_0(n-n_j)}\\
&\quad\cdot\sum_{i_1=0}^{n-n_j}\binom{n-n_j}{i_1}((1-p)^{-i_0}-1)^{i_1}(1-p^d)^{i_1C_b(n_1,n_2,p,d)}\\
&=(1-p)^{i_0(n-n_j)}\left(1+\left((1-p)^{-i_0}-1\right)(1-p^d)^{C_b(n_1,n_2,p,d)}\right)^{n-n_j}\\
&=\left((1-p)^{i_0}+\left(1-(1-p)^{i_0}\right)(1-p^d)^{C_b(n_1,n_2,p,d)}\right)^{n-n_j}.
\end{align*}
Using Lemma \ref{lem1} we thus have
\begin{align*}
D_b(n,n_j,p,d+1,i_0)&>(1-p^{d+1})^{i_0(n-n_j)C_b(n_1,n_2,p,d)}.
\end{align*}
Suppose $d$ is even and \eqref{lowerdegbbipartiteeven} holds for all $0\leq i_0\leq n_j-1$ and $0<p<1$ . We will prove that \eqref{lowerdegbbipartiteodd} holds for $d+1$. First, we can verify that \eqref{lowerdegbbipartiteodd} holds if $i_0=0$. By \eqref{eqn10a}, we have
\begin{align*}
&\quad D_b(n,n_j,p,d+1,i_0)\\
&=(1-p)^{i_0n_j}\sum_{i_1=0}^{n_j-1}\binom{n_j-1}{i_1}((1-p)^{-i_0}-1)^{i_1}D_b(n-i_0,n_j,p,d,i_1)\\
&>(1-p)^{i_0n_j}\sum_{i_1=0}^{n_j-1}\binom{n_j-1}{i_1}((1-p)^{-i_0}-1)^{i_1}(1-p^d)^{i_1(n-n_j)C_b(n_1,n_2,p,d-1)}\\
&=(1-p)^{i_0n_j}\left(1+\left((1-p)^{-i_0}-1\right)(1-p^d)^{(n-n_j)C_b(n_1,n_2,p,d-1)}\right)^{n_j-1}\\
&=(1-p)^{i_0}\left((1-p)^{i_0}+\left(1-(1-p)^{i_0}\right)(1-p^d)^{(n-n_j)C_b(n_1,n_2,p,d-1)}\right)^{n_j-1}.
\end{align*}
Using Lemma \ref{lem1} we thus have
\begin{align*}
D_b(n,n_j,p,d+1,i_0)&>(1-p)^{i_0}(1-p^{d+1})^{i_0(n-n_j)C_b(n_1n_2,p,d-1)(n_j-1)}\\
&=\frac{(1-p^{d+1})^{i_0}}{(1+p+p^2+\ldots+p^d)^{i_0}}\cdot(1-p^{d+1})^{i_0(n-n_j)C_b(n_1n_2,p,d-1)(n_j-1)}\\
&>(1-p^{d+1})^{i_0}e^{(-p-p^2-\ldots-p^d)i_0}(1-p^{d+1})^{i_0(n-n_j)C_b(n_1n_2,p,d-1)(n_j-1)}\\
&>(1-p^{d+1})^{(1+p^{d-1}+p^{-2}+^{-3}+\ldots+p^{-d})i_0}(1-p^{d+1})^{i_0(n-n_j)C_b(n_1n_2,p,d-1)(n_j-1)}\\
&>(1-p^{d+1})^{i_0C_b(n_1,n_2,p,d+1)}.
\end{align*}
Thus \eqref{lowerdegbbipartiteodd} and \eqref{lowerdegbbipartiteeven} are proved. Next we prove \eqref{upperdegbbipartite} again by induction on $d$. For $d=2$, applying Lemma \ref{lem1} we have
\begin{align*}
D_b(n,n_j,p,2,i_0)&=(1-p)^{i_0(n-n_j)}\sum_{i_1=0}^{n-n_j}\binom{n-n_j}{i_1}((1-p)^{-i_0}-1)^{i_1}(1-p)^{i_1}\\
&=(1-p)^{i_0(n-n_j)}(p+(1-p)^{1-i_0})^{n-n_j}\\
&=(1-p+p(1-p)^{i_0})^{n-n_j}\\
&<\left(1-\frac{p(1-(1-p)^{i_0})}{i_0}\right)^{i_0(n-n_j)}.
\end{align*}
Suppose for some odd $d\geq 3$ \eqref{upperdegbbipartite} holds for all $i_0$ in the stated ranges, and $0<p<1$. We will prove \eqref{upperdegbbipartite} holds for $d+1$. We have
\begin{equation*}
D_b(n,n_j,p,d+1,i_0)=(1-p)^{i_0(n-n_j)}\sum_{i_1=0}^{n-n_j}\binom{n-n_j}{i_1}((1-p)^{-i_0}-1)^{i_1}D_b(n-i_0,n_j-i_0,p,d,i_1).
\end{equation*}
We divide into three cases.
\setcounter{case}{0}
\begin{case}{$\frac{n-n_j}{4np}<i_0\leq n_j-1$.}
\normalfont
\newline
\newline
We have the following:
\begin{align*}
&\quad D_b(n,n_j,p,d+1,i_0)\\
&=(1-p)^{i_0(n-n_j)}\sum_{i_1=0}^{n-n_j}\binom{n-n_j}{i_1}((1-p)^{-i_0}-1)^{i_1}D_b(n-i_0,n_j-i_0,p,d,i_1)\\
&=(1-p)^{i_0(n-n_j)}\left(1+\sum_{i_1=1}^{n-n_j}\binom{n-n_j}{i_1}((1-p)^{-i_0}-1)^{i_1}\right.\\
&\qquad\qquad\qquad\qquad\qquad\qquad\qquad\left.\left(1-f(n-i_0,p,d,i_1)\right)^{i_1g_b(n-i_0,n_j-i_0,p,d,0,i_1)}h(n-i_0,p,d,i_1)\right).
\end{align*}
We can deduce that $h(n-i_0,p,d,i_1)\leq h(n-i_0,p,d,4npi_0)\leq h(n,p,d+1,i_0)$ and from Lemma \ref{lem3}, we can deduce that $f(n,p,d,4npi_0)<f(n-i_0,p,d,i_1)$. As well, $g_b(n-i_0,n_j-i_0,p,d,n-n_j-1)\leq g_b(n-i_0,n_j-i_0,p,d,i_1)$. Thus we have
\begin{align*}
&\quad D_b(n,n_j,p,d+1,i_0)\\
&<h(n,p,d+1,i_0)(1-p)^{i_0(n-n_j)}\left(1+\sum_{i_1=1}^{n-n_j}\binom{n-n_j}{i_1}((1-p)^{-i_0}-1)^{i_1}\right.\\
&\qquad\qquad\qquad\qquad\qquad\qquad\qquad\qquad\qquad\qquad\qquad\left.\left(1-f(n,p,d,4npi_0)\right)^{i_1g_b(n-i_0,n_j-i_0,p,d,0,n-n_j)}\right)\\
&=h(n,p,d+1,i_0)(1-p)^{i_0(n-n_j)}\\
&\quad\cdot\left(1+((1-p)^{-i_0}-1)\left(1-f(n,p,d,4npi_0)\right)^{g_b(n-i_0,n_j-i_0,p,d,0,n-n_j)}\right)^{n-n_j}\\
&<h(n,p,d+1,i_0)\left((1-p)^{i_0}+(1-(1-p)^{i_0})\left(1-f(n,p,d,4npi_0)\right)^{g_b(n-i_0,n_j-i_0,p,d,0,n-n_j)}\right)^{n-n_j}.
\end{align*}
We note that $g_b(n-i_0,n_j,i_0,p,d,0,n-n_j)<n^{d-1}$ and so, using Lemma \ref{lem1}, we thus have
\begin{align*}
&\quad D_b(n,n_j,p,d+1,i_0)\\
&<h(n,p,d+1,i_0)\\
&\quad\cdot\left(1-p\left(\frac{1-(1-p)^{i_0}}{pi_0}\right)\left(\frac{1-(1-p^d)^{n^{d-1}}}{n^{d-1}p^d}\right)f(n,p,d,4npi_0)\right)^{i_0(n-n_j)g_b(n-i_0,n_j-i_0,p,d,0,n-n_j)}\\
&=h(n,p,d+1,i_0)\left(1-f(n,p,d+1,i_0)\right)^{i_0(n-n_j)g_b(n-i_0,n_j-i_0,p,d,0,n-n_j)}.
\end{align*}
We can deduce that $(n-n_j)g_b(n-i_0,n_j-i_0,p,d,0,n-n_j)>g_b(n,n_j,p,d+1,0,i_0)$ and so we have \eqref{upperdegb}. 
\end{case}
\begin{case}{$i_0\leq\frac{n-n_j}{4np}$}
\normalfont
\newline
\newline
Given a subset of $i_1$ vertices from a set of $n_1$ vertices and vertex, say $v$, from a set of $n_2$ vertices in a graph from $G(n_1,n_2,p)$, we know that $D_b(n,n_2,p,d,i_1)$ is the probability that the edge distance between $v$ and any of the $i_1$ vertices is greater than $d$ where $n=n_1+n_2$. By adding one more vertex to our set of $i_1$ vertices, it therefore follows that $D_b(n,n_2,p,d,i_1+1)\leq D_b(n,n_2,p,d,i_1)$. Thus, by Lemma \ref{4nplemma}, we have
\begin{align*}
&\quad D_b(n,n_j,p,d+1,i_0)\\
&<\left(1-\frac{4}{5}\left(\frac{e}{3}\right)^{4npi_0}\right)^{-1}(1-p)^{i_0(n-n_j)}\sum_{i_1=0}^{4npi_0}\binom{n-n_j}{i_1}((1-p)^{-i_0}-1)^{i_1}D_b(n-i_0,n_j-i_0,p,d,i_1)\\
&<\left(1-\frac{4}{5}\left(\frac{e}{3}\right)^{4npi_0}\right)^{-1}(1-p)^{i_0(n-n_j)}\\
&\quad\cdot\left(1+\sum_{i_1=1}^{4npi_0}\binom{n-n_j}{i_1}((1-p)^{-i_0}-1)^{i_1}\right.\\
&\qquad\qquad\qquad\qquad\qquad\cdot\left.\left(1-f(n-i_0,p,d,i_1)\right)^{i_1g_b(n-i_0,n_j-i_0,p,d,0,i_1)}h(n-i_0,p,d,i_1)\right).
\end{align*}
We can deduce that $h(n-i_0,p,d,i_1)\leq h(n-i_0,p,d,4npi_0)$ and from Lemma \ref{lem3}, we can deduce that $f(n,p,d,4npi_0)<f(n-i_0,p,d,i_1)$. As well, $g_b(n-i_0,n_j-i_0,p,d,0,n-n_j)\leq g_b(n-i_0,n_j-i_0,p,d,0,i_1)$. Thus we have
\begin{align*}
&\quad D_b(n,n_j,p,d+1,i_0)\\
&<\left(1-\frac{4}{5}\left(\frac{e}{3}\right)^{4npi_0}\right)^{-1}(1-p)^{i_0(n-n_j)}\\
&\quad\cdot\left(1+\sum_{i_1=1}^{4npi_0}\binom{n-n_j}{i_1}((1-p)^{-i_0}-1)^{i_1}\right.\\
&\left.\qquad\qquad\qquad\qquad\qquad\cdot\left(1-f(n,p,d,4npi_0)\right)^{i_1g_b(n-i_0,n_j-i_0,p,d,0,n-n_j)}h(n,p,d,4npi_0)\right)\\
&<\left(1-\frac{4}{5}\left(\frac{e}{3}\right)^{4npi_0}\right)^{-1}h(n-i_0,p,d,4npi_0)(1-p)^{i_0(n-n_j)}\\
&\quad\cdot\left(1+\sum_{i_1=1}^{n-n_j}\binom{n-n_j}{i_1}((1-p)^{-i_0}-1)^{i_1}\left(1-f(n,p,d,4npi_0)\right)^{i_1g_b(n-i_0,n_j-i_0,p,d,0,n-n_j)}\right)\\
&<h(n,p,d+1,i_0)(1-p)^{i_0(n-n_j)}\\
&\quad\cdot\left(1+\left((1-p)^{-i_0}-1\right)\left(1-f(n,p,d,4npi_0)\right)^{g_b(n-i_0,n_j-i_0,p,d,0,n-n_j)}\right)^{n-n_j}\\
&<h(n,p,d+1,i_0)\left((1-p)^{i_0}+\left(1-(1-p)^{i_0}\right)\left(1-f(n,p,d,4npi_0)\right)^{g_b(n-i_0,n_j-i_0,p,d,0,n-n_j)}\right)^{n-n_j}.
\end{align*}
We note that $g_b(n-i_0,n_j-i_0,p,d,0,n-n_j)<n^{d-1}$ and so, using Lemma \ref{lem1}, we thus have
\begin{align*}
&\quad D_b(n,n_j,p,d+1,i_0)\\
&<h(n,p,d+1,i_0)\\
&\quad\cdot\left(1-p\left(\frac{1-(1-p)^{i_0}}{pi_0}\right)\left(\frac{1-(1-p^d)^{n^{d-1}}}{n^{d-1}p^d}\right)f(n,p,d,4npi_0)\right)^{i_0(n-n_j)g_b(n-i_0,n_j-i_0,p,d,0,n-n_j)}\\
&=h(n,p,d+1,i_0)\left(1-f(n,p,d+1,i_0)\right)^{i_0(n-n_j)g_b(n-i_0,n_j-i_0,p,d,0,n-n_j)}.
\end{align*}
We can deduce that $(n-n_j)g_b(n-i_0,n_j-i_0,p,d,0,n-n_j)>g_b(n,n_j,p,d+1,0,i_0)$ and so we have \eqref{upperdegb}.
\end{case}
\begin{case}{$i_0\leq\frac{n-n_j}{4np+(4np)^3+\ldots+(4np)^{d'}}$, $i_0\leq\frac{n_j-1}{1+(4np)^2+\ldots+(4np)^{d'-1}}$ OR $i_0\leq\frac{n-n_j}{4np+(4np)^3+\ldots+(4np)^{d'-1}}$, $i_0\leq\frac{n_j-1}{1+(4np)^2+\ldots+(4np)^{d'}}$}
\normalfont
\newline
\newline
Given a subset of $i_1$ vertices from a set of $n_1$ vertices and vertex, say $v$, from a set of $n_2$ vertices in a graph from $G(n_1,n_2,p)$, we know that $D_b(n,n_2,p,d,i_1)$ is the probability that the edge distance between $v$ and any of the $i_1$ vertices is greater than $d$ where $n=n_1+n_2$. By adding one more vertex to our set of $i_1$ vertices, it therefore follows that $D_b(n,n_2,p,d,i_1+1)\leq D_b(n,n_2,p,d,i_1)$. Thus, by Lemma \ref{4nplemma}, we have
\begin{align*}
&\quad D_b(n,n_j,p,d+1,i_0)\\
&<\left(1-\frac{4}{5}\left(\frac{e}{3}\right)^{4npi_0}\right)^{-1}(1-p)^{i_0(n-n_j)}\sum_{i_1=0}^{4npi_0}\binom{n-n_j}{i_1}((1-p)^{-i_0}-1)^{i_1}D_b(n-i_0,n_j-i_0,p,d,i_1)\\
&<\left(1-\frac{4}{5}\left(\frac{e}{3}\right)^{4npi_0}\right)^{-1}(1-p)^{i_0(n-n_j)}\\
&\quad\cdot\left(1+\sum_{i_1=1}^{4npi_0}\binom{n-n_j}{i_1}((1-p)^{-i_0}-1)^{i_1}\right.\\
&\qquad\qquad\qquad\qquad\qquad\cdot\left.\left(1-f(n-i_0,p,d,i_1)\right)^{i_1g_b(n-i_0,n_j-i_0,p,d,d',i_1)}h(n-i_0,p,d,i_1)\right).
\end{align*}
We can deduce that $h(n-i_0,p,d,i_1)\leq h(n-i_0,p,d,4npi_0)$ and from Lemma \ref{lem3}, we can deduce that $f(n,p,d,4npi_0)<f(n-i_0,p,d,i_1)$. As well, $g_b(n-i_0,n_j-i_0,p,d,d',4npi_0)\leq g_b(n-i_0,n_j-i_0,p,d',d',i_1)$. Thus we have
\begin{align*}
&\quad D_b(n,n_j,p,d+1,i_0)\\
&<\left(1-\frac{4}{5}\left(\frac{e}{3}\right)^{4npi_0}\right)^{-1}(1-p)^{i_0(n-n_j)}\\
&\quad\cdot\left(1+\sum_{i_1=1}^{4npi_0}\binom{n-n_j}{i_1}((1-p)^{-i_0}-1)^{i_1}\right.\\
&\left.\qquad\qquad\qquad\qquad\qquad\cdot\left(1-f(n,p,d,4npi_0)\right)^{i_1g_b(n-i_0,n_j-i_0,p,d,d',4npi_0)}h(n,p,d,4npi_0)\right)\\
&<\left(1-\frac{4}{5}\left(\frac{e}{3}\right)^{4npi_0}\right)^{-1}h(n-i_0,p,d,4npi_0)(1-p)^{i_0(n-n_j)}\\
&\quad\cdot\left(1+\sum_{i_1=1}^{n-n_j}\binom{n-n_j}{i_1}((1-p)^{-i_0}-1)^{i_1}\left(1-f(n,p,d,4npi_0)\right)^{i_1g_b(n-i_0,n_j-i_0,p,d,d',4npi_0)}\right)\\
&<h(n,p,d+1,i_0)(1-p)^{i_0(n-n_j)}\\
&\quad\cdot\left(1+\left((1-p)^{-i_0}-1\right)\left(1-f(n,p,d,4npi_0)\right)^{g_b(n-i_0,n_j-i_0,p,d,d',4npi_0)}\right)^{n-n_j}\\
&<h(n,p,d+1,i_0)\left((1-p)^{i_0}+\left(1-(1-p)^{i_0}\right)\left(1-f(n,p,d,4npi_0)\right)^{g_b(n-i_0,n_j-i_0,p,d,d',4npi_0)}\right)^{n-n_j}.
\end{align*}
We note that $g_b(n-i_0,n_j-i_0,p,d,0,n-n_j)<n^{d-1}$ and so, using Lemma \ref{lem1}, we thus have
\begin{align*}
&\quad D_b(n,n_j,p,d+1,i_0)\\
&<h(n,p,d+1,i_0)\\
&\quad\cdot\left(1-p\left(\frac{1-(1-p)^{i_0}}{pi_0}\right)\left(\frac{1-(1-p^d)^{n^{d-1}}}{n^{d-1}p^d}\right)f(n,p,d,4npi_0)\right)^{i_0(n-n_j)g_b(n-i_0,n_j-i_0,p,d,d',4npi_0)}\\
&=h(n,p,d+1,i_0)\left(1-f(n,p,d+1,i_0)\right)^{i_0(n-n_j)g_b(n-i_0,n_j-i_0,p,d,d',4npi_0)}.
\end{align*}
We can deduce that $(n-n_j)g_b(n-i_0,n_j-i_0,p,d,d',4npi_0)>g_b(n,n_j,p,d+1,d'+1,i_0)$ and so we have \eqref{upperdegb}.
\end{case}
Suppose for some even $d\geq 2$ \eqref{upperdegbbipartite} holds for all $i_0$ in the stated ranges, and $0<p<1$. We will prove \eqref{upperdegbbipartite} holds for $d+1$. We have
\begin{equation*}
D_b(n,n_j,p,d+1,i_0)=(1-p)^{i_0n_j}\sum_{i_1=0}^{n_j-1}\binom{n_j-1}{i_1}((1-p)^{-i_0}-1)^{i_1}D_b(n-i_0,n_j,p,d,i_1).
\end{equation*}
We divide into three cases.
\setcounter{case}{0}
\begin{case}{$\frac{n_j-1}{4np}<i_0\leq n-n_j$.}
\normalfont
\newline
\newline
We have the following:
\begin{align*}
&\quad D_b(n,n_j,p,d+1,i_0)\\
&=(1-p)^{i_0n_j}\sum_{i_1=0}^{n_j-1}\binom{n_j-1}{i_1}((1-p)^{-i_0}-1)^{i_1}D_b(n-i_0,n_j,p,d,i_1)\\
&=(1-p)^{i_0n_j}\left(1+\sum_{i_1=1}^{n_j-1}\binom{n_j-1}{i_1}((1-p)^{-i_0}-1)^{i_1}\right.\\
&\qquad\qquad\qquad\qquad\qquad\qquad\qquad\left.\left(1-f(n-i_0,p,d,i_1)\right)^{i_1g_b(n-i_0,n_j,p,d,0,i_1)}h(n-i_0,p,d,i_1)\right).
\end{align*}
We can deduce that $h(n-i_0,p,d,i_1)\leq h(n-i_0,p,d,4npi_0)<h(n-i_0,p,d+1,i_0)$ and from Lemma \ref{lem3}, we can deduce that $f(n,p,d,4npi_0)<f(n-i_0,p,d,i_1)$. As well, $g_b(n-i_0,n_j,p,d,n_j-1)\leq g_b(n-i_0,n_j,p,d,i_1)$. Thus we have
\begin{align*}
&\quad D_b(n,n_j,p,d+1,i_0)\\
&<h(n,p,d+1,i_0)(1-p)^{i_0n_j}\left(1+\sum_{i_1=1}^{n_j-1}\binom{n-n_j}{i_1}((1-p)^{-i_0}-1)^{i_1}\right.\\
&\qquad\qquad\qquad\qquad\qquad\qquad\qquad\qquad\qquad\qquad\qquad\left.\left(1-f(n,p,d,4npi_0)\right)^{i_1g_b(n-i_0,n_j,p,d,0,n_j-1)}\right)\\
&=h(n,p,d+1,i_0)(1-p)^{i_0n_j}\\
&\quad\cdot\left(1+((1-p)^{-i_0}-1)\left(1-f(n,p,d,4npi_0)\right)^{g_b(n-i_0,n_j,p,d,0,n_j-1)}\right)^{n_j-1}\\
&<h(n,p,d+1,i_0)(1-p)^{i_0}\\
&\quad\cdot\left((1-p)^{i_0}+(1-(1-p)^{i_0})\left(1-f(n,p,d,4npi_0)\right)^{g_b(n-i_0,n_j,p,d,0,n_j-1)}\right)^{n_j-1}.
\end{align*}
We note that $g_b(n-i_0,n_j,i_0,p,d,0,n_j-1)<n^{d-1}$ and so, using Lemma \ref{lem1}, we thus have
\begin{align*}
&\quad D_b(n,n_j,p,d+1,i_0)\\
&<h(n,p,d+1,i_0)\\
&\quad\cdot\left(1-p\left(\frac{1-(1-p)^{i_0}}{pi_0}\right)\left(\frac{1-(1-p^d)^{n^{d-1}}}{n^{d-1}p^d}\right)f(n,p,d,4npi_0)\right)^{i_0(n_j-1)g_b(n-i_0,n_j,p,d,0,n_j-1)+i_0}\\
&=h(n,p,d+1,i_0)\left(1-f(n,p,d+1,i_0)\right)^{i_0(n_j-1)g_b(n-i_0,n_j,p,d,0,n_j-1)+i_0}.
\end{align*}
We can deduce that $(n_j-1)g_b(n-i_0,n_j,p,d,0,n_j-1)+1>g_b(n,n_j,p,d+1,0,i_0)$ and so we have \eqref{upperdegb}. 
\end{case}
\begin{case}{$i_0\leq\frac{n_j-1}{4np}$}
\normalfont
\newline
\newline
Given a set of $i_1$ vertices and one additional vertex, say $v$, in a graph from $G(n-i_0,p)$, we know that $D_b(n-i_0,p,d,i_1)$ is the probability that the edge distance between $v$ and any of the $i_1$ vertices is greater than $d$. By adding one more vertex to our set of $i_1$ vertices, it therefore follows that $D_b(n-i_0,p,d,i_1+1)\leq D_b(n-i_0,p,d,i_1)$. Thus, by Lemma \ref{4nplemma}, we have
\begin{align*}
&\quad D_b(n,n_j,p,d+1,i_0)\\
&<\left(1-\frac{4}{5}\left(\frac{e}{3}\right)^{4npi_0}\right)^{-1}(1-p)^{i_0n_j}\sum_{i_1=0}^{4npi_0}\binom{n_j-1}{i_1}((1-p)^{-i_0}-1)^{i_1}D_b(n-i_0,n_j-i_0,p,d,i_1)\\
&<\left(1-\frac{4}{5}\left(\frac{e}{3}\right)^{4npi_0}\right)^{-1}(1-p)^{i_0n_j}\\
&\quad\cdot\left(1+\sum_{i_1=1}^{4npi_0}\binom{n_j-1}{i_1}((1-p)^{-i_0}-1)^{i_1}\right.\\
&\qquad\qquad\qquad\qquad\qquad\cdot\left.\left(1-f(n-i_0,p,d,i_1)\right)^{i_1g_b(n-i_0,n_j,p,d,0,i_1)}h(n-i_0,p,d,i_1)\right).
\end{align*}
We can deduce that $h(n-i_0,p,d,i_1)\leq h(n-i_0,p,d,4npi_0)$ and from Lemma \ref{lem3}, we can deduce that $f(n,p,d,4npi_0)<f(n-i_0,p,d,i_1)$. As well, $g_b(n-i_0,n_j-i_0,p,d,0,n-n_j)\leq g_b(n-i_0,n_j-i_0,p,d,0,i_1)$. Thus we have
\begin{align*}
&\quad D_b(n,n_j,p,d+1,i_0)\\
&<\left(1-\frac{4}{5}\left(\frac{e}{3}\right)^{4npi_0}\right)^{-1}h(n-i_0,p,d,4npi_0)(1-p)^{i_0n_j}\\
&\quad\cdot\left(1+\sum_{i_1=1}^{n_j-1}\binom{n_j-1}{i_1}((1-p)^{-i_0}-1)^{i_1}\left(1-f(n,p,d,4npi_0)\right)^{i_1g_b(n-i_0,n_j,p,d,0,n_j-1)}\right)\\
&<h(n,p,d+1,i_0)(1-p)^{i_0n_j}\\
&\quad\cdot\left(1+\left((1-p)^{-i_0}-1\right)\left(1-f(n,p,d,4npi_0)\right)^{g_b(n-i_0,n_j,p,d,0,n_j-1)}\right)^{n_j-1}\\
&<h(n,p,d+1,i_0)(1-p)^{i_0}\\
&\quad\cdot\left((1-p)^{i_0}+\left(1-(1-p)^{i_0}\right)\left(1-f(n,p,d,4npi_0)\right)^{g_b(n-i_0,n_j,p,d,0,n_j-1)}\right)^{n_j-1}.
\end{align*}
We note that $g_b(n-i_0,n_j,p,d,0,n_j-1)<n^{d-1}$ and so, using Lemma \ref{lem1}, we thus have
\begin{align*}
&\quad D_b(n,n_j,p,d+1,i_0)\\
&<h(n,p,d+1,i_0)\\
&\quad\cdot\left(1-p\left(\frac{1-(1-p)^{i_0}}{pi_0}\right)\left(\frac{1-(1-p^d)^{n^{d-1}}}{n^{d-1}p^d}\right)f(n,p,d,4npi_0)\right)^{i_0n_jg_b(n-i_0,n_j,p,d,0,n_j-1)+i_0}\\
&=h(n,p,d+1,i_0)\left(1-f(n,p,d+1,i_0)\right)^{i_0n_jg_b(n-i_0,n_j,p,d,0,n_j-1)}.
\end{align*}
We can deduce that $(n_j-1)g_b(n-i_0,n_j,p,d,0,n_j-1)+1>g_b(n,n_j,p,d+1,0,i_0)$ and so we have \eqref{upperdegb}.
\end{case}
\begin{case}{$i_0\leq\frac{n_j-1}{4np+(4np)^3+\ldots+(4np)^{d'}}$, $i_0\leq\frac{n-n_j}{1+(4np)^2+\ldots+(4np)^{d'-1}}$ OR $i_0\leq\frac{n_j-1}{4np+(4np)^3+\ldots+(4np)^{d'-1}}$, $i_0\leq\frac{n-n_j}{1+(4np)^2+\ldots+(4np)^{d'}}$}
\normalfont
\newline
\newline
Given a set of $i_1$ vertices and one additional vertex, say $v$, in a graph from $G(n-i_0,p)$, we know that $D_b(n-i_0,p,d,i_1)$ is the probability that the edge distance between $v$ and any of the $i_1$ vertices is greater than $d$. By adding one more vertex to our set of $i_1$ vertices, it therefore follows that $D_b(n-i_0,p,d,i_1+1)\leq D_b(n-i_0,p,d,i_1)$. Thus, by Lemma \ref{4nplemma}, we have
\begin{align*}
&\quad D_b(n,n_j,p,d+1,i_0)\\
&<\left(1-\frac{4}{5}\left(\frac{e}{3}\right)^{4npi_0}\right)^{-1}(1-p)^{i_0n_j}\sum_{i_1=0}^{4npi_0}\binom{n_j-1}{i_1}((1-p)^{-i_0}-1)^{i_1}D_b(n-i_0,n_j-i_0,p,d,i_1)\\
&<\left(1-\frac{4}{5}\left(\frac{e}{3}\right)^{4npi_0}\right)^{-1}(1-p)^{i_0n_j}\\
&\quad\cdot\left(1+\sum_{i_1=1}^{4npi_0}\binom{n_j-1}{i_1}((1-p)^{-i_0}-1)^{i_1}\right.\\
&\qquad\qquad\qquad\qquad\qquad\cdot\left.\left(1-f(n-i_0,p,d,i_1)\right)^{i_1g_b(n-i_0,n_j,p,d,d',i_1)}h(n-i_0,p,d,i_1)\right).
\end{align*}
We can deduce that $h(n-i_0,p,d,i_1)\leq h(n-i_0,p,d,4npi_0)$ and from Lemma \ref{lem3}, we can deduce that $f(n,p,d,4npi_0)<f(n-i_0,p,d,i_1)$. As well, $g_b(n-i_0,n_j-i_0,p,d,d',4npi_0)\leq g_b(n-i_0,n_j-i_0,p,d,d',i_1)$. Thus we have
\begin{align*}
&\quad D_b(n,n_j,p,d+1,i_0)\\
&<\left(1-\frac{4}{5}\left(\frac{e}{3}\right)^{4npi_0}\right)^{-1}h(n-i_0,p,d,4npi_0)(1-p)^{i_0n_j}\\
&\quad\cdot\left(1+\sum_{i_1=1}^{n_j-1}\binom{n_j-1}{i_1}((1-p)^{-i_0}-1)^{i_1}\left(1-f(n,p,d,4npi_0)\right)^{i_1g_b(n-i_0,n_j,p,d,d',4npi_0)}\right)\\
&<h(n,p,d+1,i_0)(1-p)^{i_0n_j}\\
&\quad\cdot\left(1+\left((1-p)^{-i_0}-1\right)\left(1-f(n,p,d,4npi_0)\right)^{g_b(n-i_0,n_j,p,d,d',4npi_0)}\right)^{n_j-1}\\
&<h(n,p,d+1,i_0)(1-p)^{i_0}\\
&\quad\cdot\left((1-p)^{i_0}+\left(1-(1-p)^{i_0}\right)\left(1-f(n,p,d,4npi_0)\right)^{g_b(n-i_0,n_j,p,d,d',4npi_0)}\right)^{n_j-1}.
\end{align*}
We note that $g_b(n-i_0,n_j,p,d,0,n_j-1)<n^{d-1}$ and so, using Lemma \ref{lem1}, we thus have
\begin{align*}
&\quad D_b(n,n_j,p,d+1,i_0)\\
&<h(n,p,d+1,i_0)\\
&\quad\cdot\left(1-p\left(\frac{1-(1-p)^{i_0}}{pi_0}\right)\left(\frac{1-(1-p^d)^{n^{d-1}}}{n^{d-1}p^d}\right)f(n,p,d,4npi_0)\right)^{i_0n_jg_b(n-i_0,n_j,p,d,d',4npi_0)}\\
&=h(n,p,d+1,i_0)\left(1-f(n,p,d+1,i_0)\right)^{i_0n_jg_b(n-i_0,n_j,p,d,d',4npi_0)+i_0}.
\end{align*}
We can deduce that $(n_j-1)g_b(n-i_0,n_j,p,d,d',4npi_0)+1>g_b(n,n_j,p,d+1,d'+1,i_0)$ and so we have \eqref{upperdegb}.
\end{case}
By \eqref{upperdegb}, we have
\begin{equation*}
\sum_{b\in B}\deg b<s^{n_1n_2}\sum_{j=1}^{2}\binom{n_j}{2}h(n,p,d,1)\left(1-f(n,p,d,1)\right)^{g_b(n,n_j,p,d,d',1)}
\end{equation*}
if $d$ is even, and
\begin{align*}
\sum_{b\in B}\deg b&<s^{n_1n_2}n_1n_2h(n,p,d,1)\left(1-f(n,p,d,1)\right)^{g_b(n,n_1,p,d,d',1)}
\end{align*}
if $d$ is odd.
Hence, by the simple sieve, we have
\begin{align*}
P(G(n_1,n_2,p),d)&>1-\sum_{j=1}^{2}\binom{n_j}{2}h(n,p,d,1)\left(1-f(n,p,d,1)\right)^{g_b(n,n_j,p,d,d',1)}
\end{align*}
if $d$ is even, and
\begin{align*}
P(G(n_1,n_2,p),d)&>1-n_1n_2h(n,p,d,1)\left(1-f(n,p,d,1)\right)^{g_b(n,n_1,p,d,d',1)}
\end{align*}
if $d$ is odd.
\newline
\newline
We now calculate $n(b_1,b_2)$ to get an upperbound for $\sum_{i=1}^{d}P(G(n_1,n_2,p),i)$ using the Tur\'an sieve. If the two pairs of vertices $b_1$ and $b_2$ are the same, then we just have $n(b_1,b_2)=\deg b$. If $b_1$ and $b_2$ have exactly one vertex in common, then we can see that $n(b_1,b_2)=C_b(n,n_j,r,s,d,2)$ and use \eqref{upperdegbbipartite}. Hence the only question is when the two pairs of vertices are disjoint.
\newline
\newline
As in our calculations for $\deg b$, to help calculate $n(b_1,b_2)$ in this case, we will calculate a generalised notion of $n(b_1,b_2)$ as follows. Suppose that $d$ is even. Let $0\leq i_0\leq n_j-2$ and $0\leq i_0'\leq n_j-2$ where $i_0+i_0'\leq n_j-2$. Pick two disjoint sets of vertices having $i_0$ and $i_0'$ vertices out of the $n_j$ labeled vertices in one of the partite sets, as well as two other vertices, say $v$ and $v'$, out of the same set. Suppose that $d$ is odd. Let $0\leq i_0\leq n-n_j$ and $0\leq i_0'\leq n-n_j$ where $i_0+i_0'\leq n-n_j$. Pick two disjoint sets of vertices having $i_0$ and $i_0'$ vertices out of the $n-n_j$ labeled vertices in one of the partite sets, as well as two other vertices, say $v$ and $v'$, out of the opposite partite set consisting of $n_j$ vertices. In both cases, we will let $C_b'(n,n_j,r,s,d,i_0,i_0')$ denote the number of graphs in $A$ such that there is no path from any of the $i_0$ vertices to vertex $v$ that consists of at most $d$ edges, as well as the requirement that there is no path from any of the $i_0'$ vertices to the vertex $v'$ that consists of at most $d$ edges (note that this does not generalise the construction where $d$ is even and $b_1$ is a pair of vertices from the set of $n_1$ (or $n_2$) vertices and $b_2$ is a pair of vertices from the set of $n_2$ (or $n_1$ respectively) vertices, we will return to this case later). If $i_0=0$, then we have $C_b'(n,n_j,r,s,d,i_0,i_0')=C_b(n,n_j,r,s,d,i_0')$ and if $i_0'=0$, then we have $C_b'(n,n_j,r,s,d,i_0,i_0')=C_b(n,n_j,r,s,d,i_0)$. So suppose that $i_0,i_0'>0$. If $d$ is odd, then we have
\begin{align}
C_b'(n,n_j,r,s,d+1,i_0,i_0')&<\sum_{i_1=0}^{n-n_j}\binom{n-n_j}{i_1}\left(s^{i_0}-(s-r)^{i_0}\right)^{i_1}(s-r)^{i_0(n-n_j-i_1)}\nonumber\\
&\quad\cdot\sum_{i_1'=0}^{n-n_j-i_1}\binom{n-n_j-i_1}{i_1'}\left(s^{i_0'}-(s-r)^{i_0'}\right)^{i_1'}(s-r)^{i_0'(n-n_j-i_1-i_1')}s^{i_1i_0'}\nonumber\\
&\quad\cdot C_b'(n-i_0-i_0',n_j-i_0-i_0',r,s,d,i_1,i_1')\label{eqn27}
\end{align} 
valid for all $1\leq i_0,i_0'\leq n_j-3$ with $i_0+i_0'\leq n_j-2$. If $d$ is even, then we have
\begin{align}
C_b'(n,n_j,r,s,d+1,i_0,i_0')&<\sum_{i_1=0}^{n_j-2}\binom{n_j-2}{i_1}\left(s^{i_0}-(s-r)^{i_0}\right)^{i_1}(s-r)^{i_0(n_j-i_1-1)}s^{i_0}\nonumber\\
&\quad\cdot\sum_{i_1'=0}^{n_j-2-i_1}\binom{n_j-2-i_1}{i_1'}\left(s^{i_0'}-(s-r)^{i_0'}\right)^{i_1'}(s-r)^{i_0'(n_j-i_1-i_1'-1)}s^{i_1i_0'+i_0'}\nonumber\\
&\quad\cdot C_b'(n-i_0-i_0',n_j,r,s,d,i_1,i_1')\label{eqn28}
\end{align} 
valid for all $1\leq i_0,i_0'\leq n-n_j-1$ with $i_0+i_0'\leq n-n_j$. As well,
\begin{equation*}
C_b'(n,n_j,r,s,1,i_0,i_0')=(s-r)^{i_0+i_0'}s^{n_1n_2-i_0-i_0'}
\end{equation*}
for all $0\leq i_0,i_0'\leq n-n_j$ with $i_0+i_0'\leq n-n_j$, completing the formula. Then we can deduce that $C_b(n,n_j,r,s,d,1,1)=n(b_1,b_2)$ if we are working with diameter $d$. Let $D_b'(n,p,d,i_0,i_0')=\frac{C_b'(n,r,s,1,i_0,i_0')}{s^{n_1n_2}}$ so that $D_b'(n,n_j,p,d,i_0,i_0')$ is the probability that the edge distance between $v$ and any of the $i_0$ vertices is greater than $d$ and that the edge distance between $v'$ and any of the $i_0'$ vertices is greater than $d$. We will prove that
\begin{equation}\label{D'Dbipartite}
D_b'(n,n_j,p,d,i_0,i_0')\leq D_b(n-1,n_j-1,p,d,i_0+i_0')
\end{equation}
for $0<p<1$, $0\leq i_0,i_0'\leq n_j-2$, $i_0+i_0'\leq n_j-2$ if we assume that $d\geq 2$ is even, and for $0<p<1$, $0\leq i_0,i_0'\leq n-n_j$, $i_0+i_0'\leq n-n_j$ if we assume that $d\geq 1$ is odd. For $d=1$, we have
\begin{align*}
D_b'(n,n_j,p,1,i_0,i_0')=(1-p)^{i_0+i_0'}=D_b(n-1,n_j-1,p,1,i_0+i_0')
\end{align*}
so \eqref{D'Dbipartite} holds for $d=1$. Suppose for some odd $d\geq 1$ \eqref{D'Dbipartite} holds for all $n\in\mathbb{N}$, $0\leq i_0,i_0'\leq n-n_j$, $i_0+i_0'\leq n-n_j$, $0<p<1$. We can see that \eqref{D'Dbipartite} holds if $i_0=0$ or $i_0'=0$. So assume that $0<i_0,i_0'\leq n_j-3$ with $i_0+i_0'\leq n_j-2$. First we have
\begin{align*}
D_b'(n,n_j,p,d+1,i_0,i_0')&<\sum_{i_1=0}^{n-n_j}\binom{n-n_j}{i_1}\left(1-(1-p)^{i_0}\right)^{i_1}(1-p)^{i_0(n-n_j-i_1)}\\
&\quad\cdot\sum_{i_1'=0}^{n-n_j-i_1}\binom{n-n_j-i_1}{i_1'}\left(1-(1-p)^{i_0'}\right)^{i_1'}(1-p)^{i_0'(n-n_j-i_1-i_1')}\\
&\quad\cdot D_b'(n-i_0-i_0',n_j-i_0-i_0',p,d,i_1,i_1')\\
&<(1-p)^{i_0(n-n_j)}\sum_{i_1=0}^{n-n_j}\binom{n-n_j}{i_1}\left((1-p)^{-i_0}-1\right)^{i_1}\\
&\quad\cdot(1-p)^{i_0'(n-n_j-i_1)}\sum_{i_1'=0}^{n-n_j-i_1}\binom{n-n_j-i_1}{i_1'}\left((1-p)^{-i_0'}-1\right)^{i_1'}\\
&\quad\cdot D_b(n-1-i_0-i_0',n_j-1-i_0-i_0',p,d,i_1+i_1').
\end{align*}
Writing $k=i_1+i_1'$, we have
\begin{align*}
D_b'(n,p,d+1,i_0,i_0')&<(1-p)^{(i_0+i_0')(n-n_j)}\sum_{k=0}^{n-n_j}\binom{n-n_j}{k}D_b(n-1-i_0-i_0',n_j-1-i_0-i_0',p,d,k)\\
&\quad\cdot\left((1-p)^{-i_0'}-1\right)^{k}\sum_{i_1=0}^k\binom{k}{i_1}\left((1-p)^{-i_0}-1\right)^{i_1}(1-p)^{-i_1i_0'}\left((1-p)^{-i_0'}-1\right)^{-i_1}\\
&=(1-p)^{(i_0+i_0')(n-n_j)}\sum_{k=0}^{n-n_j}\binom{n-n_j}{k}D_b(n-1-i_0-i_0',n_j-1-i_0-i_0',p,d,k)\\
&\quad\cdot\left((1-p)^{-i_0'}-1\right)^{k}\sum_{i_1=0}^k\binom{k}{i_1}\left(\frac{(1-p)^{-i_0-i_0'}-(1-p)^{-i_0'}}{(1-p)^{-i_0'}-1}\right)^{i_1}\\
&=(1-p)^{(i_0+i_0')(n-n_j)}\sum_{k=0}^{n-n_j}\binom{n-n_j}{k}D_b(n-1-i_0-i_0',n_j-1-i_0-i_0',p,d,k)\\
&\quad\cdot\left((1-p)^{-i_0'}-1\right)^{k}\left(1+\frac{(1-p)^{-i_0-i_0'}-(1-p)^{-i_0'}}{(1-p)^{-i_0'}-1}\right)^{k}\\
&=(1-p)^{(i_0+i_0')(n-n_j)}\sum_{k=0}^{n-n_j}\binom{n-n_j}{k}D_b(n-1-i_0-i_0',n_j-1-i_0-i_0',p,d,k)\\
&\quad\cdot\left((1-p)^{-i_0-i_0'}-1\right)^{k}\\
&=D_b(n-1,n_j-1,p,d+1,i_0+i_0').
\end{align*}
Thus \eqref{D'Dbipartite} holds for $d+1$. Suppose for some even $d\geq 2$ \eqref{D'Dbipartite} holds for all $n\in\mathbb{N}$, $0\leq i_0,i_0'\leq n_j-2$, $i_0+i_0'\leq n-n_j-1$, $0<p<1$. Assume that $0<i_0,i_0'\leq n-n_j-1$ with $i_0+i_0'\leq n-n_j$. First we have
\begin{align*}
D_b'(n,n_j,p,d+1,i_0,i_0')&<\sum_{i_1=0}^{n_j-2}\binom{n_j-2}{i_1}\left(1-(1-p)^{i_0}\right)^{i_1}(1-p)^{i_0(n_j-i_1-1)}\\
&\quad\cdot\sum_{i_1'=0}^{n_j-2-i_1}\binom{n_j-2-i_1}{i_1'}\left(1-(1-p)^{i_0'}\right)^{i_1'}(1-p)^{i_0'(n_j-i_1-i_1'-1)}\\
&\quad\cdot D_b'(n-i_0-i_0',n_j,p,d,i_1,i_1')\\
&<(1-p)^{i_0(n_j-1)}\sum_{i_1=0}^{n_j-2}\binom{n_j-2}{i_1}\left((1-p)^{-i_0}-1\right)^{i_1}\\
&\quad\cdot(1-p)^{i_0'(n_j-i_1-1)}\sum_{i_1'=0}^{n_j-2-i_1}\binom{n_j-2-i_1}{i_1'}\left((1-p)^{-i_0'}-1\right)^{i_1'}\\
&\quad\cdot D_b(n-1-i_0-i_0',n_j-1,p,d,i_1+i_1').
\end{align*}
Writing $k=i_1+i_1'$, we have
\begin{align*}
D_b'(n,n_j,p,d+1,i_0,i_0')&<(1-p)^{(i_0+i_0')(n_j-1)}\sum_{k=0}^{n_j-2}\binom{n_j-2}{k}D_b(n-1-i_0-i_0',n_j-1,p,d,k)\\
&\quad\cdot\left((1-p)^{-i_0'}-1\right)^{k}\sum_{i_1=0}^k\binom{k}{i_1}\left((1-p)^{-i_0}-1\right)^{i_1}(1-p)^{-i_1i_0'}\left((1-p)^{-i_0'}-1\right)^{-i_1}\\
&=(1-p)^{(i_0+i_0')(n_j-1)}\sum_{k=0}^{n_j-2}\binom{n_j-2}{k}D_b(n-1-i_0-i_0',n_j-1,p,d,k)\\
&\quad\cdot\left((1-p)^{-i_0'}-1\right)^{k}\sum_{i_1=0}^k\binom{k}{i_1}\left(\frac{(1-p)^{-i_0-i_0'}-(1-p)^{-i_0'}}{(1-p)^{-i_0'}-1}\right)^{i_1}\\
&=(1-p)^{(i_0+i_0')(n_j-1)}\sum_{k=0}^{n_j-2}\binom{n_j-2}{k}D_b(n-1-i_0-i_0',n_j-1,p,d,k)\\
&\quad\cdot\left((1-p)^{-i_0'}-1\right)^{k}\left(1+\frac{(1-p)^{-i_0-i_0'}-(1-p)^{-i_0'}}{(1-p)^{-i_0'}-1}\right)^{k}\\
&=(1-p)^{(i_0+i_0')(n_j-1)}\sum_{k=0}^{n_j-2}\binom{n_j-2}{k}D_b(n-1-i_0-i_0',n_j-1,p,d,k)\\
&\quad\cdot\left((1-p)^{-i_0-i_0'}-1\right)^{k}\\
&<D_b(n-1,n_j-1,p,d+1,i_0+i_0').
\end{align*}
Thus \eqref{D'Dbipartite} holds for $d+1$.
\newline
\newline
We now generalise the construction for $n(b_1,b_2)$ where $d$ is even and $b_1$ is a pair of vertices from the set of $n_1$ (or $n_2$) vertices and $b_2$ is a pair of vertices from the set of $n_2$ (or $n_1$ respectively). Pick $i_0$ vertices out of the set of $n_1$ vertices if $d$ is even or out of the set of $n_2$ vertices if $d$ is odd. Also, pick $i_0'$ out of the set of $n_2$ vertices if $d$ is even or out of the set of $n_1$ of vertices if $d$ is odd. Also, pick another vertex $v$ out of the set of $n_1$ vertices and another vertex $'$ out of the set of $n_2$ vertices. Let $C_b''(n_1,n_2,r,s,d,i_0,i_0')$ denote the number of graphs in $A$ such that there is no path from any of the $i_0$ vertices to vertex $v$ that consists of at most $d$ dges, as well as fulfilling the requirement that there is no path from any of the $i_0'$ vertices to the vertex $v'$ that consists of at most $d$ edges. Suppose $i_0,i_0'>0$. If $d$ is odd, then we have
\begin{align}
C_b''(n_1,n_2,r,s,d+1,i_0,i_0')&<\sum_{i_1=0}^{n_2-i_0'-1}\binom{n_2-i_0'-1}{i_1}\left(s^{i_0}-(s-r)^{i_0}\right)^{i_1}(s-r)^{i_0(n_2-i_0'-i_1-1)}\nonumber\\
&\quad\cdot\sum_{i_1'=0}^{n_1-i_0-1}\binom{n_1-i_0-1}{i_1'}\left(s^{i_0'}-(s-r)^{i_0'}\right)^{i_1'}(s-r)^{i_0'(n_1-i_0-i_1'-1)}\nonumber\\
&\quad\cdot\sum_{l_1=0}^{i_0}\sum_{l_2=0}^{i_0'}\binom{i_0}{l_1}\binom{i_0'}{l_2}r^{l_1+l_2}(s-r)^{i_0-l_1+i_0'-l_2+l_1i_0'+l_2i_0-l_1l_2}s^{i_0i_0'-l_1i_0'-l_2i_0+l_1l_2}\nonumber\\
&\quad\cdot C_b''(n_1-i_0,n_2-i_0',r,s,d,i_1,i_1')\label{eqn29}
\end{align}
valid for all $1\leq i_0\leq n_1-1$, $1\leq i_0'\leq n_2-1$. If $d$ is even, then we have
\begin{align}
C_b''(n_1,n_2,r,s,d+1,i_0,i_0')&<\sum_{i_1=0}^{n_1-i_0'-1}\binom{n_1-i_0'-1}{i_1}\left(s^{i_0}-(s-r)^{i_0}\right)^{i_1}(s-r)^{i_0(n_1-i_0'-i_1)}s^{i_0i_0'}\nonumber\\
&\quad\cdot\sum_{i_1'=0}^{n_2-i_0-1}\binom{n_2-i_0-1}{i_1'}\left(s^{i_0'}-(s-r)^{i_0'}\right)^{i_1'}(s-r)^{i_0'(n_2-i_0-i_1')}\nonumber\\
&\quad\cdot C_b''(n_1-i_0',n_2-i_0,r,s,d,i_1,i_1')\label{eqn30}
\end{align}
valid for all $1\leq i_0\leq n_2-1$, $1\leq i_0'\leq n_1-1$. As well,
\begin{equation*}
C_b''(n_1,n_2,r,s,1,i_0,i_0')=(s-r)^{i_0+i_0'}s^{n_1n_2-i_0-i_0'}
\end{equation*}
for all $0\leq i_0\leq n_2-1$, $0\leq i_0'\leq n_1-1$, completing the formula. Then we can deduce that $C_b(n_1,n_2,r,s,d,1,1)=n(b_1,b_2)$ if we are working with diameter $d$. Let $D_b'(n_1,n_2,p,d,i_0,i_0')=\frac{C_b''(n_1n_2,r,s,d,i_0,i_0')}{s^{n_1n_2}}$ so that $D_b'(n_1,n_2,r,s,d,i_0,i_0')$ is the probability that the edge distance between $v$ and any of the $i_0$ vertices is greater than $d$ and that the edge  distance between $v'$ and any of the $i_0'$ vertices is greater than $d$. We claim that
\begin{align}
D_b''(n_1,n_2,p,d,i_0,i_0')&<h(n,p,d,i_0+i_0')^2\nonumber\\
&\quad\cdot\left(1-f(n,p,d,i_0+i_0')\right)^{i_0g(n_1,n_2,p,d,d',i_0,i_0')+i_0'g(n_2,n_1,p,d,d',i_0',i_0)}\label{upperdegbbipartite2}
\end{align}
if $d\geq 3$ is odd, $1\leq\frac{n_1-1}{i_0'+4npi_0+(4np)^2i_0'+(4np)^3i_0+\ldots+(4np)^{d'}i_0''}$, and $1\leq\frac{n_2-1}{i_0+4npi_0'+(4np)^2i_0+(4np)^3i_0'+\ldots+(4np)^{d'}i_0''}$ where $i_0''=i_0$ or $i_0''=i_0'$ depending on the parity of $d'$ and the inequality in question. Also,
\begin{align}
D_b''(n_1,n_2,p,d,i_0,i_0')&<h(n,p,d,i_0+i_0')^2\nonumber\\
&\quad\cdot\left(1-f(n,p,d,i_0+i_0')\right)^{i_0g(n_1,n_2p,d,d',i_0,i_0')+i_0'g(n_2,n_1,p,d,d',i_0',i_0)}\label{upperdegbbipartite3}
\end{align}
if $d\geq 2$ is even, $1\leq\frac{n_1-1}{i_0+4npi_0'+(4np)^2i_0+(4np)^3i_0'+\ldots+(4np)^{d'}i_0''}$, and $1\leq\frac{n_2-1}{i_0'+4npi_0+(4np)^2i_0'+(4np)^3i_0+\ldots+(4np)^{d'}i_0''}$ where $i_0''=i_0$ or $i_0''=i_0'$ depending on the parity of $d'$ and the inequality in question.
\newline
\newline
We prove by induction on $d$ in the same way that we proved \eqref{upperdegb} and \eqref{upperdegbbipartite}. For illustration purposes, we prove the base case $d=2$ and one of the cases for the induction step. First, two lemmas:
\begin{lemma}\label{lem4}
Let $x_1,x_2\geq 0$. Then we have
\begin{equation*}
\left(1-p+p(1-p)^{x_1}\right)\left(1-p+p(1-p)^{x_2}\right)\leq 1-p+p(1-p)^{x_1+x_2}.
\end{equation*}
\end{lemma}
\begin{proof}
Note that
\begin{equation*}
0<\left(1-(1-p)^{x_1}\right)\left(1-(1-p)^{x_2}\right)=1-(1-p)^{x_1}-(1-p)^{x_2}+(1-p)^{x_1+x_2}
\end{equation*}
so that
\begin{align*}
\left(1-p+p(1-p)^{x_1}\right)\left(1-p+p(1-p)^{x_2}\right)&=(1-p)^2+p(1-p)^{x_1+1}+p(1-p)^{x_2+1}+p^2(1-p)^{x_1+x_2}\\
&\leq(1-p)^2+p(1-p)\left((1-p)^{x_1+x_2}+1\right)+p^2(1-p)^{x_1+x_2}\\
&=1-p+p(1-p)^{x_1+x_2}.
\end{align*}
\end{proof}
\begin{lemma}\label{lem5}
Let $r_1,r_2\in\mathbb{N}$. Then
\begin{equation*}
\sum_{m_1=0}^{r_1}\sum_{m_2=0}^{r_2}\binom{r_1}{m_1}\binom{r_2}{m_2}p^{m_1+m_2}(1-p)^{r_1-m_1+r_2-m_2+m_1r_2+m_2r_1-m_1m_2}<1-p+p(1-p)^{r_1+r_2}.    
\end{equation*}
\end{lemma}
\begin{proof}
Wlog we may assume that $r_1\leq r_2$. We have
\begin{align*}
&\quad\sum_{m_1=0}^{r_1}\sum_{m_2=0}^{r_2}\binom{r_1}{m_1}\binom{r_2}{m_2}p^{m_1+m_2}(1-p)^{r_1-m_1+r_2-m_2+m_1r_2+m_2r_1-m_1m_2}\\
&=\sum_{m_1=0}^{r_1}\binom{r_1}{m_1}p^{m_1}(1-p)^{r_1-m_1+r_2+m_1r_2}\left(1+p(1-p)^{r_1-m_1-1}\right)^{r_2}.
\end{align*}
If $r_1=1$, then we have
\begin{align*}
&\quad\sum_{m_1=0}^{1}\binom{1}{m_1}p^{m_1}(1-p)^{1-m_1+r_2+m_1r_2}\left(1+p(1-p)^{1-m_1-1}\right)^{r_2}\\
&=(1-p)^{1+r_2}(1+p)+p(1-p)^{2r_2}\\
&<1-p+p(1-p)^{r_2+1}.
\end{align*}
If $r_1\geq 2$, then we have the following:
\begin{align*}
&\quad\sum_{m_1=0}^{r_1}\binom{r_1}{m_1}p^{m_1}(1-p)^{r_1-m_1+r_2+m_1r_2}\left(1+p(1-p)^{r_1-m_1-1}\right)^{r_2}\\
&\leq(1-p)^{r_1+r_2}\sum_{m_1=0}^{r_1}\binom{r_1}{m_1}p^{m_1}(1-p)^{(r_2-1)m_1}\\
&=(1-p)^{r_2}\left(1-p+p(1-p)^{r_2}\right)^{r_1}\\
&<\left(1-p+p(1-p)^{r_2}\right)^2\\
&\leq 1-p+p(1-p)^{2r_2}\\
&\leq 1-p+p(1-p)^{r_1+r_2}
\end{align*}
with the second last inequality following from Lemma \ref{lem4}.
\end{proof}
For $d=2$, we have
\begin{align*}
D_b''(n_1,n_2,p,2,i_0,i_0')&<(1-p)^{i_0(n_2-i_0'-1)}\sum_{i_1=0}^{n_2-i_0'-1}\binom{n_2-i_0'-1}{i_1}\left((1-p)^{-i_0}-1\right)^{i_1}\\
&\quad\cdot(1-p)^{i_0(n_1-i_0-1)}\sum_{i_1'=0}^{n_1-i_0-1}\binom{n_1-i_0-1}{i_1}\left((1-p)^{-i_0'}-1\right)^{i_1'}(1-p)^{i_1+i_1'}\\
&\quad\cdot\sum_{l_1=0}^{i_0}\sum_{l_2=0}^{i_0'}\binom{i_0}{l_1}\binom{i_0'}{l_2}p^{l_1+l_2}(1-p)^{i_0-l_1+i_0'-l_2+l_1i_0'+l_2i_0-l_1l_2}\\
&<\left(1-p+p(1-p)^{i_0}\right)^{n_2-i_0'-1}\left(1-p+p(1-p)^{i_0'}\right)^{n_1-i_0-1}\\
&\quad\cdot\left(1-p+p(1-p)^{i_0+i_0'}\right)\\
&<\left(1-f(n,p,2,i_0)^{i_0(n_2-i_0'-1)}f(n,p,2,i_0')\right)^{i_0'(n_1-i_0-1)}\left(1-f(n,p,2,i_0+i_0')^{i_0+i_0'}\right)\\
&<\left(1-f(n,p,2,i_0+i_0')\right)^{i_0(n_2-i_0')+i_0'(n_1-i_0)}
\end{align*}
with the second inequality following from Lemma \ref{lem5}. Suppose for some odd $d\geq 3$, \eqref{upperdegbbipartite2} holds for all $i_0,i_0'$ in the stated ranges, and $0<p<1$. We will prove \eqref{upperdegbbipartite3} holds for $d+1$. We have
\begin{align*}
D_b''(n_1,n_2,p,d+1,i_0,i_0')&<(1-p)^{i_0(n_2-i_0'-1)+i_0'(n_1-i_0-1)}\left(1-p+p(1-p)^{i_0+i_0'}\right)\\
&\quad\cdot\sum_{i_1=0}^{n_2-i_0'-1}\binom{n_2-i_0'-1}{i_1}\left((1-p)^{-i_0}-1\right)^{i_1}\\
&\quad\cdot\sum_{i_1'=0}^{n_1-i_0-1}\binom{n_1-i_0-1}{i_1'}\left((1-p)^{-i_0'}-1\right)^{i_1'}\\
&\quad\cdot D_b''(n_1-i_0,n_2-i_0',r,s,d,i_1,i_1').
\end{align*}
The case that we will prove follows.
\setcounter{case}{0}
\begin{case}{$\frac{n-1}{1+4np}<i_0\leq n-1$.}
\normalfont
\newline
\newline
We have the following:
\begin{align*}
D_b''(n_1,n_2,p,d+1,i_0,i_0')&<(1-p)^{i_0(n_2-i_0'-1)+i_0'(n_1-i_0-1)}\left(1-p+p(1-p)^{i_0+i_0'}\right)\\
&\quad\cdot\left(1+\sum_{i_1=1}^{n_2-i_0'-1}\binom{n_2-i_0'-1}{i_1}\left((1-p)^{-i_0}-1\right)^{i_1}\right.\\
&\qquad\qquad\cdot\left(1+\sum_{i_1'=1}^{n_1-i_0-1}\binom{n_1-i_0-1}{i_1'}\left((1-p)^{-i_0'}-1\right)^{i_1'}\right.\\
&\qquad\qquad\qquad\cdot h(n-i_0-i_0',p,d,i_1+i_1')h(n-i_0-i_0',p,d,i_1+i_1')\\
&\qquad\qquad\qquad\cdot\left(1-f(n-i_0-i_0',p,d,i_1+i_1')\right)^{i_1g(n_2-i_0',n_1-i_0,p,d,0,i_1,i_1')}\\
&\qquad\qquad\qquad\cdot\left(1-f(n-i_0-i_0',p,d,i_1+i_1')\right)^{i_1'g(n_1-i_0,n_2-i_0',p,d,0,i_1',i_1)}.
\end{align*}
We can deduce that $h(n-i_0-i_0',p,d,i_1+i_1')\leq h(n-i_0-i_0',p,d,4np(i_0+i_0'))$. Also, from Lemma \ref{lem3}, we can deduce that $f(n,p,d,4np(i_0+i_0'))<f(n-i_0-i_0',p,d,i_1+i_1')$. As well, $g(n_2-i_0',n_1-i_0,p,d,0,n_1-i_0-1,n_2-i_0'-1)\leq g(n_2-i_0',n_1-i_0,p,d,d',i_1',i_1)$ and $g(n_1-i_0,n_2-i_0',p,d,0,n_2-i_0'-1,n_1-i_0-1)\leq g(n_1-i_0,n_2-i_0',p,d,d',i_1,i_1')$. Thus we have
\begin{align*}
&\quad D_b''(n_1,n_2,p,d+1,i_0,i_0')\\
&<h(n-i_0-i_0',p,d,4np(i_0+i_0'))^2\\
&\quad\cdot(1-p)^{i_0(n_2-i_0'-1)+i_0'(n_1-i_0-1)}\left(1-p+p(1-p)^{i_0+i_0'}\right)\\
&\quad\cdot\left(1+\sum_{i_1=1}^{n_2-i_0'-1}\binom{n_2-i_0'-1}{i_1}\left((1-p)^{-i_0}-1\right)^{i_1}\right.\\
&\qquad\qquad\left.\cdot\left(1-f(n,p,d,i_1+i_1')\right)^{i_1g(n_2-i_0',n_1-i_0,p,d,0,n_2-i_0'-1,n_1-i_0-1)}\right)\\
&\quad\cdot\left(1+\sum_{i_1'=1}^{n_1-i_0-1}\binom{n_1-i_0-1}{i_1'}\left((1-p)^{-i_0'}-1\right)^{i_1'}\right.\\
&\qquad\qquad\left.\cdot\left(1-f(n,p,d,i_1+i_1')\right)^{i_1'g(n_1-i_0,n_2-i_0',p,d,0,n_1-i_0-1,n_2-i_0'-1)}\right)\\
&<h(n,p,d+1,i_0+i_0')^2\\
&\quad\cdot(1-p)^{i_0(n_2-i_0'-1)+i_0'(n_1-i_0-1)}\left(1-p+p(1-p)^{i_0+i_0'}\right)\\
&\quad\cdot\left(1+\left((1-p)^{-i_0}-1\right)\left(1-f(n,p,d,4np(i_0+i_0'))\right)^{g(n_2-i_0',n_1-i_0,p,d,0,n_2-i_0'-1,n_1-i_0-1)}\right)^{n_2-i_0'-1}\\
&\quad\cdot\left(1+\left((1-p)^{-i_0'}-1\right)\left(1-f(n,p,d,4np(i_0+i_0'))\right)^{g(n_1-i_0,n_2-i_0',p,d,0,n_1-i_0-1,n_2-i_0'-1)}\right)^{n_1-i_0-1}\\
&<h(n,p,d+1,i_0+i_0')^2\left(1-p+p(1-p)^{i_0+i_0'}\right)\\
&\quad\cdot\left((1-p)^{i_0}+\left(1-(1-p)^{i_0}\right)\left(1-f(n,p,d,4np(i_0+i_0'))\right)^{g(n_2-i_0',n_1-i_0,p,d,0,n_2-i_0'-1,n_1-i_0-1)}\right)^{n_2-i_0'-1}\\
&\quad\cdot\left((1-p)^{i_0'}+\left(1-(1-p)^{i_0'}\right)\left(1-f(n,p,d,4np(i_0+i_0'))\right)^{g(n_1-i_0,n_2-i_0',p,d,0,n_1-i_0-1,n_2-i_0'-1)}\right)^{n_1-i_0-1}.
\end{align*}
We note that $g(n_2-i_0',n_1-i_0,p,d,0,n_1-i_0-1,n_2-i_0'-1),g(n_1-i_0,n_2-i_0',p,d,0,n_2-i_0'-1,n_1-i_0-1)<n^{d-1}$ and so, using Lemma \ref{lem1}, we thus have
\begin{align*}
D_b''(n_1,n_2,p,d+1,i_0,i_0')&<h(n,p,d+1,i_0+i_0')^2\\
&\quad\cdot\left(1-f(n,p,d+1,i_0+i_0')\right)^{i_0(n_2-i_0'-1)g(n_2-i_0',n_1-i_0,p,d,0,n_2-i_0'-1,n_1-i_0-1)+i_0}\\
&\quad\cdot\left(1-f(n,p,d+1,i_0+i_0')\right)^{i_0'(n_1-i_0-1)g(n_1-i_0,n_2-i_0',p,d,0,n_1-i_0-1,n_2-i_0'-1)+i_0'}.
\end{align*}
We can deduce that $(n_2-i_0'-1)g(n_2-i_0',n_1-i_0,p,d,0,n_2-i_0'-1,n_1-i_0-1)+1\geq g(n_1,n_2,p,d+1,0,i_0,i_0')$ and $(n_1-i_0-1)g(n_1-i_0,n_2-i_0',p,d,0,n_1-i_0-1,n_2-i_0'-1)+1\geq g(n_2,n_1,p,d+1,0,i_0',i_0)$ and so we have \eqref{upperdegbbipartite3}.
\end{case}
The rest of the cases can be proved similarly.
\newline
\newline
We can deduce that $D_b(n,n_j,p,d,i_0)\leq D_b(n-1,n_j-1,p,d,i_0)$. Thus we have $n(b_1,b_2)\leq s^{n_1n_2}D_b(n-1,n_j-1,p,d,2)$ whenever $b_1$ and $b_2$ are not the same pair of vertices with each pair having at least one vertex in the partite set consisting of $n_j$ vertices. Also, when $d$ is even, and $b_1$ and $b_2$ are two pairs of vertices with one pair having its vertices in the partite set consisting of $n_1$ vertices and the other pairs having its vertices in the partite set consisting of $n_2$ vertices we have $n(b_1,b_2)\leq s^{n_1n_2}D_b''(n_1n_2,p,d,1,1)$. Hence we can use \eqref{upperdegbbipartite} and \eqref{upperdegbbipartite3} to get an upper bound. Thus, by the Tur\'an sieve, for even $d\geq2$, we have
\begin{align*}
&\quad P(G(n_1,n_2,p),d)\\
&<\frac{\sum_{j=1}^2\left(\binom{n_j}{2}^2D_b(n-1,n_j-1,p,d,2)+\binom{n_j}{2}D_b(n,n_j,p,d,1)\right)+2\binom{n_1}{2}\binom{n_2}{2}D''(n_1,n_2,p,d,1,1)}{\left(\sum_{j=1}^2\binom{n_j}{2}D_b(n,n_j,p,d,1)\right)^2}-1\\
&=\frac{\sum_{j=1}^2\binom{n_j}{2}^2D_b(n-1,n_j-1,p,d,2)+2\binom{n_1}{2}\binom{n_2}{2}D''(n_1,n_2,p,d,1,1)}{\left(\sum_{j=1}^2\binom{n_j}{2}D_b(n,n_j,p,d,1)\right)^2}\\
&\quad-1+\frac{1}{\sum_{j=1}^2\binom{n_j}{2}D_b(n,n_j,p,d,1)}\\
&<\left(\sum_{j=1}^2\binom{n_j}{2}(1-p^d)^{(n-n_j)\left((n_1n_2)^{\frac{d-2}{2}}+\sum_{l=1}^{\frac{d-2}{2}}(n_1n_2)^{\frac{d-2}{2}-l}\left(p^{1-2l}+p^{-2l}\right)\right)}\right)^{-2}\\
&\quad\cdot\left(\left(\sum_{j=1}^2\binom{n_j}{2}^2h(n-1,p,d,2)\left(1-f(n-1,p,d,2)\right)^{2\cdot g_b(n-1,n_j-1,p,d,d',2)}\right)\right.\\
&\qquad\quad\left.+2\binom{n_1}{2}\binom{n_2}{2}h(n,p,d,2)^2\left(1-f(n,p,d,2)\right)^{g_b'(n_2,n_1,p,d,d',1,1)+g_b'(n_1,n_2,p,d,d',1,1)}\right)\\
&\quad-1\\
&\quad+\frac{1}{\sum_{i=1}^2\binom{n_i}{2}(1-p^d)^{(n-n_j)\left((n_1n_2)^{\frac{d-2}{2}}+\sum_{j=1}^{\frac{d-2}{2}}(n_1n_2)^{\frac{d-2}{2}-j}\left(p^{1-2j}+p^{-2j}\right)\right)}}
\end{align*}
and for odd $d\geq 3$, we have
\begin{align*}
P(G(n_1,n_2,p),d)&<\frac{n_1^2n_2^2D_b(n-1,n_1-1,p,d,2)+n_1n_2D_b(n,n_1,p,d,1)}{n_1^2n_2^2D_b(n,n_1,p,d,1)^2}-1\\
&=\frac{D_b(n-1,n_1-1,p,d,2)}{D_b(n,n_1,p,d,1)^2}-1+\frac{1}{n_1n_2D_b(n,n_1,p,d,1)}\\
&<(1-p^d)^{-2\left((n_1n_2)^{\frac{d-1}{2}}+\sum_{j=1}^{\frac{d-1}{2}}(n_1n_2)^{\frac{d-1}{2}-j}\left(p^{1-2j}+p^{-2j}\right)\right)}h(n-1,p,d,2)\\
&\quad\cdot f(n-1,p,d,2)^{2\cdot g_b(n-1,n_j-1,p,d,d',2)}\\
&\quad-1+\frac{1}{n_1n_2(1-p^d)^{\left((n_1n_2)^{\frac{d-1}{2}}+\sum_{j=1}^{\frac{d-1}{2}}(n_1n_2)^{\frac{d-1}{2}-j}\left(p^{1-2j}+p^{-2j}\right)\right)}}.
\end{align*}
\section{Restricted Results for Bipartite Graphs for diameter $d\geq 3$}
We impose further restrictions on $n_1,n_2$, and $p$ in Theorem \ref{bigthmbipartite} to make our result more clear and meaningful. The result is Corollary \ref{bigcor2}.
\begin{corollary}\label{bigcor2}
Let $d\geq 3$ be fixed. Suppose that \eqref{cond1} and \eqref{cond2} hold. Also suppose that $n_1\leq n_2$ and
\begin{equation}\label{cond3}
n^{1-\frac{2}{d}-\frac{1}{d^2}}\leq n_1.
\end{equation}
Suppose $d$ is even. Then we have
\begin{equation*}
P(G(n,p),d)>1-\binom{n_2}{2}\left(1+4^{d+1}dn^{\frac{-1}{2d^2}}\right)(1-p^d)^{n_1^{d/2}n_2^{d/2-1}}\left(1+\frac{n_1(n_1-1)(1-p^d)^{(n_1n_2)^{d/2-1}(n_2-n_1)}}{n_2(n_2-1)}\right).
\end{equation*}
and
\begin{align*}
P(G(n,p),d)&<\frac{2(1-p^d)^{-n_1^{d/2}n_2^{d/2-1}}\left(1+2n^{\frac{-1}{2d^2}}\right)}{n_2(n_2-1)}\left(1+\frac{n_1(n_1-1)(1-p^d)^{(n_1n_2)^{d/2-1}(n_2-n_1)}}{n_2(n_2-1)}\right)^{-1}\\
&\quad+4^{d+3}dn^{\frac{-1}{2d^2}}.
\end{align*}
Suppose $d$ is odd. Then we have
\begin{equation*}
P(G(n_1,n_2,p),d)>1-n_1n_2\left(1+4^{d+1}dn^{\frac{-1}{2d^2}}\right)(1-p^d)^{(n_1n_2)^{\frac{d-1}{2}}}.
\end{equation*}
and
\begin{equation*}
P(G(n_1,n_2,p),d)<\frac{(1-p^d)^{-(n_1n_2)^{\frac{d-1}{2}}}\left(1+2n^{\frac{-1}{2d^2}}\right)}{n_1n_2}+2\cdot 4^{d+2}dn^{\frac{-1}{2d^2}}.
\end{equation*}
\end{corollary}
We prove Corollary \ref{bigcor2}. Suppose \eqref{cond1}, \eqref{cond2}, and \eqref{cond2} all hold. As in the proof of Corollary \ref{bigcor}, we derive \eqref{eqn1} and \eqref{eqn3}. As well, we can derive that
\begin{equation*}
\frac{16(4np)^{d-3}}{7}\leq n_1-1.
\end{equation*}
Thus we can apply Theorem \ref{bigthmbipartite} for $d'=d-3$. Suppose $d$ is even. Then $d\geq 4$. From \eqref{cond1}, \eqref{cond2}, and \eqref{cond3}, we can derive
\begin{align}
g_b(n,n_j,p,d,d-3,1)&>\left(n-n_j-\frac{(4np)^{d-3}}{1-\frac{1}{16n^2p^2}}\right)^{d/2}\left(n_j-1-\frac{(4np)^{d-4}}{1-\frac{1}{16n^2p^2}}\right)^{(d-2)/2}\nonumber\\
&>(n-n_j)^{d/2}n_j^{d/2-1}\left(1-\frac{4^d(np)^{d-3}}{63(n-n_j)}-\frac{1}{n_j}-\frac{4^d(np)^{d-4}}{252n_j}\right)^{d/2}\nonumber\\
&>(n-n_j)^{d/2}n_j^{d/2-1}\left(1-\frac{4^d(np)^{d-3}}{63(n-n_j)}-\frac{4^d(np)^{d-3}}{252n_j}\right)^{d/2}\nonumber\\
&>(n-n_j)^{d/2}n_j^{d/2-1}\left(1-\frac{4^dn^{\frac{-1}{2d}-\frac{1}{2d^2}}}{63}-\frac{4^dn^{\frac{-1}{2d}-\frac{1}{2d^2}}}{252}\right)^{d/2}\nonumber\\
&>(n-n_j)^{d/2}n_j^{d/2-1}\left(1-4^{d-3}dn^{\frac{-1}{2d}-\frac{1}{2d^2}}\right)\label{eqn13},
\end{align}
\begin{align}
g_b(n-1,n_j-1,p,d,d-3,2)&>\left(n-n_j-\frac{2(4np)^{d-3}}{1-\frac{1}{16n^2p^2}}\right)^{d/2}\left(n_j-2-\frac{2(4np)^{d-4}}{1-\frac{1}{16n^2p^2}}\right)^{(d-2)/2}\nonumber\\
&>(n-n_j)^{d/2}n_j^{d/2-1}\left(1-2\cdot 4^{d-3}dn^{\frac{-1}{2d}-\frac{1}{2d^2}}\right)\label{eqn14},
\end{align}
\begin{align}
g_b'(n_2,n_1,p,d,d-3,1,1)&>\left(n_1-1-\frac{(4np)^{d-3}}{1-\frac{1}{4np}}\right)^{d/2}\left(n_2-1-\frac{(4np)^{d-3}}{1-\frac{1}{4np}}\right)^{d/2-1}\nonumber\\
&>n_1^{d/2}n_2^{d/2-1}\left(1-\frac{1}{n_1}-\frac{8(4np)^{d-3}}{7n_1}-\frac{1}{n_2}-\frac{8(4np)^{d-3}}{7n_2}\right)^{d/2}\nonumber\\
&>n_1^{d/2}n_2^{d/2-1}\left(1-\frac{9(4np)^{d-3}}{7n_1}-\frac{9(4np)^{d-3}}{7n_2}\right)^{d/2}\nonumber\\
&>n_1^{d/2}n_2^{d/2-1}\left(1-\frac{9\cdot 4^{d-3}n^{\frac{-1}{2d}-\frac{1}{2d^2}}}{7}-\frac{18n^{\frac{-5}{2d}-\frac{3}{2d^2}}}{7}\right)^{d/2}\nonumber\\
&>n_1^{d/2}n_2^{d/2-1}\left(1-\frac{9\cdot 4^{d-3}n^{\frac{-1}{2d}-\frac{1}{2d^2}}}{7}-\frac{18n^{\frac{-1}{2d}-\frac{1}{2d^2}}}{7\cdot 2^{16}}\right)^{d/2}\nonumber\\
&>n_1^{d/2}n_2^{d/2-1}\left(1-4^{d-3}dn^{\frac{-1}{2d}-\frac{1}{2d^2}}\right)\label{eqn18},
\end{align}
and
\begin{equation}
g_b'(n_1,n_2,p,d,d-3,1,1)>n_1^{d/2-1}n_2^{d/2}\left(1-4^{d-3}dn^{\frac{-1}{2d}-\frac{1}{2d^2}}\right).\label{eqn19}
\end{equation}
Substituting in \eqref{eqn1}, \eqref{eqn3}, and \eqref{eqn13} into the lower bound in Theorem \ref{bigthmbipartite}, we obtain
\begin{align}
P(G(n_1,n_2,p),d)&>1-\sum_{j=1}^{2}\binom{n_j}{2}\left(1+4^ddn^{\frac{-1}{2d^2}}\right)(1-p^d)^{(n-n_j)^{d/2}n_j^{d/2-1}\left(1-4^{d-1}dn^{\frac{-1}{2d}-\frac{1}{2d^2}}\right)\left(1-4^{d-2}dn^{\frac{-1}{2d}-\frac{1}{2d^2}}\right)}\nonumber\\
&>1-\sum_{j=1}^{2}\binom{n_j}{2}\left(1+4^ddn^{\frac{-1}{2d^2}}\right)(1-p^d)^{(n-n_j)^{d/2}n_j^{d/2-1}\left(1-2\cdot 4^{d-1}dn^{\frac{-1}{2d}-\frac{1}{2d^2}}\right)}\label{eqn15}.
\end{align}
As in the proof of Corollary \ref{bigcor}, we deduce
\begin{align*}
P(G(n_1,n_2,p),d)&>1-\sum_{j=1}^{2}\binom{n_j}{2}\left(1+4^{d+1}dn^{\frac{-1}{2d^2}}\right)(1-p^d)^{(n-n_j)^{d/2}n_j^{d/2-1}}\\
&=1-\binom{n_2}{2}\left(1+4^{d+1}dn^{\frac{-1}{2d^2}}\right)(1-p^d)^{n_1^{d/2}n_2^{d/2-1}}\left(1+\frac{n_1(n_1-1)(1-p^d)^{(n_1n_2)^{d/2-1}(n_2-n_1)}}{n_2(n_2-1)}\right).
\end{align*}
Also, from \eqref{cond2}, we have
\begin{align*}
&\quad(1-p^d)^{-2n\left(\sum_{l=1}^{\frac{d-2}{2}}(n_1n_2)^{\frac{d-2}{2}-l}\left(p^{1-2l}+p^{-2l}\right)\right)}\\
&<(1-p^d)^{-2n\left(\sum_{l=1}^{\infty}n^{d-2-2l}\left(p^{1-2l}+p^{-2l}\right)\right)}\\
&<(1-p^d)^{\frac{-16n^{d-3}p^{-2}}{3}}.
\end{align*}
Also, from \eqref{cond1} and \eqref{cond2}, we have
\begin{equation*}
\frac{16p^{d-2}n^{d-3}}{3}\leq\frac{16n^{\frac{-3}{2d}-\frac{1}{d^2}}}{3}<\frac{16n^{\frac{-1}{2d^2}}}{3}<\frac{1}{3\cdot 4^{d-2}d}\leq\frac{1}{192}.
\end{equation*}
Thus we can deduce
\begin{equation*}
(1-p^d)^{\frac{-16n^{d-3}p^{-2}}{3}}<1+\frac{192n^{\frac{-1}{2d^2}}}{191}.
\end{equation*}
Thus
\begin{align}
&\quad\left(\sum_{j=1}^2\binom{n_j}{2}(1-p^d)^{(n-n_j)\left((n_1n_2)^{\frac{d-2}{2}}+\sum_{l=1}^{\frac{d-2}{2}}(n_1n_2)^{\frac{d-2}{2}-l}\left(p^{1-2l}+p^{-2l}\right)\right)}\right)^{-2}\nonumber\\
&<\quad\left(\sum_{j=1}^2\binom{n_j}{2}(1-p^d)^{(n-n_j)(n_1n_2)^{\frac{d-2}{2}}}\right)^{-2}\left(1+\frac{192n^{\frac{-1}{2d^2}}}{191}\right)\label{eqn16}.
\end{align}
Similarly, we obtain
\begin{align}
&\quad\left(\sum_{j=1}^2\binom{n_j}{2}(1-p^d)^{(n-n_j)\left((n_1n_2)^{\frac{d-2}{2}}+\sum_{l=1}^{\frac{d-2}{2}}(n_1n_2)^{\frac{d-2}{2}-l}\left(p^{1-2l}+p^{-2l}\right)\right)}\right)^{-1}\nonumber\\
&<\quad\left(\sum_{j=1}^2\binom{n_j}{2}(1-p^d)^{(n-n_j)(n_1n_2)^{\frac{d-2}{2}}}\right)^{-1}\left(1+\frac{384n^{\frac{-1}{2d^2}}}{383}\right)\label{eqn20}.
\end{align}
Note that by \eqref{eqn1}, \eqref{eqn3}, \eqref{eqn9}, \eqref{eqn14}, \eqref{eqn18}, and \eqref{eqn19} we have
\begin{align}
&\quad h(n-1,p,d,2)\left(1-f(n-1,p,d,2)\right)^{2\cdot g_b(n-1,n_j-1,p,d,d-3,2)}\nonumber\\
&<\left(1+4^ddn^{\frac{-1}{2d^2}}\right)(1-p^d)^{2\left(1-4^{d-1}dn^{\frac{-1}{2d}-\frac{1}{2d^2}}\right)(n-n_j)^{d/2}n_j^{d/2-1}\left(1-2\cdot 4^{d-3}dn^{\frac{-1}{2d}-\frac{1}{2d^2}}\right)}\nonumber\\
&<\left(1+4^ddn^{\frac{-1}{2d^2}}\right)(1-p^d)^{2\left(1-2\cdot 4^{d-1}dn^{\frac{-1}{2d}-\frac{1}{2d^2}}\right)(n-n_j)^{d/2}n_j^{d/2-1}}\nonumber\\
&<\left(1+4^ddn^{\frac{-1}{2d^2}}\right)^3(1-p^d)^{2(n-n_j)^{d/2}n_j^{d/2-1}}\label{eqn17}
\end{align}
and
\begin{align}
&\quad h(n,p,d,2)^2\left(1-f(n,p,d,1)\right)^{g_b'(n_2,n_1,p,d,d-3,1,1)+g_b'(n_1,n_2,p,d,d-3,1,1)}\nonumber\\
&<\left(1+4^ddn^{\frac{-1}{2d^2}}\right)^2(1-p^d)^{2\left(1-4^{d-1}dn^{\frac{-1}{2d}-\frac{1}{2d^2}}\right)\left(n_1^{d/2}n_2^{d/2-1}+n_1^{d/2-1}n_2^{d/2}\right)\left(1-4^{d-3}dn^{\frac{-1}{2d}-\frac{1}{2d^2}}\right)}\nonumber\\
&<\left(1+4^ddn^{\frac{-1}{2d^2}}\right)^4(1-p^d)^{n_1^{d/2}n_2^{d/2-1}+n_1^{d/2-1}n_2^{d/2}}.\label{eqn21}
\end{align}
Substituting \eqref{eqn16}, \eqref{eqn20}, \eqref{eqn17}, and \eqref{eqn21} into the upper bound in Theorem \ref{bigthmbipartite}, we obtain
\begin{align*}
P(G(n_1,n_2,p),d)&<\left(1+4^ddn^{\frac{-1}{2d^2}}\right)^5-1\\
&\quad+\frac{2(1-p^d)^{-n_1^{d/2}n_2^{d/2-1}}\left(1+\frac{384n^{\frac{-1}{2d^2}}}{383}\right)}{n_2(n_2-1)}\left(1+\frac{n_1(n_1-1)(1-p^d)^{(n_1n_2)^{d/2-1}(n_2-n_1)}}{n_2(n_2-1)}\right)^{-1}\\
&<\frac{2(1-p^d)^{-n_1^{d/2}n_2^{d/2-1}}\left(1+\frac{384n^{\frac{-1}{2d^2}}}{383}\right)}{n_2(n_2-1)}\left(1+\frac{n_1(n_1-1)(1-p^d)^{(n_1n_2)^{d/2-1}(n_2-n_1)}}{n_2(n_2-1)}\right)^{-1}\\
&\quad+4^{d+3}dn^{\frac{-1}{2d^2}}.
\end{align*}
Suppose $d$ is odd. Similarly to how we derived \eqref{eqn13}, we derive
\begin{align}
g_b(n,n_j,p,d,d-3,1)&>\left(n_2-\frac{(4np)^{d-3}}{1-\frac{1}{16n^2p^2}}\right)^{\frac{d-1}{2}}\left(n_1-1-\frac{(4np)^{d-4}}{1-\frac{1}{16n^2p^2}}\right)^{(d-1)/2}\nonumber\\
&>(n_1n_2)^{\frac{d-1}{2}}\left(1-4^{d-3}dn^{\frac{-1}{2d}-\frac{1}{2d^2}}\right)\label{eqn22}
\end{align}
and
\begin{align}
g_b(n-1,n_1-1,p,d,d-3,2)&>\left(n_2-\frac{2(4np)^{d-3}}{1-\frac{1}{16n^2p^2}}\right)^{\frac{d-1}{2}}\left(n_1-2-\frac{2(4np)^{d-4}}{1-\frac{1}{16n^2p^2}}\right)^{\frac{d-1}{2}}\nonumber\\
&>(n_1n_2)^{\frac{d-1}{2}}\left(1-2\cdot 4^{d-3}dn^{\frac{-1}{2d}-\frac{1}{2d^2}}\right)\label{eqn23}.
\end{align}
Substituting in \eqref{eqn1}, \eqref{eqn3}, and \eqref{eqn22} into the lower bound in Theorem \ref{bigthmbipartite}, we obtain
\begin{align*}
P(G(n_1,n_2,p),d)&>1-n_1n_2\left(1+4^ddn^{\frac{-1}{2d^2}}\right)(1-p^d)^{(n_1n_2)^{\frac{d-1}{2}}\left(1-4^{d-1}dn^{\frac{-1}{2d}-\frac{1}{2d^2}}\right)\left(1-4^{d-3}dn^{\frac{-1}{2d}-\frac{1}{2d^2}}\right)}\\
&>1-n_1n_2\left(1+4^{d+1}dn^{\frac{-1}{2d^2}}\right)(1-p^d)^{(n_1n_2)^{\frac{d-1}{2}}}.
\end{align*}
Similarly to how we derived \eqref{eqn16}, \eqref{eqn20}, and \eqref{eqn17} we also have
\begin{align}
&\quad(1-p^d)^{-2\left((n_1n_2)^{\frac{d-1}{2}}+\sum_{l=1}^{\frac{d-1}{2}}(n_1n_2)^{\frac{d-1}{2}-l}\left(p^{1-2l}+p^{-2l}\right)\right)}\nonumber\\
&<(1-p^d)^{-2(n_1n_2)^{\frac{d-1}{2}}}\left(1+\frac{192n^{\frac{-1}{2d^2}}}{191}\right)\label{eqn24},
\end{align}
\begin{align}
&\quad(1-p^d)^{-\left((n_1n_2)^{\frac{d-1}{2}}+\sum_{l=1}^{\frac{d-1}{2}}(n_1n_2)^{\frac{d-1}{2}-l}\left(p^{1-2l}+p^{-2l}\right)\right)}\nonumber\\
&<(1-p^d)^{-(n_1n_2)^{\frac{d-1}{2}}}\left(1+\frac{384n^{\frac{-1}{2d^2}}}{383}\right)\label{eqn25},
\end{align}
and
\begin{align}
&\quad h(n-1,p,d,2)\left(1-f(n-1,p,d,2)\right)^{2\cdot g_b(n-1,n_j-1,p,d,d-3,2)}\nonumber\\
&<\left(1+4^ddn^{\frac{-1}{2d^2}}\right)^3(1-p^d)^{2(n_1n_2)^{\frac{d-1}{2}}}\label{eqn26}.
\end{align}
Substituting \eqref{eqn24}, \eqref{eqn25}, and \eqref{eqn26} into the upper bound in Theorem \ref{bigthmbipartite}, we obtain
\begin{align*}
P(G(n_1,n_2,p),d)&<\left(1+4^ddn^{\frac{-1}{2d^2}}\right)^4-1+\frac{(1-p^d)^{-(n_1n_2)^{\frac{d-1}{2}}}\left(1+\frac{384n^{\frac{-1}{2d^2}}}{383}\right)}{n_1n_2}\\
&<\frac{(1-p^d)^{-(n_1n_2)^{\frac{d-1}{2}}}\left(1+2n^{\frac{-1}{2d^2}}\right)}{n_1n_2}+4^{d+2}dn^{\frac{-1}{2d^2}}.
\end{align*}
\section{Directed Bipartite Graphs for diameter $d\geq 3$}
Using the above methods, we can obtain similar results about the probability of a random directed bipartite graph with $n_1$ and $n_2$ vertices in the partite sets having diameter $d$ where each directed edge is chosen independently with probability $p$. Furthermore, for any two vertices, say $v_1$ and $v_2$, the existence of the edge from $v_1$ to $v_2$ has probability $p$, while the existence of the edge from $v_2$ to $v_1$ also occurs with probability $p$, and these two edges occur independently. We proceed exactly as above the only changes being as follows. We multiply the second term in \eqref{eqn10c} by $s^{i_0n_j}$, multiply the right-hand side of \eqref{eqn10a} by $s^{i_0n_j}$, multiply the second term in \eqref{eqn10d} by $s^{i_0(n-n_j)}$, multiply the right-hand side of \eqref{eqn10b} by $s^{i_0(n-n_j)}$, replace the factor of $s^{i_1i_0'}$ with $s^{i_1i_0'+(i_0+i_0')(n-n_j)}$ in \eqref{eqn27}, replace the factor of $s^{i_1i_0'+i_0'}$ with $s^{i_1i_0'+i_0'+(i_0+i_0')n_j}$ in \eqref{eqn28}, replace $(s-r)^{i_0-l_1+i_0'-l_2+l_1i_0'+l_2i_0-l_1l_2}$ and $s^{i_0i_0'-l_1i_0'-l_2i_0+l_1l_2}$ with $(s-r)^{i_0-l_1+i_0'-l_2+l_1i_0'+l_2i_0}$ and $s^{i_0(n_2-l_2)+i_0'(n_1-l_1)}$ respectively in \eqref{eqn29}, replace $s^{i_0i_0'}$ with $s^{i_0'n_2+i_0n_1}$ in \eqref{eqn30}, and replace $n_1n_2$, $\binom{n_1}{2}$, $\binom{n_2}{2}$, and $\binom{n_j}{2}$ wherever they occur with $2n_1n_2$, $n_1(n_1-1)$, $n_2(n_2-1)$, and $n_j(n_j-1)$ respectively. The only other extra consideration is in our calculation for $n(b_1,b_2)$ where $d$ is odd and we may have both pairs of vertices each consisting of a vertex from each of the partite sets, but where the paths concerned start at vertices in opposite partite sets. To deal with this case, we would define $C_b'''\left(n,n_j,r,s,d+1,i_0,i_0'\right)$, which we define the same way as the function $C_b'\left(n,n_j,r,s,d+1,i_0,i_0'\right)$, except we consider directed paths from the $i_0$ vertices to vertex $v$, and directed paths from the vertex $v'$ to the $i_0'$ vertices and this case can be dealt with in exactly the same way as $C_b'\left(n,n_j,r,s,d+1,i_0,i_0'\right)$. Consequently, in Theorem \ref{bigthmbipartite} and Corollary \ref{bigcor2}, we multiply the second term of the lower bounds by $2$, divide the last term of the upper bounds in Theorem \ref{bigthmbipartite} by $2$, divide the first upper bound in Corollary \ref{bigcor2} by $2$ and divide the the first term in the second upper bound in Corollary \ref{bigcor2} by $2$ to get the analogous results for random directed bipartite graphs. Everything else is left unchanged.
\section{Acknowledgements}
The results in the paper are part of the Ph.D. thesis of the second author. He would like to thank his co-supervisor, Kevin Hare, and his thesis committee members, Karl Dilcher, David McKinnon, Jeffrey Shallit, and Cam Stewart for their helpful suggestions about this project. He would also like to thank the Azrieli Foundation for the award of an Azrieli International Postdoctoral Fellowship, as well as the University of Calgary also for the award of a Postdoctoral Fellowship.

\end{document}